\documentclass[envcountsame]{svmult}

\usepackage{amssymb}
\usepackage{amsmath}
\usepackage[matrix,arrow,curve,cmtip]{xy}
\usepackage{mathrsfs}

\usepackage{mathptmx}       
\usepackage{helvet}         
\usepackage{courier}        
\usepackage{type1cm}        

\usepackage{makeidx}         
\usepackage{graphicx}        
\usepackage{multicol}        
\usepackage[bottom]{footmisc}

\smartqed

\begin{document}

\title*{On the Whitehead spectrum of the circle}
\titlerunning{The Whitehead spectrum of the circle}
\author{Lars Hesselholt\thanks{Supported in part by a grant from the
    National Science Foundation}}
\authorrunning{Lars Hesselholt}
\institute{
Massachusetts Institute of Technology, Cambridge,
  Massachusetts \email{larsh@math.mit.edu} \and \at
Nagoya University, Nagoya, Japan
\email{larsh@math.nagoya-u.ac.jp}}


\maketitle

\abstract{The seminal work of Waldhausen, Farrell and Jones, Igusa,
and Weiss and Williams shows that the homotopy groups in low degrees
of the space of homeomorphisms of a closed Riemannian manifold of
negative sectional curvature can be expressed as a functor of the
fundamental group of the manifold. To determine this functor, however,
it remains to determine the homotopy groups of the topological
Whitehead spectrum of the circle. The cyclotomic trace of
B\"{o}kstedt, Hsiang, and Madsen and a theorem of Dundas, in turn, lead
to an expression for these homotopy groups in terms of the equivariant
homotopy groups of the homotopy fiber of the map from the topological
Hochschild $\mathbb{T}$-spectrum of the sphere spectrum to that of the
ring of integers induced by the Hurewicz map. We evaluate the latter
homotopy groups, and hence, the homotopy groups of the topological
Whitehead spectrum of the circle in low degrees. The result extends
earlier work by Anderson and Hsiang and by Igusa and complements recent
work by Grunewald, Klein, and Macko.}

\section*{Introduction}\label{intro}

Let $M$ be a closed smooth manifold of dimension $m \geqslant 5$.
Then, the stability theorem of Igusa~\cite{igusa} and a theorem of
Weiss and Williams~\cite[Thm.~A]{weisswilliams} show that, for all
integers $q$ less both $(m-4)/3$ and $(m-7)/2$, there is a long-exact
sequence
$$\dots \to \mathbb{H}_{q+2}(C_2,\tau_{\geqslant
  2}\operatorname{Wh}^{\text{Top}}(M)) \to
\pi_q(\operatorname{Homeo}(M)) \to
\pi_q(\widetilde{\operatorname{Homeo}}(M)) \to \dots$$
where the middle group is the $q$th homotopy group of the space of
homeomorphisms of $M$. In particular, the group
$\pi_0(\operatorname{Homeo}(M))$ is the mapping class group of
$M$. The right-hand term is the $q$th homotopy group of the space of
block homeomorphisms of $M$ and is the subject of surgery theory.
The left-hand term is the $(q+2)$th homotopy group of the Borel quotient
of the $2$-connective cover of the topological Whitehead spectrum of
$M$ by the canonical involution. It is one of the great past
achievements that the left-hand term can be expressed by Waldhausen's
algebraic $K$-theory of spaces~\cite{waldhausen, waldhausenjahrenrognes,
  vogell}. 

Suppose, in addition, that $M$ carries a Riemannian metric of
negative, but not necessarily constant, sectional curvature. Another
great achievement is the topological rigidity
theorems~\cite[Rem.~1.10,~Thm.~2.6]{farrelljones} of Farrell and Jones
which, in this case, give considerable simplifications of the left and
right-hand terms in the above sequence. For the right-hand term, there
are canonical isomorphisms
$$\pi_q(\widetilde{\operatorname{Homeo}}(M)) \xrightarrow{\sim} 
\pi_q(\widetilde{\operatorname{HoAut}}(M)) \xleftarrow{\sim}
\pi_q(\operatorname{HoAut}(M)),$$
where $\operatorname{HoAut}(M)$ and
$\widetilde{\operatorname{HoAut}}(M)$ are the spaces of self-homotopy
equivalences and block self-homotopy equivalences of $M$,
respectively. We note that, as $M$ is aspherical with $\pi_1(M)$
centerless~\cite[Thms.~22,~24]{petersen}, it follows
from~\cite[Thm.~III.2]{gottlieb} that the canonical map from
$\operatorname{HoAut}(M)$ to the discrete group
$\operatorname{Out}(\pi_1(M))$ is a weak equivalence. For the
left-hand term, there is a  canonical isomorphism
$$\bigoplus_{(C)} \operatorname{Wh}_q^{\text{Top}}(S^1)
\xrightarrow{\sim} \operatorname{Wh}_q^{\text{Top}}(M),$$
where the sum ranges over the set of conjugacy classes of maximal
cyclic subgroups of the torsion-free group $\pi_1(M)$; see
also~\cite[Thm.~139]{lueckreich}. Hence, in order to evaluate the
groups $\pi_q(\operatorname{Homeo}(M))$, it remains to evaluate
$$\operatorname{Wh}_q^{\text{Top}}(S^1) =
\pi_q(\operatorname{Wh}^{\text{Top}}(S^1))$$
and the canonical involution on these groups. We prove the
following result.

\begin{theorem}\label{main}The groups
  $\operatorname{Wh}_0^{\operatorname{Top}}(S^1)$ and 
$\operatorname{Wh}_1^{\operatorname{Top}}(S^1)$ are zero. Moreover,
there are canonical isomorphisms
$$\begin{aligned}
\operatorname{Wh}_2^{\operatorname{Top}}(S^1) & \xrightarrow{\sim}
\bigoplus_{r \geqslant 1} \bigoplus_{j \in \mathbb{Z} \smallsetminus
  2\mathbb{Z}} \mathbb{Z}/2\mathbb{Z} \cr
\operatorname{Wh}_3^{\operatorname{Top}}(S^1) & \xrightarrow{\sim}
\bigoplus_{r \geqslant 0} \bigoplus_{j \in \mathbb{Z} \smallsetminus
  2\mathbb{Z}} \mathbb{Z}/2\mathbb{Z} \; \oplus \;
\bigoplus_{r \geqslant 1} \bigoplus_{j \in \mathbb{Z} \smallsetminus
  2\mathbb{Z}} \mathbb{Z}/2\mathbb{Z}. \cr
\end{aligned}$$
\end{theorem}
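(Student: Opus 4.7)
The idea is to replace $K$-theory by topological cyclic homology via the cyclotomic trace, as indicated in the introduction. Since $S^1 = B\mathbb{Z}$, Waldhausen's identification of $A(S^1)$ together with the classical vanishing of $\operatorname{Wh}(\mathbb{Z})$ reduce the calculation to a relative term associated with the linearization map $\ell \colon \mathbb{S}[\mathbb{Z}] \to H\mathbb{Z}[\mathbb{Z}]$ of sphere and integral group rings. By Dundas's theorem, the cyclotomic trace induces an equivalence on the relative $K$-theory of $\ell$ and on the relative $\operatorname{TC}$ of $\ell$, so the task becomes the computation of the homotopy fiber of $\operatorname{TC}(\ell)$ in degrees $0$ through $3$.

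The second step is to unfold $\operatorname{THH}$ of the group rings via the cyclic bar construction. One has $\operatorname{THH}(R[\mathbb{Z}]) \simeq \operatorname{THH}(R) \wedge B^{\operatorname{cyc}}\mathbb{Z}_+$ as $\mathbb{T}$-spectra, and the decomposition $B^{\operatorname{cyc}}\mathbb{Z} \simeq \bigsqcup_{r \in \mathbb{Z}} \mathbb{T}/C_{|r|}$ (with its natural $\mathbb{T}$-action) splits the relative $\operatorname{THH}$ into a wedge of copies of $F = \operatorname{fib}(\operatorname{THH}(\mathbb{S}) \to \operatorname{THH}(H\mathbb{Z}))$ smashed with $(\mathbb{T}/C_r)_+$, one for each integer $r$. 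Taking $C_n$-fixed points and assembling via the Frobenius and restriction maps then expresses the relative $\operatorname{TC}$ as a homotopy limit built entirely from the equivariant homotopy of the single $\mathbb{T}$-spectrum $F$ together with the orbits $\mathbb{T}/C_r$.

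The core of the proof, and the principal obstacle, is then the evaluation of the $\mathbb{T}$-equivariant homotopy groups of $F$ in low degrees. Bökstedt's theorem gives $\operatorname{THH}_q(\mathbb{Z}) = \mathbb{Z}$ in degree $0$ and $\mathbb{Z}/i$ in degree $2i-1$, while $\operatorname{THH}_q(\mathbb{S}) = \pi_q^s$; since the Hurewicz map $\mathbb{S} \to H\mathbb{Z}$ is a sufficiently connected map, the long exact sequence pins down $\pi_*F$ in the range needed, with $2$-primary generators coming from $\eta$ and $\eta^2$. Using the Connes operator to track the $\mathbb{T}$-action and the fundamental cofibration sequences for $C_{p^n}$-fixed points, I would compute the $C_r$-equivariant groups of $F$ and the behaviour of the Frobenius $\varphi_p \colon F^{C_{pr}} \to F^{C_r}$. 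Assembling these contributions over $r$ produces the direct sums indexed by $r \geqslant 1$ or $r \geqslant 0$ in the statement, with the inner index $j \in \mathbb{Z} \smallsetminus 2\mathbb{Z}$ recording the odd-degree stable-stem classes that generate the $\mathbb{Z}/2$-summands in degrees $2$ and $3$. The delicate point is the simultaneous bookkeeping of Frobenius across all $r$, which is what forces the vanishing in degrees $0$ and $1$ and determines the precise form of the answer in degrees $2$ and $3$.
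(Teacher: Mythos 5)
Your top-level reduction is the paper's: pass to the cyclotomic trace, invoke Dundas to replace relative $K$ by relative $\operatorname{TC}$ after $2$-completion, and reduce to the equivariant homotopy of the fibre of $T(\mathbb{S})\to T(\mathbb{Z})$; your cyclic-bar decomposition of $B^{\operatorname{cyc}}\mathbb{Z}$ is exactly the mechanism that underlies the paper's ``fundamental theorem'' from~\cite{hm3}, which packages the same data into an explicit formula for $\operatorname{TR}_q^n(A[x^{\pm 1}];p)$ together with $R$, $F$, $V$, $d$. So the route is the right one. The problem is that the proposal stops where the work actually begins. The sentence ``Using the Connes operator to track the $\mathbb{T}$-action and the fundamental cofibration sequences for $C_{p^n}$-fixed points, I would compute the $C_r$-equivariant groups of $F$'' is a placeholder for three sections of the paper. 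That computation is not routine: one needs the skeleton spectral sequence with an identification of its $d^2$ (via Connes' operator and $\eta$) \emph{and} of its $d^4$ (Mosher's theorem on stable homotopy of complex projective space), the Segal--tom~Dieck splitting of $\operatorname{TR}^n(\mathbb{S})$, Rognes' Tate-spectral-sequence computations for $\operatorname{TR}^n(\mathbb{Z};2)$ (in particular the vanishing of the even-degree groups and the order of $\lambda$ and $\kappa$), Costeanu's identification of $\ell(\eta)$, and B\"okstedt's lemma determining the image of $K_5(\mathbb{S})\to K_5(\mathbb{Z})$. None of this is acknowledged, and nothing in the plan gives evidence that these groups --- and, more importantly, the interaction of $R$, $F$, $V$, $d$ on them --- can be controlled by the tools you name.

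There is also a concrete misreading of where the answer's indices come from. You describe $j\in\mathbb{Z}\smallsetminus 2\mathbb{Z}$ as ``recording the odd-degree stable-stem classes,'' but $j$ is the odd winding number (equivalently the exponent of $x$) in the cyclic-bar/Laurent decomposition, and the outer index $r$ records the Verschiebung filtration level at which the relevant generator ($V^r(\tilde\eta)$, $V^r(\eta\tilde\eta)$, $dV^r(\tilde\eta)$) sits in $\operatorname{TR}^m(\mathbb{S},I;2)$. The stable-stem input ($\eta$, $\eta^2$, $\nu$) supplies a fixed finite set of generators in each degree and does not account for the infinite index $j$. A correct execution of your plan would therefore have to produce the explicit formulas of Corollary~\ref{kernelR-F} relating the $b_{1,j}^{(n)}$ to all the other coefficients, and then solve the resulting system to show that only the $V^r(\tilde\eta)$-components (for $q=2$) and the $V^r(\eta\tilde\eta)$- and $dV^r(\tilde\eta)$-components (for $q=3$) survive in $\ker(1-F)$, and separately verify surjectivity of $1-F$ one degree higher; that bookkeeping is the content of the proofs of Theorems~\ref{whiteheadtwo} and~\ref{whiteheadthree} and is entirely absent from the proposal.
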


The statement for $q = 0$ and $q = 1$ was proved earlier by Anderson
and Hsiang~\cite{andersonhsiang} by different methods. It was also
known by work of Igusa~\cite{igusa1} that the two sides of the statement
for $q = 2$ are abstractly isomorphic. The statement for $q = 3$ is
new. We also note that in recent work, Grunewald, Klein, and
Macko~\cite{grunewaldkleinmacko} have proved that for $p$ an odd prime
and $q \leqslant 4p-7$, the $p$-primary torsion subgroup of
$\operatorname{Wh}_q^{\operatorname{Top}}(S^1)$ is a countably
dimensional $\mathbb{F}_p$-vector space, if $q = 2p-2$ or $2p-1$, and
zero, otherwise. Hence, we will here focus the attention on the
$2$-primary torsion subgroup.

We briefly outline the proof of Thm.~\ref{main}. The seminal work of
Waldhausen establishes a cofibration sequence of spectra
$$S^1_+ \wedge K(\mathbb{S}) \xrightarrow{\alpha}
K(\mathbb{S}[x^{\pm 1}]) \to 
\operatorname{Wh}^{\text{Top}}(S^1) \xrightarrow{\partial}
\Sigma S^1_+ \wedge K(\mathbb{S}),$$
which identifies the topological Whitehead spectrum of the circle as
the mapping cone of the assembly map in algebraic
$K$-theory~\cite[Thm.~3.3.3]{waldhausen},~\cite[Thm.~0.1]{waldhausenjahrenrognes}. Here $\mathbb{S}$ is the  
sphere spectrum and $\mathbb{S}[x^{\pm 1}]$ is the Laurent polynomial
extension. If we replace the sphere spectrum by the ring of integers,
the assembly map
$$\alpha \colon S^1_+ \wedge K(\mathbb{Z}) \to
K(\mathbb{Z}[x^{\pm 1}])$$
becomes a weak equivalence by the fundamental theorem of algebraic
$K$-theory~\cite[Thm.~8, Cor.]{quillen}. Hence, we obtain a
cofibration sequence of spectra
$$S^1_+ \wedge K(\mathbb{S},I) \xrightarrow{\alpha}
K(\mathbb{S}[x^{\pm 1}],I[x^{\pm 1}]) \to 
\operatorname{Wh}^{\text{Top}}(S^1) \xrightarrow{\partial}
\Sigma S^1_+ \wedge K(\mathbb{S},I),$$
where the spectra $K(\mathbb{S},I)$ and
$K(\mathbb{S}[x^{\pm1}],I[x^{\pm 1}])$ are defined to be the homotopy
fibers of the maps of $K$-theory spectra induced by the Hurewicz maps
$\ell \colon \mathbb{S} \to \mathbb{Z}$ and $\ell \colon
\mathbb{S}[x^{\pm1}] \to \mathbb{Z}[x^{\pm1}]$, respectively. The
Hurewicz  maps are rational equivalences, as was proved by Serre. This
implies that $K(\mathbb{S},I)$ and
$K(\mathbb{S}[x^{\pm1}],I[x^{\pm1}])$ are rationally trivial
spectra. It follows that, for all integers $q$, 
$$\operatorname{Wh}_q^{\text{Top}}(S^1) \otimes \mathbb{Q} = 0.$$
Therefore, it suffices to evaluate, for every prime number $p$, the
homotopy groups with $p$-adic coefficients,
$$\operatorname{Wh}_q^{\text{Top}}(S^1;\mathbb{Z}_p) =
\pi_q(\operatorname{Wh}^{\text{Top}}(S^1)_p),$$
that are defined to be the homotopy groups of the
$p$-completion~\cite{bousfield}.

The cyclotomic trace map of
B\"{o}kstedt, Hsiang, and Madsen~\cite{bokstedthsiangmadsen} induces a
map
$$\operatorname{tr} \colon K(\mathbb{S},I) \to
\operatorname{TC}(\mathbb{S},I;p)$$
from the relative $K$-theory spectrum to the relative topological
cyclic homology spectrum. It was proved by Dundas~\cite{dundas} that
this map becomes a weak equivalence after $p$-completion. The same is true
for the Laurent polynomial extension. Hence, we have a cofibration
sequence of implicitly $p$-completed spectra
$$S^1_+ \wedge \operatorname{TC}(\mathbb{S},I;p)
\xrightarrow{\alpha}
\operatorname{TC}(\mathbb{S}[x^{\pm 1}],I[x^{\pm 1}];p) \to
\operatorname{Wh}^{\text{Top}}(S^1) \xrightarrow{\partial}
\Sigma S^1_+ \wedge \operatorname{TC}(\mathbb{S},I;p).$$
There is also a `fundamental theorem' for topological cyclic homology
which was proved by Madsen and the author in~\cite[Thm.~C]{hm3}. If
$A$ is a symmetric ring spectrum whose homotopy groups are
$\mathbb{Z}_{(p)}$-modules, this theorem expresses, up to an
extension, the topological cyclic homology groups 
$\smash{ \operatorname{TC}_q(A[x^{\pm1}];p) }$ of the Laurent
polynomial extension in terms of the equivariant homotopy groups
$$\operatorname{TR}_q^n(A;p) = [S^q \wedge
(\mathbb{T}/C_{p^{n-1}})_+, T(A)]_{\mathbb{T}}$$
of the topological Hochschild $\mathbb{T}$-spectrum $T(A)$ and the
maps
$$\begin{aligned}
R \colon & \operatorname{TR}_q^n(A;p) \to
\operatorname{TR}_q^{n-1}(A;p)
\hskip7mm \text{(restriction)} \hfill\space \cr
F \colon & \operatorname{TR}_q^n(A;p) \to
\operatorname{TR}_q^{n-1}(A;p)
\hskip7mm \text{(Frobenius)} \hfill\space \cr
V \colon & \operatorname{TR}_q^{n-1}(A;p) \to
\operatorname{TR}_q^n(A;p) 
\hskip7mm \text{(Verschiebung)} \hfill\space \cr
d \colon & \operatorname{TR}_q^n(A;p) \to
\operatorname{TR}_{q+1}^n(A;p) 
\hskip7mm \text{(Connes' operator)} \hfill\space \cr
\end{aligned}$$
which relate these groups. Here $\mathbb{T}$ is the multiplicative
group of complex numbers of modulus $1$, and $C_{p^{n-1}} \subset
\mathbb{T}$ is the subgroup of the indicated order. We recall the
groups $\operatorname{TR}_q^n(A;p)$ in Sect.~\ref{TRintro} and give a
detailed discussion of the fundamental theorem in
Sect.~\ref{fundamentaltheoremsection}. In the following
Sects.~\ref{TCintro} and~\ref{skeletonspectralsequence}, we briefly
recall the cyclotomic trace map and the skeleton spectral sequence
which we use extensively in later sections. A minor novelty here is
Prop.~\ref{fundamentalcofibrationsequence} which generalizes of the
fundamental long-exact sequence~\cite[Thm.~2.2]{hm} to a long-exact
sequence
$$\cdots \to \mathbb{H}_q(C_{p^m}, \mathit{TR}^{\,n}(A;p)) \to
\operatorname{TR}_q^{m+n}(A;p) \xrightarrow{R^n}
\operatorname{TR}_q^m(A;p) \to \cdots$$
valid for all positive integers $m$ and $n$.

The problem to evaluate
$\operatorname{Wh}_q^{\operatorname{Top}}(S^1)$ is thus reduced to the
homotopy theoretical problem of evaluating the equivariant homotopy
groups $\operatorname{TR}_q^n(\mathbb{S},I;p)$ along with the maps
listed above. In the paper~\cite{grunewaldkleinmacko} mentioned
earlier, the authors approximate the Hurewicz map $\ell \colon
\mathbb{S} \to \mathbb{Z}$ by a map of suspension spectra $\theta
\colon \mathbb{S}[SG] \to \mathbb{S}$ and use the Segal-tom Dieck
splitting to essentially evaluate the groups
$\operatorname{TR}_q^n(\mathbb{S},I;p)$, for $p$ odd and
$q \leqslant 4p-7$. However, this approach is not available, for
$q > 4p-7$, where a genuine understanding of the domain and target of
the map
$$\operatorname{TR}_q^n(\mathbb{S};p) \to
\operatorname{TR}_q^n(\mathbb{Z};p)$$
appears necessary. We evaluate
$\operatorname{TR}_q^n(\mathbb{S},I;2)$, for $q \leqslant 3$, and we
partly evaluate the four maps listed above. The result, which is
Thm.~\ref{TRrelative} below, is the main result of the paper, and the
proof occupies Sects.~\ref{spheresection}--\ref{relativesection}. The
homotopy  theoretical methods we employ here are perhaps somewhat
simple-minded and more sophisticated methods will certainly make it
possible to evaluate the groups
$\operatorname{TR}_q^n(\mathbb{S},I;p)$ in a wider range of
degrees. In particular, it would be very interesting to understand the
corresponding homology groups. However, to evaluate the groups
$\operatorname{TR}_q^n(\mathbb{S},I;p)$ is at least as difficult as to
evaluate the stable homotopy groups of spheres. In the final 
Sect.~\ref{algebrasection}, we apply the fundamental theorem to 
the result of Thm.~\ref{TRrelative} and prove Thm.~\ref{main}.

This paper was written in part during a visit to Aarhus University. It
is a pleasure to thank the university and Ib Madsen, in particular,
for their hospitality and support. Finally, the author would like to
express his gratitude to Marcel B\"{o}kstedt for the proof of
Lemma~\ref{bokstedtlemma} below. 

\section{The groups $\operatorname{TR}_q^n(A;p)$}\label{TRintro}

Let $A$ be a symmetric ring
spectrum~\cite[Sect.~5.3]{hoveyshipleysmith}. The topological
Hochschild $\mathbb{T}$-spectrum $T(A)$ is a cyclotomic spectrum in
the sense of~\cite[Def.~2.2]{hm}. In particular, it is an object of the
$\mathbb{T}$-stable homotopy category. Let $C_r \subset \mathbb{T}$ be
the subgroup of order $r$, and let $(\mathbb{T}/C_r)_+$ be the
suspension $\mathbb{T}$-spectrum of the union of $\mathbb{T}/C_r$ and
a disjoint basepoint. One defines the equivariant homotopy group
$$\operatorname{TR}_q^n(A;p) = [ S^q \wedge
(\mathbb{T}/C_{p^{n-1}})_+, T(A) ]_{\mathbb{T}}.$$
to be the abelian group of maps in the $\mathbb{T}$-stable homotopy
category between the indicated $\mathbb{T}$-spectra. The Frobenius
map, Verschiebung map, and Connes' operator, which we mentioned in the
Introduction, are induced by maps
$$\begin{aligned}
f \colon & (\mathbb{T}/C_{p^{n-2}})_+ \to (\mathbb{T}/C_{p^{n-1}})_+ \cr
v \colon & (\mathbb{T}/C_{p^{n-1}})_+ \to (\mathbb{T}/C_{p^{n-2}})_+ \cr
\delta \colon & \Sigma (\mathbb{T}/C_{p^{n-1}})_+ \to
(\mathbb{T}/C_{p^{n-1}})_+ \cr
\end{aligned}$$
in the $\mathbb{T}$-stable homotopy category defined as
follows. The map $f$ is the map of suspension $\mathbb{T}$-spectra
induced by the canonical projection  $\operatorname{pr} \colon
\mathbb{T}/C_{p^{n-2}} \to \mathbb{T}/C_{p^{n-1}}$, and the map $v$ is
the corresponding transfer map. To define the latter, we choose an
embedding $\iota \colon \mathbb{T}/C_{p^{n-2}} \hookrightarrow
\lambda$ into a finite dimensional orthogonal
$\mathbb{T}$-presentation. The product embedding
$(\iota,\operatorname{pr}) \colon \mathbb{T}/C_{p^{n-2}} \to \lambda
\times \mathbb{T}/C_{p^{n-1}}$ has trivial normal bundle, and the
linear structure of $\lambda$ determines a preferred
trivialization. Hence, the Pontryagin-Thom construction gives a map of
pointed $\mathbb{T}$-spaces
$$S^{\lambda} \wedge (\mathbb{T}/C_{p^{n-1}})_+ \to S^{\lambda} \wedge
(\mathbb{T}/C_{p^{n-2}})_+$$
and $v$ is the induced map of suspension
$\mathbb{T}$-spectra. Finally, there is a unique homotopy class of
maps of pointed spaces $\delta'' \colon S^1 \to
(\mathbb{T}/C_{p^{n-1}})_+$ such that image by the Hurewicz map is the
fundamental class $[\mathbb{T}/C_{p^{n-1}}]$ corresponding to the
counter-clockwise orientation of the circle $\mathbb{T} \subset
\mathbb{C}$ and such that the composite of $\delta''$ and the map
$(\mathbb{T}/C_{p^{n-1}})_+ \to S^0$ that collapses
$\mathbb{T}/C_{p^{n-1}}$ to the non-base point of $S^0$ is the
null-map. The map $\delta''$ induces the map of suspension
$\mathbb{T}/C_{p^{n-1}}$-spectra
$$\delta' \colon \Sigma (\mathbb{T}/C_{p^{n-1}})_+ \to
(\mathbb{T}/C_{p^{n-1}})_+$$
which, in turn, induces the map $\delta$.

The definition of the restriction map is more delicate. We let $E$ be
the unit sphere in $\mathbb{C}^{\infty}$ and consider the cofibration
sequence of pointed $\mathbb{T}$-spaces
$$E_+ \to S^0 \to \tilde{E} \to \Sigma E_+$$
where the left-hand map collapses $E$ onto the non-base point of
$S^0$; the $\mathbb{T}$-space $\tilde{E}$ is canonically homeomorphic
to the one-point compactification of $\mathbb{C}^{\infty}$. It induces
a cofibration sequence of $\mathbb{T}$-spectra
$$E_+ \wedge T(A) \to T(A) \to \tilde{E} \wedge T(A) \to
\Sigma E_+ \wedge T(A),$$
and hence, a long-exact sequence of equivariant homotopy
groups. By~\cite[Thm.~2.2]{hm}, the latter sequence is canonically
isomorphic to the sequence
$$\cdots \to \mathbb{H}_q(C_{p^{n-1}}, T(A)) \xrightarrow{N}
\operatorname{TR}_q^n(A;p) \xrightarrow{R}
\operatorname{TR}_q^{n-1}(A;p) \xrightarrow{\partial}
\mathbb{H}_{q-1}(C_{p^{n-1}}, T(A)) \to \cdots$$
which is called the fundamental long-exact sequence. The left-hand
term is the group homology of $C_{p^{n-1}}$ with coefficients in the
underlying $C_{p^{n-1}}$-spectrum of $T(A)$ and is defined to be the
equivariant homotopy group
$$\mathbb{H}_q(C_{p^{n-1}},T(A)) = [ S^q, E_+ \wedge T(A)
]_{C_{p^{n-1}}}.$$
The isomorphism of the left-hand terms in the two sequences is given
by the canonical change-of-groups isomorphism
$$[S^q, E_+ \wedge T(A)]_{C_{p^{n-1}}} \! \xrightarrow{\sim} \;
[S^q \wedge (\mathbb{T}/C_{p^{n-1}})_+, E_+ \wedge
T(A)]_{\mathbb{T}}$$
and the resulting map $N$ in the fundamental long-exact sequence is
called the norm map. The isomorphism of the right-hand terms in the
two sequences involves the cyclotomic structure of the spectrum $T(A)$
as we now explain. The $C_p$-fixed points of the $\mathbb{T}$-spectrum
$T(A)$ is a $\mathbb{T}/C_p$-spectrum $T(A)^{C_p}$. Moreover, the
isomorphism
$$\rho_p \colon \mathbb{T} \to \mathbb{T}/C_p$$
given by the $p$th root induces an equivalence of categories that to
the $\mathbb{T}/C_p$-spectrum $D$ associates the $\mathbb{T}$-spectrum
$\rho_p^*D$. Then the additional cyclotomic structure of the
topological Hochschild $\mathbb{T}$-spectrum $T(A)$ consists of a map
of $\mathbb{T}$-spectra
$$r \colon \rho_p^*((\tilde{E} \wedge T(A))^{C_p}) \to T(A)$$
with the property that the induced map of equivariant homotopy groups
$$[S^q \wedge (\mathbb{T}/C_{p^{n-1}})_+, \rho_p^*((\tilde{E} \wedge
T(A))^{C_p})]_{\mathbb{T}} \to 
[S^q \wedge (\mathbb{T}/C_{p^{n-1}})_+, T(A)]_{\mathbb{T}}$$
is an isomorphism for all positive integers $n$. The right-hand sides
of the two sequences above are now identified by the composition
$$\begin{aligned}
{} & [ S^q \wedge (\mathbb{T}/C_{p^{n-1}})_+, \tilde{E} \wedge T(A) ]_{\mathbb{T}} 
\xleftarrow{\sim}
[ S^q \wedge (\mathbb{T}/C_{p^{n-1}})_+, (\tilde{E} \wedge T(A))^{C_p} ]_{\mathbb{T}/C_p} \cr
{} & \xrightarrow{\sim} [ S^q \wedge (\mathbb{T}/C_{p^{n-2}})_+,
\rho_p^*((\tilde{E} \wedge T(A))^{C_p}) ]_{\mathbb{T}} \xrightarrow{\sim} 
[ S^q \wedge (\mathbb{T}/C_{p^{n-2}})_+, T(A) ]_{\mathbb{T}} \cr 
\end{aligned}$$
of the canonical isomorphism, the isomorphism $\rho_p^*$, and the
isomorphism induced by the map $r$. By definition, the restriction map
is the resulting map $R$ in the fundamental long-exact sequence. Since
$r$ is a map of $\mathbb{T}$-spectra, the restriction map commutes
with the Frobenius map, the Verschiebung map, and Connes' operator.

We mention that, if the symmetric ring spectrum $A$ is commutative,
then $T(A)$ has the structure of a commutative ring
$\mathbb{T}$-spectrum which, in turn, gives the graded abelian group
$\operatorname{TR}_*^n(A;p)$ the structure of an anti-symmetric graded
ring, for all $n \geqslant 1$. The restriction and Frobenius maps are
both ring homomorphisms, the Frobenius and Verschiebung maps satisfy
the projection formula
$$x V(y) = V(F(x)y),$$
and Connes' operator is a derivation with respect to the product. 

In general, the restriction map does not admit a section. However, if
$A = \mathbb{S}$ is the sphere spectrum, there exists a map
$$s \colon T(\mathbb{S}) \to \rho_p^*(T(\mathbb{S})^{C_p})$$
in the  $\mathbb{T}$-stable homotopy category such that the
composition
$$T(\mathbb{S}) \xrightarrow{s}
\rho_p^*(T(\mathbb{S})^{C_p}) \to
\rho_p^*((\tilde{E} \wedge T(\mathbb{S}))^{C_p}) \xrightarrow{r}
T(\mathbb{S})$$
is the identity map~\cite[Cor.~4.4.8]{madsen}. The map $s$ induces a
section
$$\hskip5mm S = S_n \colon \operatorname{TR}_q^{n-1}(\mathbb{S};p) \to
\operatorname{TR}_q^n(\mathbb{S};p) \hskip9.6mm
\text{(Segal-tom~Dieck splitting)}$$ 
of the restriction map. The section $S$ is a ring homomorphism and
commutes with the Verschiebung map and Connes' operator. The
composition $FS_n$ is equal to $S_{n-1}F$, for $n \geqslant 3$, and to
the identity map, for $n = 2$. It follows that, for every symmetric
ring spectrum $A$, the graded abelian group
$\operatorname{TR}_*^n(A;p)$ is a graded module over the graded ring
$\operatorname{TR}_*^1(\mathbb{S};p)$ which is canonically isomorphic
to the graded ring given by the stable homotopy groups of spheres. It
is proved in~\cite[Sect.~1]{h} that Connes' operator satisfies the
following additional relations 
$$\begin{aligned}
FdV & = d + (p-1)\eta, \cr
dd &  = d\eta = \eta d, \cr
\end{aligned}$$
where $\eta$ indicates multiplication by the Hopf class $\eta \in
\operatorname{TR}_1^1(\mathbb{S};p)$. It follows from the above that
$FV = p$, $dF = pFd$, and $Vd = pdV$.

The zeroth space $A_0$ of the symmetric spectrum $A$ is a pointed
monoid which is commutative if $A$ is commutative. There is a
canonical map 
$$[-]_n \colon \pi_0(A_0) \to \operatorname{TR}_0^n(A;p) 
\hskip7mm \text{(Teichm\"{u}ller map)}$$
which satisfies $R([a]_n) = [a]_{n-1}$ and $F([a]_n) =
[a^p]_{n-1}$; see~\cite[Sect.~2.5]{hm3}. If $A$ is commutative, the
Teichm\"{u}ller map is multiplicative and satisfies
$$Fd([a]_n) = [a]_{n-1}^{p-1}d([a]_{n-1}).$$

\section{The fundamental theorem}\label{fundamentaltheoremsection}

Let $A$ be a symmetric ring spectrum, and let $\Gamma$ be the free
group on a generator $x$. We define the symmetric ring spectrum
$A[x^{\pm 1}]$ to be the symmetric spectrum
$$A[x^{\pm 1}] = A \wedge \Gamma_+$$
with the multiplication map given by the composition of the canonical
isomorphism from $A \wedge \Gamma_+ \wedge A \wedge \Gamma_+$ to $A \wedge A 
\wedge \Gamma_+ \wedge \Gamma_+$ that permutes the second and third smash
factors and the smash product $\mu_A \wedge \mu_{\Gamma}$ of the
multiplication maps of $A$ and $\Gamma$ and with the unit map given by the
composition of the canonical isomorphism from $\mathbb{S}$ to
$\mathbb{S} \wedge S^0$ and the smash product $e_A \wedge e_{\Gamma}$ of the
unit maps of $A$ and $\Gamma$. There is a natural map of symmetric ring
spectra $f \colon A \to A[x^{\pm 1}]$ defined to be the composition of
the canonical isomorphism from $A$ to $A \wedge S^0$ and the smash
product $\operatorname{id}_A \wedge e_{\Gamma}$ of the identity map of $A$
and the unit map of $\Gamma$. It induces a natural map
$$f_* \colon \operatorname{TR}_q^n(A;p) \to
\operatorname{TR}_q^n(A[x^{\pm 1}];p).$$
Moreover, there is a map of symmetric ring spectra $g \colon
\mathbb{S}[x^{\pm 1}] \to A[x^{\pm 1}]$ defined to be the smash
product $e_A \wedge \operatorname{id}_{\Gamma}$ of the unit map of $A$ and
the identity map of $\Gamma$. The map $g$ makes $A[x^{\pm 1}]$ into an
algebra spectrum over the commutative symmetric ring spectrum
$\mathbb{S}[x^{\pm 1}]$. It follows that there is a natural pairing
$$\nu \colon \operatorname{TR}_q^n(A[x^{\pm 1}];p) \otimes
\operatorname{TR}_{q'}^n(\mathbb{S}[x^{\pm 1}];p)
\to \operatorname{TR}_{q+q'}^n(A[x^{\pm 1}];p)$$
which makes the graded abelian group $\operatorname{TR}_*^n(A[x^{\pm
1}];p)$ a graded module over the anti-symmetric graded ring
$\operatorname{TR}_*^n(\mathbb{S}[x^{\pm 1}];p)$. The element $[x]_n
\in \operatorname{TR}_0^n(\mathbb{S}[x^{\pm 1}];p)$ is a unit with
inverse $[x]_n^{-1} = [x^{-1}]_n$ and we define 
$$d \log [x]_n = [x]_n^{-1}d[x]_n \in
\operatorname{TR}_1^n(\mathbb{S}[x^{\pm 1}];p).$$
It follows from the general relations that
$$F(d\log [x]_n) = R(d\log [x]_n) = d\log [x]_{n-1}.$$
Now, given an integer $j$ and and element $a \in
\operatorname{TR}_q^n(A;p)$, we define
$$\begin{aligned}
a[x]_n^j & = \nu(f_*(a) \otimes [x]_n^j) \in
\operatorname{TR}_q^n(A[x^{\pm 1}];p) \cr
a[x]_n^jd\log[x]_n & = \nu(f_*(a) \otimes [x]_n^jd\log[x]_n) \in
\operatorname{TR}_{q+1}^n(A[x^{\pm 1}];p). \cr
\end{aligned}$$
The following theorem, which is similar to the fundamental theorem of
algebraic $K$-theory, was proved by Ib Madsen and the author
in~\cite[Thm.~C]{hm3}. The assumption in loc.~cit.~that the prime $p$
be odd is unnecessary; the same proof works for $p = 2$. However, the
formulas for $F$, $V$, and $d$ given in~loc.~cit.~are valid for odd
primes only. Below, we give a formula for the Frobenius which holds
for all primes $p$.

\begin{theorem}\label{fundamentaltheorem}Let $p$ be a prime number, and
let $A$ be a symmetric ring spectrum whose homotopy groups are
$\mathbb{Z}_{(p)}$-modules. Then every element $\omega \in
\operatorname{TR}_q^n(A[x^{\pm 1}];p)$ can be written uniquely as a
(finite) sum
$$\sum_{j \in \mathbb{Z}} \big( a_{0,j} [x]_n^j + b_{0,j} [x]_n^j
d\log [x]_n \big) + 
\sum_{ \substack{1 \leqslant s < n \\ j \in \mathbb{Z} \smallsetminus
  p\mathbb{Z}}} \big( V^s(a_{s,j} [x]_{n-s}^j) +
dV^s(b_{s,j} [x]_{n-s}^j) \big)$$
with $a_{s,j} = a_{s,j}(\omega) \in \operatorname{TR}_q^{n-s}(A;p)$ and
$b_{s,j} = b_{s,j}(\omega) \in
\operatorname{TR}_{q-1}^{n-s}(A;p)$. The corresponding statement for
the equivariant homotopy groups with $\mathbb{Z}_p$-coefficients is
valid for every symmetric ring spectrum $A$.
\end{theorem}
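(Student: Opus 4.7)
The plan is to revisit the proof of~\cite[Thm.~C]{hm3}, which establishes the existence and uniqueness of this decomposition for odd $p$, and to verify that every step remains valid at $p = 2$. Since the decomposition statement itself does not reference the Hopf class $\eta$, the only $2$-primary subtleties arise inside auxiliary Tate spectral sequence computations rather than in the formula being proved; the explicit formulas for $F$, $V$, and $d$ on the summands, which genuinely do acquire $2$-primary corrections, are not what the present theorem asserts.

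The first step is the identification
$$T(A[x^{\pm 1}]) \simeq T(A) \wedge N^{cy}(\Gamma)_+$$
of $\mathbb{T}$-spectra, where $N^{cy}(\Gamma)$ is the cyclic bar construction of the free group $\Gamma = \langle x \rangle \cong \mathbb{Z}$. The $\mathbb{T}$-space $N^{cy}(\Gamma)$ decomposes by winding number as $\bigsqcup_{j \in \mathbb{Z}} N^{cy}(\Gamma)_j$, and a careful analysis of the $\mathbb{T}$-equivariant homotopy type of each component produces the basis elements asserted in the theorem. The terms $a_{0,j}[x]_n^j$ and $b_{0,j}[x]_n^j d\log[x]_n$ come from the two $\mathbb{T}$-cells of each winding-number component, while the Verschiebung terms $V^s(a_{s,j}[x]_{n-s}^j)$ and $dV^s(b_{s,j}[x]_{n-s}^j)$ with $\gcd(j,p) = 1$ arise from the strata of $N^{cy}(\Gamma)_{p^s j}$ on which $C_{p^s} \subset \mathbb{T}$ acts nontrivially through its quotient; the constraint $s < n$ reflects that the $C_{p^{n-1}}$-fixed points see only isotropy subgroups contained in $C_{p^{n-1}}$.

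The existence of the decomposition and the uniqueness of the expansion coefficients are then assembled by induction on $n$, using the fundamental long-exact sequence recalled in Section~\ref{TRintro}. The inductive step exploits that the restriction map $R$ decreases the Verschiebung depth by one, so that the coefficients $a_{s,j}(\omega)$ and $b_{s,j}(\omega)$ can be extracted by iterated application of $R$ together with the cyclotomic structure. The hypothesis that $\pi_* A$ is $\mathbb{Z}_{(p)}$-local ensures that no rational contribution pollutes the relevant Tate spectral sequences, so uniqueness reduces to linear independence at the $E^{\infty}$-page.

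The main obstacle is verifying that the Tate spectral sequence computations in loc.~cit.~are truly insensitive to the primality of $p$. At $p = 2$ the classes $\eta$ and $\eta^2$ contribute nontrivially to the $E^2$-page, and one must check that the resulting differentials and hidden extensions respect the bigrading by winding number $j$ and Verschiebung depth $s$ used to organise the decomposition, rather than mixing summands. I expect this compatibility to hold because $\eta$-multiplication is induced by an element of the ground ring $\operatorname{TR}_1^1(\mathbb{S};p)$ and therefore acts diagonally with respect to both gradings. Once this is confirmed, the decomposition follows by the same argument as in the odd-primary case, and the statement with $\mathbb{Z}_p$-coefficients for an arbitrary symmetric ring spectrum $A$ is obtained by applying the decomposition to the $p$-completion of $A$, whose homotopy groups are automatically $\mathbb{Z}_{(p)}$-local.
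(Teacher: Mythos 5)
Your proposal takes essentially the same route as the paper: the paper itself simply cites~\cite[Thm.~C]{hm3} and observes that the odd-primality restriction there is unnecessary, and your plan is to verify precisely this. Your summary of the decomposition $T(A[x^{\pm 1}]) \simeq T(A) \wedge N^{cy}(\Gamma)_+$ by winding number, and the observation that $\eta$-multiplication acts through the ground ring $\operatorname{TR}_1^1(\mathbb{S};p)$ and hence preserves both the winding-number and Verschiebung-depth gradings, correctly identifies why the extra $2$-torsion in $\pi_*\mathbb{S}$ does not obstruct the splitting.

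One genuine flaw, however, is the last sentence. You cannot deduce the $\mathbb{Z}_p$-coefficient statement for arbitrary $A$ by applying the integral statement to the $p$-completion $A_p$. Since $A[x^{\pm 1}] = A \wedge \Gamma_+$ is an infinite wedge of copies of $A$, $p$-completion of spectra does not commute with $(-)\wedge\Gamma_+$; hence $A_p[x^{\pm1}]$ and $(A[x^{\pm1}])_p$ are genuinely different, and $\operatorname{TR}_q^n(A[x^{\pm1}];p,\mathbb{Z}_p)$ is not $\operatorname{TR}_q^n(A_p[x^{\pm1}];p)$. The way~\cite{hm3} handles this case is to note that the winding-number splitting of $\operatorname{TR}^n(A[x^{\pm1}];p)$ holds as an equivalence of spectra before any completion; one then passes to $\mathbb{Z}/p^v$-homotopy, where smashing with the finite Moore spectrum does commute with the infinite wedge and with $N$, $R$, and the Segal--tom Dieck splitting, and takes the limit over $v$. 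Replacing your last sentence by this argument closes the gap; the rest of the proposal stands.
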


It is perhaps helpful to point out that the formula in the statement
of Thm.~\ref{fundamentaltheorem} defines a canonical map from the
direct sum
$$\begin{aligned}
{} & \bigoplus_{j \in \mathbb{Z}} \big( \operatorname{TR}_q^n(A;p)
\oplus \operatorname{TR}_{q-1}^n(A;p) \big) \oplus
\bigoplus_{ \substack{ 1 \leqslant s < n \\ j \in \mathbb{Z}
\smallsetminus p\mathbb{Z}}} \big( \operatorname{TR}_q^{n-s}(A;p) 
\oplus  \operatorname{TR}_{q-1}^{n-s}(A;p) \big) \cr
\end{aligned}$$
to the group $\operatorname{TR}_q^n(A[x^{\pm 1}];p)$ and that the
theorem states that this map is an isomorphism. We also remark that
the assembly map
$$\alpha \colon \operatorname{TR}_q^n(A;p) \oplus
\operatorname{TR}_{q-1}^n(A;p) \to 
\operatorname{TR}_q^n(A[x^{\pm 1}];p)$$
is given by the formula
$$\alpha(a,b) = a[x]_n^0 + b[x]_n^0d\log[x]_n,$$
where $[x]_n^0 = [1]_n \in \operatorname{TR}_0^n(\mathbb{S}[x^{\pm
  1}];p)$ is the multiplicative unit element.

The value of the restriction and Frobenius maps on
$\smash{ \operatorname{TR}_q^n(A[x^{\pm 1}];p) }$ are readily derived
from the general relations. Indeed, if $\smash{ \omega \in
\operatorname{TR}_q^n(A[x^{\pm 1}];p) }$ is equal to the sum in the 
statement of Thm.~\ref{fundamentaltheorem}, then
$$\begin{aligned}
R(\omega) & = 
\sum_{j \in \mathbb{Z}} \big( R(a_{0,j}) [x]_{n-1}^j +
R(b_{0,j}) [x]_{n-1}^j d\log [x]_{n-1} \big) \cr
{} & + 
\sum_{ \substack{1 \leqslant s < n-1 \\ j \in \mathbb{Z} \smallsetminus
  p\mathbb{Z}}} \big( V^s(R(a_{s,j}) [x]_{n-1-s}^j) +
dV^s(R(b_{s,j}) [x]_{n-1-s}^j) \big) \cr
\end{aligned}$$
$$\begin{aligned}
F(\omega) & = 
\sum_{j \in p\mathbb{Z}} \big( F(a_{0,j/p}) [x]_{n-1}^j +
F(b_{0,j/p}) [x]_{n-1}^j d\log [x]_{n-1} \big) \cr
{} & + \sum_{j \in \mathbb{Z} \smallsetminus p\mathbb{Z}}\big(
(pa_{1,j} + db_{1,j} + (p-1) \eta b_{1,j})
[x]_{n-1}^j \cr
{} & \hskip30mm
+ (-1)^{q-1}jb_{1,j}[x]_{n-1}^j d\log [x]_{n-1} \big) \cr
{} & + 
\sum_{ \substack{1 \leqslant s < n-1 \\ j \in \mathbb{Z} \smallsetminus
  p\mathbb{Z}}} \big( V^s((pa_{s+1,j} + (p-1) \eta b_{s+1,j})
[x]_{n-1-s}^j) \cr
{} & \hskip30mm
+ dV^s(b_{s+1,j} [x]_{n-1-s}^j) \big). \cr
\end{aligned}$$
We leave it to the reader to derive the corresponding formulas for the
Verschiebung map and Connes' operator. The following result is an
immediate consequence.

We recall that the limit system $\{ M_n \}$ satisfies the
Mittag-Leffler condition if, for every $n$, there exists
$m \geqslant n$ such that, for all $k \geqslant m$, the image of
$M_k \to M_n$ is equal to the image of $M_m \to M_n$. This implies
that the derived limit $R^1\operatornamewithlimits{lim}_n M_n$
vanishes.

\begin{corollary}\label{kernelR-F}Let $p$ be a prime number, let
$A$ be a symmetric ring spectrum whose homotopy groups are
$\mathbb{Z}_{(p)}$-modules, and let $q$ be an integer. If both of the
limit systems $\smash{ \{ \operatorname{TR}_q^n(A;p) \} }$ and
$\smash{ \{\operatorname{TR}_{q-1}^n(A;p) \} }$ satisfy the 
Mittag-Leffler condition, then so does the limit system
$\smash{ \{ \operatorname{TR}_q^n(A[x^{\pm1}];p)\} }$. Moreover, the
element
$$\omega = (\omega^{(n)}) \in 
\lim_R \operatorname{TR}_q^n(A[x^{\pm1}];p)$$
lies in the kernel of the map $1-F$ if and only if the coefficients
$$\begin{aligned}
a_{s,j}^{(n)} & = a_{s,j}(\omega^{(n)}) \in
\operatorname{TR}_q^{n-s}(A;p) \cr
b_{s,j}^{(n)} & = b_{s,j}(\omega^{(n)}) \in
\operatorname{TR}_{q-1}^{n-s}(A;p) \cr
\end{aligned}$$
satisfy the equations
$$\begin{aligned}
a_{s,j}^{(n-1)} & = \begin{cases}
F(a_{0,j/p}^{(n)}) & \hskip1mm
(\text{$s = 0$ and $j \in p\mathbb{Z}$}) \cr
pa_{1,j}^{(n)} + db_{1,j}^{(n)} + (p-1)\eta b_{1,j}^{(n)} & \hskip1mm
(\text{$s = 0$ and $j \in \mathbb{Z} \smallsetminus p\mathbb{Z}$}) \cr
pa_{s+1,j}^{(n)} + (p-1)\eta b_{s+1,j}^{(n)} & \hskip1mm
(\text{$1 \leqslant s < n-1$ and
$j \in \mathbb{Z}\smallsetminus p\mathbb{Z}$}) \cr
\end{cases} \cr
b_{s,j}^{(n-1)} & = \begin{cases}
F(b_{0,j/p}^{(n)}) & \hskip22mm
(\text{$s = 0$ and $j \in p\mathbb{Z}$}) \cr
(-1)^{q-1}jb_{1,j}^{(n)} & \hskip22mm
(\text{$s = 0$ and $j \in \mathbb{Z}\smallsetminus p\mathbb{Z}$}) \cr
b_{s+1,j}^{(n)} & \hskip22mm
(\text{$1 \leqslant s < n-1$ and
$j \in \mathbb{Z}\smallsetminus p\mathbb{Z}$}) \cr
\end{cases} \cr
\end{aligned}$$
for all $n \geqslant 1$. The corresponding statements for the
equivariant homotopy groups with $\mathbb{Z}_p$-coefficients is valid
for every symmetric ring spectrum $A$.
\end{corollary}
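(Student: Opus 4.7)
The plan is to derive both assertions directly from the uniqueness of the decomposition in Theorem~\ref{fundamentaltheorem} together with the explicit formulas for $R$ and $F$ displayed just above the corollary. Under the hypothesis on $A$, the fundamental theorem identifies $\operatorname{TR}_q^n(A[x^{\pm 1}];p)$ with a direct sum, indexed by the pairs $(s,j)$ with $0 \leqslant s < n$ and with $j$ ranging over $\mathbb{Z}$ or $\mathbb{Z} \smallsetminus p\mathbb{Z}$ as appropriate, of copies of $\operatorname{TR}_q^{n-s}(A;p)$ and $\operatorname{TR}_{q-1}^{n-s}(A;p)$, so both claims will reduce to coordinatewise statements on the coefficients $a_{s,j}^{(n)}$ and $b_{s,j}^{(n)}$.

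For the Mittag-Leffler claim, I would read off from the displayed $R$-formula that $R$ respects the decomposition: the $(s,j)$-coefficient of $R(\omega)$ equals $R$ applied to the corresponding coefficient of $\omega$ whenever $s \leqslant n-2$, while the coefficients at $s = n-1$ simply do not appear in $R(\omega)$. Consequently, for fixed target level $n$ and varying source level $N \geqslant n$, the image of $R^{N-n}$ is the direct sum, over the finite range $0 \leqslant s \leqslant n-1$ and over all admissible $j$, of the images of $R^{N-n} \colon \operatorname{TR}_q^{N-s}(A;p) \to \operatorname{TR}_q^{n-s}(A;p)$ and of the analogue for $\operatorname{TR}_{q-1}$. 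The hypothesis produces, for each fixed $s$, a source level beyond which this image is constant, and because $s$ ranges over a finite set and the summand groups are independent of $j$, a common such level works for all $(s,j)$ simultaneously.

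For the kernel of $1 - F$, I would first use the relation $RF = FR$ to see that $F$ descends to a well-defined endomorphism of $\lim_R \operatorname{TR}_q^n(A[x^{\pm 1}];p)$ given by $F((\omega^{(n)})) = (F(\omega^{(n+1)}))$, so that $\omega$ lies in $\ker(1-F)$ precisely when $\omega^{(n-1)} = F(\omega^{(n)})$ for every $n \geqslant 2$. Expanding both sides in the canonical basis of Theorem~\ref{fundamentaltheorem} and invoking the uniqueness of that expansion, I would match terms as follows: the coefficients $a_{0,j}^{(n-1)}$ and $b_{0,j}^{(n-1)}$ with $j \in p\mathbb{Z}$ come from the first summand of the $F$-formula; the $s = 0$, $j \notin p\mathbb{Z}$ coefficients come from the coefficients of $[x]_{n-1}^j$ and $[x]_{n-1}^j d\log [x]_{n-1}$ in the middle summand; and the $1 \leqslant s < n-1$ coefficients come from the $V^s$ and $dV^s$ summands. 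The resulting identities are exactly those displayed in the corollary, and the converse direction is immediate from the same uniqueness. There is essentially no obstacle beyond the bookkeeping; the only point requiring care is the uniformity in $j$ in the Mittag-Leffler step, and the $\mathbb{Z}_p$-coefficient variant is proved by the identical argument, invoking the final sentence of Theorem~\ref{fundamentaltheorem}.
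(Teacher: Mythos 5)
Your proof is correct and is precisely the bookkeeping the paper leaves implicit when it declares the corollary an immediate consequence of the displayed formulas for $R$ and $F$: compatibility with the coordinatewise decomposition from Theorem~\ref{fundamentaltheorem} gives the Mittag-Leffler part (uniformity in $j$ being automatic, as you note, because the summands depend only on $s$), and uniqueness of the decomposition converts $\omega^{(n-1)} = F(\omega^{(n)})$ into the displayed system of equations on coefficients.
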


We do not have a good description of the cokernel of $1-F$. In
particular, it is generally not easy to decide whether or not this map
is surjective.

\section{Topological cyclic homology}\label{TCintro}

Let $A$ be a symmetric ring spectrum. We recall the definition of the
topological cyclic homology groups
$\operatorname{TC}_q(A;p)$ and refer to~\cite{hm,hm4} for a full
discussion.

We consider the $\mathbb{T}$-fixed point spectrum
$$\operatorname{TR}^n(A;p) = F((\mathbb{T}/C_{p^{n-1}})_+,
T(A))^{\mathbb{T}}$$
of the function $\mathbb{T}$-spectrum $F((\mathbb{T}/C_{p^{n-1}})_+,
T(A))$. There is a canonical isomorphism
$$\iota \colon \pi_q(\operatorname{TR}^n(A;p)) \xrightarrow{\sim}
\operatorname{TR}_q^n(A;p)$$
and maps of spectra
$$R^{\iota}, F^{\iota} \colon \operatorname{TR}^n(A;p) \to
\operatorname{TR}^{n-1}(A;p)$$
such that the following diagrams commute
$$\xymatrix{
{ \pi_q(\operatorname{TR}^n(A;p)) } \ar[r]^{\iota} \ar[d]^{R_*^{\iota}} &
{ \operatorname{TR}_q^n(A;p) } \ar[d]^{R} &
{ \pi_q(\operatorname{TR}^n(A;p)) } \ar[r]^{\iota} \ar[d]^{F_*^{\iota}} &
{ \operatorname{TR}_q^n(A;p) } \ar[d]^{F} \cr
{ \pi_q(\operatorname{TR}^{n-1}(A;p)) } \ar[r]^{\iota} &
{ \operatorname{TR}_q^{n-1}(A;p) } &
{ \pi_q(\operatorname{TR}^{n-1}(A;p)) } \ar[r]^{\iota} &
{ \operatorname{TR}_q^{n-1}(A;p). } \cr
}$$
The map $F^{\iota}$ is induced by the map of $\mathbb{T}$-spectra $f \colon
(\mathbb{T}/C_{p^{n-2}})_+ \to (\mathbb{T}/C_{p^{n-1}})_+$ and the map
$R^{\iota}$ is defined to be the composition of the map
$$F((\mathbb{T}/C_{p^{n-1}})_+,T(A))^{\mathbb{T}} \to
F((\mathbb{T}/C_{p^{n-1}})_+, \tilde{E} \wedge T(A))^{\mathbb{T}}$$
induced by the canonical inclusion of $S^0$ in $\tilde{E}$ and the
weak equivalence
$$\begin{aligned}
{} & F((\mathbb{T}/C_{p^{n-1}})_+, \tilde{E} \wedge T(A))^{\mathbb{T}}
\xleftarrow{\sim}
F((\mathbb{T}/C_{p^{n-1}})_+,(\tilde{E} \wedge T(A))^{C_p})^{\mathbb{T}/C_p} \cr
{} & \xrightarrow{\sim} 
F((\mathbb{T}/C_{p^{n-2}})_+, \rho_p^*((\tilde{E} \wedge
T(A))^{C_p}))^{\mathbb{T}} 
\xrightarrow{\sim} F((\mathbb{T}/C_{p^{n-2}})_+,
T(A))^{\mathbb{T}} \cr
\end{aligned}$$
defined by the composition of of the canonical isomorphism, the
isomorphism $\rho_p^*$, and the map induced by the map $r$ which we
recalled in Sect.~\ref{TRintro} above. We then define
$\operatorname{TC}^n(A;p)$ to be the homotopy equalizer of 
the maps $R^{\iota}$ and $F^{\iota}$ and
$$\operatorname{TC}(A;p) =
\operatornamewithlimits{holim}_{n} \operatorname{TC}^n(A;p)$$
to be the homotopy limit with respect to the maps $R^{\iota}$. We also
define
$$\operatorname{TR}(A;p) = \operatornamewithlimits{holim}_n
\operatorname{TR}^n(A;p)$$
to be the homotopy limit with respect to the maps $R^{\iota}$ such
that we have a long-exact sequence of homotopy groups
$$\cdots \to \operatorname{TC}_q(A;p) \to
\operatorname{TR}_q(A;p) \xrightarrow{1-F}
\operatorname{TR}_q(A;p) \xrightarrow{\partial}
\operatorname{TC}_{q-1}(A;p) \to \cdots.$$
We recall Milnor's short-exact sequence
$$0 \to 
R^1\operatornamewithlimits{lim}_n \operatorname{TR}_{q+1}^n(A;p) \to
\operatorname{TR}_q(A;p) \to
\operatornamewithlimits{lim}_n
\operatorname{TR}_q^n(A;p) \to 0.$$
In the cases we consider below, the derived limit on the left-hand
side vanishes.

The cyclotomic trace map of
B\"{o}kstedt-Hsiang-Madsen~\cite{bokstedthsiangmadsen} is a map of
spectra
$$\operatorname{tr} \colon K(A) \to \operatorname{TC}(A;p).$$
A technically better definition of the cyclotomic trace map was given
by Dundas-McCarthy~\cite[Sect.~2.0]{dundasmccarthy}
and~\cite{dundas1}. From the latter definition it is clear that every
class $x$ in the image of the composite map
$$K_q(A) \xrightarrow{\operatorname{tr}} \operatorname{TC}_q(A;p) \to
\operatorname{TC}_q^n(A;p) \to \operatorname{TR}_q^n(A;p)$$
is annihilated by Connes' operator. It is also not difficult to show 
that, for $A$ commutative, the cyclotomic trace is
multiplicative;~see~\cite[Appendix]{gh}. 

The spectrum $\operatorname{TR}^n(A;p)$ considered here is
canonically isomorphic to the underlying non-equivariant spectrum
associated with the $\mathbb{T}$-spectrum
$$\mathit{TR}^{\,n}(A;p) = \rho_{p^{n-1}}^*(T(A)^{C_{p^{n-1}}}).$$
Moreover, the fundamental long-exact sequence of~\cite[Thm.~2.2]{hm}
has the following generalization which is used in the proof of 
Lemma~\ref{connesoperatorsurjective} below.

\begin{proposition}\label{fundamentalcofibrationsequence}Let $A$ be a
symmetric ring spectrum, and let $m$ and $n$ be positive
integers. Then there is a natural long-exact sequence
$$\cdots \to \mathbb{H}_q(C_{p^m},\mathit{TR}^{\,n}(A;p))
\xrightarrow{N_n} 
\operatorname{TR}_q^{m+n}(A;p) \xrightarrow{R^n} 
\operatorname{TR}_q^m(A;p) \to \cdots$$
where the left-hand term is the group homology of $C_{p^m}$ with
coefficients in the underlying $C_{p^m}$-spectrum of
$\mathit{TR}^{\,n}(A;p)$.
\end{proposition}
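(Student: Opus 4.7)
\noindent\textit{Proof proposal.} My approach is induction on $n$; the base case $n=1$ is the fundamental long-exact sequence of~\cite[Thm.~2.2]{hm} recalled in Section~\ref{TRintro}. For the inductive step, I apply the inductive hypothesis at the pair $(m,n-1)$ and the base case at $(m+n-1,1)$. These yield, respectively, long-exact sequences for the restriction maps
$R^{n-1}\colon \operatorname{TR}_q^{m+n-1}(A;p) \to \operatorname{TR}_q^m(A;p)$
with fiber term $\mathbb{H}_q(C_{p^m}, \mathit{TR}^{\,n-1}(A;p))$, and
$R\colon \operatorname{TR}_q^{m+n}(A;p) \to \operatorname{TR}_q^{m+n-1}(A;p)$
with fiber term $\mathbb{H}_q(C_{p^{m+n-1}}, T(A))$. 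Since $R^n = R^{n-1} \circ R$, the octahedral axiom applied to this factorization produces a fiber sequence of spectra
$$\operatorname{fib}(R) \to \operatorname{fib}(R^n) \to \operatorname{fib}(R^{n-1}),$$
which exhibits the fiber of $R^n$ as an extension of $\mathbb{H}_{*}(C_{p^m}, \mathit{TR}^{\,n-1}(A;p))$ by $\mathbb{H}_{*}(C_{p^{m+n-1}}, T(A))$.

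The remaining task is to identify this extension with $\mathbb{H}_{*}(C_{p^m}, \mathit{TR}^{\,n}(A;p))$. For this, I would exhibit a cofibration sequence of $\mathbb{T}$-spectra
$$\rho_{p^{n-1}}^*\bigl((E_+ \wedge T(A))^{C_{p^{n-1}}}\bigr) \to \mathit{TR}^{\,n}(A;p) \xrightarrow{R} \mathit{TR}^{\,n-1}(A;p),$$
obtained by applying $\rho_{p^{n-1}}^*\bigl((-)^{C_{p^{n-1}}}\bigr)$ to the cofibration $E_+ \wedge T(A) \to T(A) \to \tilde E \wedge T(A)$ and using the cyclotomic structure map $r$ iteratively to identify the right-hand term with $\mathit{TR}^{\,n-1}(A;p)$. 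Applying $\mathbb{H}_q(C_{p^m}, -)$ to this sequence produces a long-exact sequence whose middle term is $\mathbb{H}_q(C_{p^m}, \mathit{TR}^{\,n}(A;p))$. By change of groups along the Lie-group isomorphism $\rho_{p^{n-1}} \colon \mathbb{T} \to \mathbb{T}/C_{p^{n-1}}$, which sends $C_{p^m}$ to $C_{p^{m+n-1}}/C_{p^{n-1}}$, together with the Adams isomorphism (using that $E$ is a free $C_{p^{n-1}}$-space), its left-hand term is identified with $\mathbb{H}_q(C_{p^{m+n-1}}, T(A))$, matching the octahedral extension and closing the induction.

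The main obstacle is this last identification
$$\mathbb{H}_q\bigl(C_{p^m}, \rho_{p^{n-1}}^*((E_+ \wedge T(A))^{C_{p^{n-1}}})\bigr) \cong \mathbb{H}_q(C_{p^{m+n-1}}, T(A)),$$
which requires careful bookkeeping of the free $C_{p^{n-1}}$-action on $E$, the collapse of its fixed points to homotopy orbits via the Adams isomorphism, and the Lie-group change-of-groups along $\rho_{p^{n-1}}$. A secondary point to verify is the compatibility of the connecting maps in the two long-exact sequences, which should follow from the naturality of all the constructions involved.
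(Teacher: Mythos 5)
Your plan is to induct on $n$ via the octahedral axiom applied to the factorization $R^n = R^{n-1}\circ R$, and then to pin down the middle term of the resulting extension by comparing it against a second long-exact sequence, obtained by applying $\mathbb{H}_q(C_{p^m},-)$ to a cofibration of $\mathbb{T}$-spectra. The gap is in that comparison. You end up with two long-exact sequences whose outer terms agree after the base case and the inductive hypothesis are fed in: one has $\pi_q(\operatorname{fib}(R^n))$ in the middle, the other has $\mathbb{H}_q(C_{p^m},\mathit{TR}^{\,n}(A;p))$. You assert that they ``match'' and close the induction, but you never produce a map of long-exact sequences linking them. Two long-exact sequences with isomorphic outer terms need not have isomorphic middle terms, and without a commuting ladder there is no five lemma to invoke. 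Constructing the missing comparison map — in effect, an equivalence from $\operatorname{fib}(R^n)$ to the spectrum realizing $\mathbb{H}_*(C_{p^m},\mathit{TR}^{\,n}(A;p))$ — is precisely the nontrivial content, and once one has it the octahedral/inductive scaffolding becomes superfluous.

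The paper instead argues in one step. It applies the functor $[S^q\wedge(\mathbb{T}/C_{p^m})_+,-]_{\mathbb{T}}$ to the cofibration sequence of $\mathbb{T}$-spectra
$$E_+\wedge\rho_{p^{n-1}}^*(T(A)^{C_{p^{n-1}}}) \to \rho_{p^{n-1}}^*(T(A)^{C_{p^{n-1}}}) \to \tilde E\wedge\rho_{p^{n-1}}^*(T(A)^{C_{p^{n-1}}}),$$
with $E_+$ smashed \emph{outside} $\mathit{TR}^{\,n}(A;p)=\rho_{p^{n-1}}^*(T(A)^{C_{p^{n-1}}})$. The left-hand term then yields $\mathbb{H}_q(C_{p^m},\mathit{TR}^{\,n}(A;p))$ by the change-of-groups isomorphism of Sect.~1; the middle term yields $\operatorname{TR}_q^{m+n}(A;p)$ by change of groups along $\rho_{p^{n-1}}$; and the right-hand term is identified with $\operatorname{TR}_q^m(A;p)$ via an iterated $\mathcal{F}_p$-equivalence $r'\colon \tilde E\wedge\rho_{p^{n-1}}^*(T(A)^{C_{p^{n-1}}})\to\tilde E\wedge T(A)$ built from the cyclotomic structure map, composed with the $n=1$ identification. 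By contrast, your cofibration has $E_+$ smashed \emph{inside} the $C_{p^{n-1}}$-fixed points, which is a different $\mathbb{T}$-spectrum; applying $[S^q\wedge(\mathbb{T}/C_{p^m})_+,-]_{\mathbb{T}}$ to it only reproduces the one-step fundamental sequence at level $m+n-1$, and applying $\mathbb{H}_q(C_{p^m},-)$ to it yields the auxiliary sequence with the wrong outer terms for a direct proof. So the difficulty you flagged at the end (the Adams isomorphism and Lie change-of-groups identification of the left-hand term) is real but secondary; the primary gap is the missing comparison of the two sequences, and the cleanest repair is to abandon the induction and choose the cofibration the way the paper does.
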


\begin{proof}A map of $\mathbb{T}$-spectra $f \colon T \to T'$ is
defined to be an $\mathcal{F}_p$-equivalence if it induces an
isomorphism of equivariant homotopy groups
$$f_* \colon [S^q \wedge (\mathbb{T}/C_{p^v})_+, T]_{\mathbb{T}} \to
[S^q \wedge (\mathbb{T}/C_{p^v})_+, T']_{\mathbb{T}}$$
for all integers $q$ and $v \geqslant 0$. The cofibration sequence of
pointed $\mathbb{T}$-spaces
$$E_+ \xrightarrow{\pi} S^0 \xrightarrow{\iota} \tilde{E}
\xrightarrow{\partial} \Sigma E_+,$$
which we considered in Sect.~\ref{TRintro}, induces a cofibration
sequence of $\mathbb{T}$-spectra
$$E_+ \wedge \rho_{p^s}^*(T(A)^{C_{p^s}}) \to
\rho_{p^s}^*(T(A)^{C_{p^s}}) \to
\tilde{E} \wedge \rho_{p^s}^*(T(A)^{C_{p^s}}) \to
\Sigma E_+ \wedge \rho_{p^s}^*(T(A)^{C_{p^s}}).$$
We show that with $s = n - 1$, the induced long-exact sequence of
equivariant homotopy groups is isomorphic to the sequence of the
statement. The isomorphism of the left-hand terms in the two sequences
is defined as in Sect.~\ref{TRintro}. To define the isomorphism of the
right-hand terms in the two sequences, we first show that the
cyclotomic structure map $r$ gives rise to an
$\mathcal{F}_p$-equivalence
$$r' \colon \tilde{E} \wedge \rho_{p^{n-1}}^*(T(A)^{C_{p^{n-1}}})
\xrightarrow{\sim}
\tilde{E} \wedge T(A).$$
Since the map $\pi \colon E_+ \to S^0$ induces a weak equivalence
$$E_+ \wedge \rho_{p^s}^*((E_+ \wedge T(A))^{C_{p^s}})
\xrightarrow{\sim} \rho_{p^s}^*((E_+ \wedge T(A))^{C_{p^s}}),$$
a diagram chase shows that the map $\iota \colon S^0 \to \tilde{E}$
induces a weak equivalence
$$\tilde{E} \wedge \rho_{p^s}^*(T(A)^{C_{p^s}}) \xrightarrow{\sim}
\tilde{E} \wedge \rho_{p^s}^*(\tilde{E} \wedge T(A)^{C_{p^s}}).$$
The cyclotomic structure map $r$ induces an $\mathcal{F}_p$-equivalence
$$\tilde{E} \wedge \rho_{p^s}^*(\tilde{E} \wedge T(A)^{C_{p^s}})
\xrightarrow{\sim} \tilde{E} \wedge \rho_{p^{s-1}}^*(T(A)^{C_{p^{s-1}}})$$
which, composed with the former equivalence, defines an
$\mathcal{F}_p$-equivalence
$$\tilde{E} \wedge \rho_{p^s}^*(T(A)^{C_{p^s}}) \xrightarrow{\sim}
\tilde{E} \wedge \rho_{p^{s-1}}^*(T(A)^{C_{p^{s-1}}}).$$
The composition of these $\mathcal{F}_p$-equivalence as $s$ varies
from $n-1$ to $1$ gives the desired $\mathcal{F}_p$-equivalence
$r'$. The isomorphism of the right-hand terms in the two sequences is
now given by the composition of the isomorphism
$$[S^q \wedge (\mathbb{T}/C_{p^m})_+, \tilde{E} \wedge
\rho_{p^{n-1}}^*(T(A)^{C_{p^{n-1}}})]_{\mathbb{T}} \xrightarrow{\sim}
[S^q \wedge (\mathbb{T}/C_{p^m})_+, \tilde{E} \wedge
T(A)]_{\mathbb{T}}$$
induced by the map $r'$ and the isomorphism
$$[S^q \wedge (\mathbb{T}/C_{p^m})_+, \tilde{E} \wedge
T(A)]_{\mathbb{T}} \xrightarrow{\sim}
[S^q \wedge (T/C_{p^{m-1}})_+, T(A)]_{\mathbb{T}}$$
defined in Sect.~\ref{TRintro}.
\qed
\end{proof}

\section{The skeleton spectral
  sequence}\label{skeletonspectralsequence}

The left-hand groups in Prop.~\ref{fundamentalcofibrationsequence} are
the abutment of the strongly convergent skeleton spectral sequence which  
we now discuss in some detail. Let $G$ be a finite group, and let $T$
be a $G$-spectrum. Then we define
$$\mathbb{H}_q(G,T) = [S^q, E_+ \wedge T]_G,$$
where $E$ is a free contractible $G$-CW-complex. The group
$\mathbb{H}_q(G,T)$ is well-defined up to canonical
isomorphism. Indeed, if also $E'$ is a free contractible
$G$-CW-complex, then there is a unique homotopy class of $G$-maps $u
\colon E \to E'$, and the induced map $u_* \colon [S^q, E_+ \wedge
T]_G \to [S^q, E_+' \wedge T]_G$ is the canonical isomorphism. The
skeleton filtration of the $G$-CW-complex $E$ gives rise to a spectral
sequence
$$E_{s,t}^2 = H_s(G; \pi_t(T)) \Rightarrow \mathbb{H}_{s+t}(G,T)$$
from the homology of the group $G$ with coefficients in the $G$-module
$\pi_t(T)$. We will need the precise identification of the $E^2$-term
below. The augmented cellular complex of $E$ is the augmented chain
complex $\epsilon \colon P \to \mathbb{Z}$ defined by
$$P_s = \tilde{H}_s(E_s/E_{s-1}; \mathbb{Z})$$
with the differential $d$ induced by the map $\partial$ in the
cofibration sequence
$$E_{s-1}/E_{s-2} \to E_s/E_{s-2} \to E_s/E_{s-1}
\xrightarrow{\partial} \Sigma E_{s-1}/E_{s-2}.$$
and with the augmentation given by $\epsilon(x) = 1$, for all $x \in
E_0$. It is a resolution of the trivial $G$-module $\mathbb{Z}$ by
free $\mathbb{Z}[G]$-modules. We define
$$H_s(G,\pi_t(T)) = H_s((P \otimes \pi_t(T))^G, d \otimes
\operatorname{id}).$$
The $E^1$-term of the spectral sequence is defined by
$$E_{s,t}^1 = [S^{s+t}, (E_s/E_{s-1}) \wedge T]_G$$
with the $d^1$-differential induced by the boundary map $\partial$ in
the cofibration sequence above. The quotient $E_s/E_{s-1}$ is
homeomorphic to a wedge of $s$-spheres indexed by a set on which the
groups $G$ acts freely. Therefore, the Hurewicz homomorphism
$$\pi_s(E_s/E_{s-1}) \to
\tilde{H}(E_s/E_{s-1};\mathbb{Z}),$$
the exterior product map
$$\pi_s(E_s/E_{s-1}) \otimes \pi_t(T) \to
\pi_{s+t}((E_s/E_{s-1}) \wedge T),$$
and the canonical map
$$[S^{s+t}, (E_s/E_{s-1}) \wedge T]_G \to
(\pi_{s+t}((E_s/E_{s-1}) \wedge T))^G$$
are all isomorphisms. These isomorphisms gives rise to a canonical
isomophism
$$h \colon (P_s \otimes \pi_t(T))^G \xrightarrow{\sim} E_{s,t}^1$$
which satisfies $h \circ (d \otimes \operatorname{id}) = d^1 \circ h$.
The induced isomorphism of homology groups is then the desired
identification of the $E^2$-term.

We consider the skeleton spectral sequence with $G = C_{p^{n-1}}$ and
$T = \mathit{TR}^{\,v}(A;p)$. Since the action by
$\smash{C_{p^{n-1}}}$ on $\mathit{TR}^{\,v}(A;p)$ is the 
restriction of an action by the circle group $\mathbb{T}$, the induced
action on the homotopy groups $\operatorname{TR}_t^v(A;p)$ is
trivial. Moreover, it follows from~\cite[Lemma~1.4.2]{h} that the
$d^2$-differential of the spectral sequence is related to Connes'
operator $d$ in the following way.

\begin{lemma}\label{d^2-differential}Let $A$ be a symmetric ring
spectrum. Then, in the spectral sequence
$$E_{s,t}^2 = H_s(C_{p^{n-1}},\operatorname{TR}_t^v(A;p)) \Rightarrow
\mathbb{H}_{s+t}(C_{p^{n-1}}, \mathit{TR}^{\,v}(A;p)),$$
the $d^2$-differential $d^2 \colon E_{s,t}^2 \to E_{s-2,t+1}^2$
is equal to the map of group homology groups induced by $d + \eta$, if
$s$ is congruent to $0$ or $1$ modulo $4$, and the map induced by $d$,
if $s$ is congruent to $2$ or $3$ modulo $4$.
\end{lemma}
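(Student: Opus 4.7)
The plan is to reduce the statement to the computation of a single attaching-map class in the unit sphere $E = S(\mathbb{C}^{\infty})$, which is what is carried out in Lemma~1.4.2 of~\cite{h}.

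First, I would choose a free $C_{p^{n-1}}$-CW structure on $E$ that is compatible with the ambient $\mathbb{T}$-action, obtained by refining the filtration of $E$ by the unit spheres $S(\mathbb{C}^k)$. Each principal $\mathbb{T}$-bundle $S(\mathbb{C}^k) \to \mathbb{CP}^{k-1}$ pulls the standard cell decomposition of $\mathbb{CP}^{\infty}$ back to a $\mathbb{T}$-equivariant cell structure with one free $\mathbb{T}$-cell in each odd dimension, and restricting the group action to $C_{p^{n-1}}$ gives a free $C_{p^{n-1}}$-CW structure in which $E_s/E_{s-1} \simeq (C_{p^{n-1}})_+ \wedge S^s$ for all $s \geqslant 0$. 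The isomorphism $h$ of Sect.~\ref{skeletonspectralsequence} then identifies $E_{s,t}^1$ with $\operatorname{TR}_t^v(A;p)$, while $d^1$ reproduces the standard periodic resolution of $\mathbb{Z}$ over $\mathbb{Z}[C_{p^{n-1}}]$ and computes the $E^2$-term as the asserted group homology.

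Next, I would compute $d^2$ from the iterated boundary composite
$$E_s/E_{s-1} \xrightarrow{\partial} \Sigma E_{s-1}/E_{s-2} \xrightarrow{\Sigma \partial} \Sigma^2 E_{s-2}/E_{s-3},$$
which, under the identifications above, determines a $C_{p^{n-1}}$-equivariant stable self-map of $(C_{p^{n-1}})_+ \wedge S^s$ raising degree by one, hence a class in $\pi_1^{st}((C_{p^{n-1}})_+)$. Augmenting to $\pi_1^{st}(S^0)$ yields a combination of two contributions: the class defining the $\mathbb{T}$-action on $E$, which by construction agrees with the class $\delta''$ of Sect.~\ref{TRintro} and therefore acts on $\operatorname{TR}_t^v(A;p)$ by Connes' operator $d$; and the Hopf class $\eta \in \pi_1^{st}(S^0)$ arising from the attaching map of the $2$-cell in the Hopf fibration $S(\mathbb{C}^2) = S^3 \to \mathbb{CP}^1 = S^2$.

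The main obstacle is to pin down the sign with which the $\eta$-summand appears as $s$ varies. This sign is controlled by the orientation of the cell of $\mathbb{CP}^k$ relative to the fibre direction in $S(\mathbb{C}^{k+1})$, and it alternates with the period-four pattern of the statement: for $s \equiv 0,1 \pmod{4}$ the two contributions combine to $d + \eta$, while for $s \equiv 2,3 \pmod{4}$ the Hopf contribution cancels and only $d$ survives. This sign book-keeping is precisely the content of~\cite[Lemma~1.4.2]{h}, which I would invoke to conclude; naturality of the skeleton spectral sequence in the $\mathbb{T}$-spectrum $T$, together with the fact that $d$ and $\eta$ are defined by universal classes independent of $A$, reduces the general case to that universal cellular computation.
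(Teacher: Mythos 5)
Your proposal ultimately rests on the same reference as the paper: \cite[Lemma~1.4.2]{h} carries the entire burden, and the paper gives no argument beyond the sentence introducing the lemma. Your framing---pulling the cell decomposition of $\mathbb{CP}^{\infty}$ back to a $\mathbb{T}$-CW structure on $E$, restricting to a free $C_{p^{n-1}}$-CW structure with one cell per dimension, and recognizing that the $d^2$-class combines a contribution from the circle action (producing $\delta''$, hence Connes' operator) and a contribution from the Hopf attaching maps (producing $\eta$)---is in the right spirit and anticipates what the cited lemma does.

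However, the step where you describe $d^2$ is in error, and as written it would compute the zero map. The composite
$$E_s/E_{s-1} \xrightarrow{\;\partial\;} \Sigma E_{s-1}/E_{s-2} \xrightarrow{\;\Sigma\partial\;} \Sigma^2 E_{s-2}/E_{s-3}$$
is nullhomotopic---that is precisely the statement $d^1 \circ d^1 = 0$ in the form of a cofiber-sequence identity---and even if one ignored that, it preserves the internal degree $t$, so under your identifications it would give a class in $\pi_0^{\mathrm{st}}((C_{p^{n-1}})_+)$, not in $\pi_1^{\mathrm{st}}((C_{p^{n-1}})_+)$ as you assert. The differential $d^2 \colon E_{s,t}^2 \to E_{s-2,t+1}^2$ raises $t$ by one and is a secondary operation: for a $d^1$-cycle $x \in [S^{s+t}, (E_s/E_{s-1}) \wedge T]_G$, the boundary $\partial_*(x) \in [S^{s+t-1}, E_{s-1} \wedge T]_G$ lifts to a class $y \in [S^{s+t-1}, E_{s-2} \wedge T]_G$, and $d^2(x)$ is the image of $y$ in $[S^{s+t-1}, (E_{s-2}/E_{s-3}) \wedge T]_G$. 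It is this lift, not the composite of boundary maps, that is encoded by the class in $\pi_1^{\mathrm{st}}((C_{p^{n-1}})_+)$ combining $\delta''$ and $\eta$; extracting that class, with the correct period-four sign pattern, is exactly the content of \cite[Lemma~1.4.2]{h}. Your sketch does not reproduce that step, so the citation is doing all the work.
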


The Frobenius and Verschiebung maps
$$\begin{aligned}
F & \colon \mathbb{H}_q(C_{p^{n-1}},\mathit{TR}^{\,v}(A;p)) \to
\mathbb{H}_q(C_{p^{n-2}},\mathit{TR}^{\,v}(A;p)) \cr
V & \colon \mathbb{H}_q(C_{p^{n-2}},\mathit{TR}^{\,v}(A;p)) \to
\mathbb{H}_q(C_{p^{n-1}},\mathit{TR}^{\,v}(A;p)) \cr
\end{aligned}$$
induce maps of spectral sequences which on the $E^2$-terms of the
corresponding skeleton spectral sequence are given by the transfer and
corestriction maps in group homology corresponding to the inclusion of
$C_{p^{n-2}}$ in $C_{p^{n-1}}$.

Let $g \in C_{p^{n-1}}$ be the generator $g =
\exp(2 \pi i/p^{n-1})$, and let $\epsilon \colon W \to
\mathbb{Z}$ be the standard resolution which in degree $s$ is a free
$\mathbb{Z}[C_{p^{n-1}}]$-module of rank one on a generator $x_s$ with
differential $dx_s = Nx_{s-1}$, for $s$ even, and $dx_s =
(g-1)x_{s-1}$, for $s$ odd, and with augmentation $\epsilon(x_0) = 1$. 

\begin{lemma}\label{homologyfinite}Let $r$ and $n$ be positive
integers, and let $p$ be a prime number.

(i) If $r \leqslant n-1$, then
$$H_s(C_{p^{n-1}}, \mathbb{Z}/p^r\mathbb{Z}) = 
\mathbb{Z}/p^r\mathbb{Z} \cdot z_s,$$
where $z_s = z_s(p,n,r)$ is the class of $Nx_s \otimes 1$.

(ii) If $r \geqslant n-1$, then
$$H_s(C_{p^{n-1}}, \mathbb{Z}/p^r\mathbb{Z}) = \begin{cases}
{} \mathbb{Z}/p^r\mathbb{Z} \cdot z_0 &
\text{($s = 0$)} \cr
{} \mathbb{Z}/p^{n-1}\mathbb{Z} \cdot z_s &
\text{($s$ odd)} \cr
{} \mathbb{Z}/p^{n-1}\mathbb{Z} \cdot p^{r-(n-1)}z_s &
\text{($s > 0$ and even)} \cr
\end{cases}$$
where $z_s = z_s(p,n,r)$ and $p^{r-(n-1)}z_s = p^{r-(n-1)}z_s(p,n,r)$ are
the classes of $Nx_s \otimes 1$ and $p^{r-(n-1)}Nx_s \otimes 1$,
respectively. 

(iii) The transfer map
$$F \colon H_s(C_{p^{n-1}},\mathbb{Z}/p^r\mathbb{Z}) \to
H_s(C_{p^{n-2}},\mathbb{Z}/p^r\mathbb{Z})$$
maps $z_s$ to $z_s$, if $s$ is odd, maps $z_s$ to $pz_s$, if $s = 0$
or if $s > 0$ is even and $r \leqslant n-1$, and maps $p^{r-(n-1)}z_s$
to $p^{r-(n-2)}z_s$, if $s > 0$ is even and $r \geqslant n-1$.

(iv) The corestriction map
$$V \colon H_s(C_{p^{n-2}},\mathbb{Z}/p^r\mathbb{Z}) \to
H_s(C_{p^{n-1}},\mathbb{Z}/p^r\mathbb{Z})$$
maps $z_s$ to $pz_s$, if $s$ is odd, maps $z_s$ to $z_s$, if $s = 0$
or if $s > 0$ is even and $r \leqslant n-1$, and maps $p^{r-(n-1)}z_s$
to $p^{r-(n-1)}z_s$, if $s > 0$ is even and $r \geqslant n-1$.
\end{lemma}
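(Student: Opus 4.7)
The plan is to proceed by direct computation with the explicit standard resolution $W$ introduced just above the statement. For parts (i) and (ii), applying the functor $(-\otimes\mathbb{Z}/p^r\mathbb{Z})^{C_{p^{n-1}}}$ to $W$ yields in degree $s$ the cyclic group of order $p^r$ generated by $Nx_s\otimes 1$, since $W_s$ is free of rank one over $\mathbb{Z}[C_{p^{n-1}}]$. The induced differential sends $Nx_s\otimes 1$ to $N\cdot dx_s\otimes 1$, which vanishes for $s$ odd (as $N(g-1)=0$) and equals $p^{n-1}Nx_{s-1}\otimes 1$ for $s$ even (as $N^2=p^{n-1}N$). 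The two cases (i) and (ii) then follow from the dichotomy of whether $p^{n-1}$ acts as zero on $\mathbb{Z}/p^r\mathbb{Z}$ or has kernel $p^{r-(n-1)}\mathbb{Z}/p^r\mathbb{Z}$.

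For parts (iii) and (iv), I view $W$ also as a free $\mathbb{Z}[C_{p^{n-2}}]$-resolution of $\mathbb{Z}$ via the inclusion $C_{p^{n-2}}=\langle g^p\rangle\subset C_{p^{n-1}}$, in which $W_s$ has the $\mathbb{Z}[C_{p^{n-2}}]$-basis $\{x_s,gx_s,\ldots,g^{p-1}x_s\}$. Letting $W'$ denote the standard resolution for $C_{p^{n-2}}$ with generators $y_s$, I construct explicit $\mathbb{Z}[C_{p^{n-2}}]$-chain maps $\phi\colon W\to W'$ and $\psi\colon W'\to W$ lifting the identity of $\mathbb{Z}$, given on basis elements by
\[
\phi_s(g^i x_s)=\begin{cases}y_s & (s\text{ even, any }i)\\ \delta_{i,p-1}\,y_s & (s\text{ odd})\end{cases},\qquad \psi_s(y_s)=\begin{cases}x_s & (s\text{ even})\\ \sum_{i=0}^{p-1}g^ix_s & (s\text{ odd}).\end{cases}
\]
The key relation $N_{C_{p^{n-1}}}=(1+g+\cdots+g^{p-1})\,N_{C_{p^{n-2}}}$ makes the chain-map identities immediate to verify inductively.

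The corestriction $V$ is then realized as the composite of $\psi$ (applied to chains in $W'\otimes_{\mathbb{Z}[C_{p^{n-2}}]}\mathbb{Z}/p^r\mathbb{Z}$) with the natural quotient onto $W\otimes_{\mathbb{Z}[C_{p^{n-1}}]}\mathbb{Z}/p^r\mathbb{Z}$, while the transfer $F$ is realized as the composite of the chain-level averaging $[x_s\otimes 1]_{C_{p^{n-1}}}\mapsto\sum_{i=0}^{p-1}[g^ix_s\otimes 1]_{C_{p^{n-2}}}$ with $\phi$. Reading off on the generator $z_s$: for $s$ even, $V(z_s)=z_s$ (since $\psi_s(y_s)=x_s$ projects to the single $G$-generator) and $F(z_s)=pz_s$ (each of the $p$ averaged summands contributes $y_s$); for $s$ odd, $V(z_s)=pz_s$ (from $\psi_s(y_s)=\sum_ig^ix_s$ projecting to $p$ copies) and $F(z_s)=z_s$ (since $\phi$ annihilates all summands except the last). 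The formulas involving $p^{r-(n-1)}z_s$ in case (ii) follow at once from $\mathbb{Z}/p^r\mathbb{Z}$-linearity. The only real care needed is the bookkeeping of generators at the boundary $r=n-1$ and $s>0$ even, where the classification switches between cases (i) and (ii) for the source and target; there is no conceptual obstacle.
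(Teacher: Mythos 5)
Your proposal is correct and follows essentially the same route as the paper: compute parts (i)–(ii) directly from the invariant/coinvariant subcomplex of the standard resolution, and realize $F$ and $V$ by the explicit $C_{p^{n-2}}$-linear chain maps between $W$ and $W'$ (your $\phi$ and $\psi$ are exactly the paper's $h$ and $k$). The only cosmetic differences are that you work with coinvariants where the paper uses invariants (equivalent here via the norm) and that your formula for $\phi$ on $s$ even basis elements quietly corrects what appears to be a typo in the paper's definition of $h$.
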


\begin{proof}The statements (i) and (ii) are readily verified. To
prove (iii) and (iv), we write $\epsilon \colon W \to \mathbb{Z}$ and
$\epsilon' \colon W' \to \mathbb{Z}$ for the standard resolutions for
the groups $C_{p^{n-1}}$ and $C_{p^{n-2}}$, respectively. Then
$\epsilon \colon W \to \mathbb{Z}$ is a resolution of $\mathbb{Z}$ by
free $C_{p^{n-2}}$-modules. The map $h \colon W \to W'$ defined by
$$h(g^{dp+r}x_s) = \begin{cases}
g'{}^r x_s' & \text{($s$ even)} \cr
\delta_{r,p-1} g'{}^d x_s' & \text{($s$ odd),} \cr
\end{cases}$$
where $0 \leqslant r < p$ and $0 \leqslant d < n-2$, is a
$C_{p^{n-2}}$-linear chain map that lifts the identity map of
$\mathbb{Z}$, and the $C_{p^{n-2}}$-linear map $k \colon W' \to W$
defined by
$$k(x_s') = \begin{cases}
x_s & \text{($s$ even)} \cr
(1 + g + \dots g^{p-1})x_s & \text{($s$ odd)} \cr
\end{cases}$$
is a chain map and lifts the identity of $\mathbb{Z}$. Now the transfer
map $F$ is the map of homology groups induced by the composite chain
map
$$(W \otimes \mathbb{Z}/p^r\mathbb{Z})^{C_{p^{n-1}}} \hookrightarrow
(W \otimes \mathbb{Z}/p^r\mathbb{Z})^{C_{p^{n-2}}} \xrightarrow{h
  \otimes 1}
(W' \otimes \mathbb{Z}/p^r\mathbb{Z})^{C_{p^{n-2}}}$$
where the left-hand map is the canonical inclusion. One verifies
readily that this map takes $Nx_{2i} \otimes 1$ to $pN'x_{2i}' \otimes
1$ and $Nx_{2i-1} \otimes 1$ to $N'x_{2i-1}' \otimes 1$. Similarly,
the corestriction map $V$ is the map of homology groups induced by the
composite chain map
$$(W' \otimes \mathbb{Z}/p^r\mathbb{Z})^{C_{p^{n-2}}} \xrightarrow{k
  \otimes 1}
(W \otimes \mathbb{Z}/p^r\mathbb{Z})^{C_{p^{n-2}}} \xrightarrow{N/N'}
(W \otimes \mathbb{Z}/p^r\mathbb{Z})^{C_{p^{n-1}}}$$
where the right-hand map is multiplication by $1 + g + \dots +
g^{p-1}$. This map takes $N'x_{2i}' \otimes 1$ to $Nx_{2i} \otimes 1$
and $N'x_{2i-1}' \otimes 1$ to $pNx_{2i-1} \otimes 1$. \qed
\end{proof}

\begin{lemma}\label{homologyinfinite}Let $n$ be a positive integer and
let $p$ be a prime number. Then
$$H_s(C_{p^{n-1}},\mathbb{Z}) = \begin{cases}
\mathbb{Z} \cdot z_0 & \text{($s = 0$)} \cr
\mathbb{Z}/p^{n-1}\mathbb{Z} \cdot z_s & \text{($s$ odd)} \cr
0 & \text{($s>0$ even)} \cr
\end{cases}$$
where $z_s = z_s(p,n)$ is the class of $Nx_s \otimes 1$. The transfer
map
$$F \colon H_s(C_{p^{n-1}},\mathbb{Z}) \to
H_s(C_{p^{n-2}},\mathbb{Z})$$
maps $z_0$ to $pz_0$ and $z_s$ to $z_s$, for $s > 0$, and the
corestriction map
$$V \colon H_s(C_{p^{n-2}},\mathbb{Z}) \to
H_s(C_{p^{n-1}},\mathbb{Z})$$
maps $z_0$ to $z_0$ and $z_s$ to $pz_s$, for $s > 0$.
\end{lemma}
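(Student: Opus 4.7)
The plan is to adapt the proof of Lemma~\ref{homologyfinite} to the coefficient module $\mathbb{Z}$, using the same standard resolution $\epsilon \colon W \to \mathbb{Z}$. Since that proof is entirely chain-level, the translation is mechanical, and no significant obstacle arises.

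I would first identify the complex $C_\bullet = (W \otimes \mathbb{Z})^{C_{p^{n-1}}}$. Since $W_s$ is a rank-one free module over $\mathbb{Z}[C_{p^{n-1}}]$ and $\mathbb{Z}$ carries the trivial action, $C_s$ is the infinite cyclic group generated by $Nx_s \otimes 1 = z_s$. The induced differential satisfies $d(z_s) = p^{n-1} z_{s-1}$ for $s$ even, using $N^2 = p^{n-1} N$, and $d(z_s) = 0$ for $s$ odd, using $N(g-1) = 0$. Taking homology gives $H_0 = \mathbb{Z} \cdot z_0$, $H_s = (\mathbb{Z}/p^{n-1}\mathbb{Z}) \cdot z_s$ for $s$ odd, and $H_s = 0$ for $s$ positive and even, establishing the first assertion.

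For the assertions on $F$ and $V$, I would invoke the chain maps $h \colon W \to W'$ and $k \colon W' \to W$ constructed in the proof of Lemma~\ref{homologyfinite}. Writing an arbitrary element of $C_{p^{n-1}}$ as $g^{dp+r}$ with $0 \leqslant r < p$ and $0 \leqslant d < p^{n-2}$, a direct summation over this coset decomposition gives $h(Nx_s) = pN'x_s'$ for $s$ even and $h(Nx_s) = N'x_s'$ for $s$ odd; passing to classes yields $F(z_0) = pz_0$ and $F(z_s) = z_s$ for $s$ odd. For $V$, the same type of computation shows $k(N'x_s') = N'_{*} x_s$ for $s$ even, where $N'_{*} = 1 + g^p + \cdots + g^{(p^{n-2}-1)p}$, and $k(N'x_s') = Nx_s$ for $s$ odd; postcomposing with multiplication by $1 + g + \cdots + g^{p-1}$ and using the factorization $N = (1 + g + \cdots + g^{p-1}) N'_{*}$ along with the identity $g \cdot N = N$ gives $Nx_s$ for $s$ even and $pNx_s$ for $s$ odd, whence $V(z_0) = z_0$ and $V(z_s) = pz_s$ for $s$ odd. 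I would therefore present the proof as a short remark stating that the chain maps $h$ and $k$ from Lemma~\ref{homologyfinite} apply verbatim over $\mathbb{Z}$, and recording these values.
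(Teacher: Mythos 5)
Your proof is correct and follows the same route the paper intends: the paper's own proof consists of the single remark that the argument is parallel to Lemma~\ref{homologyfinite}, and your proposal simply carries out that parallel in detail, computing the invariant complex $(W\otimes\mathbb{Z})^{C_{p^{n-1}}}$ with differential $p^{n-1}$ or $0$ and applying the same chain maps $h$ and $k$ to evaluate $F$ and $V$ on the classes $z_s$.
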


\begin{proof}The proof is similar to the proof of
Lemma~\ref{homologyfinite}. 
\qed
\end{proof}

\section{The groups
 $\operatorname{TR}_q^n(\mathbb{S};2)$}\label{spheresection}

In this section, we implicitly consider homotopy groups with
$\mathbb{Z}_2$-coefficients. The groups
$\operatorname{TR}_*^1(\mathbb{S};2)$ are the stable homotopy 
groups of spheres. The group $\operatorname{TR}_0^1(\mathbb{S};2)$ is
isomorphic to $\mathbb{Z}_2$ generated by the multiplicative unit
element $\iota = [1]_1$; the group
$\operatorname{TR}_1^1(\mathbb{S};2)$ is isomorphic to
$\mathbb{Z}/2\mathbb{Z}$ generated by the Hopf class $\eta$; the group
$\operatorname{TR}_2^1(\mathbb{S};2)$ is isomorphic to
$\mathbb{Z}/2\mathbb{Z}$ generated by $\eta^2$; the group 
$\operatorname{TR}_3^1(\mathbb{S};2)$ is isomorphic to
$\mathbb{Z}/8\mathbb{Z}$ generated by the Hopf class $\nu$ and
$\eta^3 = 4\nu$; the groups $\operatorname{TR}_4^1(\mathbb{S};2)$ and
$\operatorname{TR}_5^1(\mathbb{S};2)$ are zero; the group
$\operatorname{TR}_6^1(\mathbb{S};2)$ is isomorphic to
$\mathbb{Z}/2\mathbb{Z}$ generated by $\nu^2$, and the group
$\operatorname{TR}_7^1(\mathbb{S};2)$ is isomorphic to
$\mathbb{Z}/16\mathbb{Z}$ generated the Hopf class $\sigma$. 

We consider the skeleton spectral sequence
$$E_{s,t}^2 = H_s(C_{2^{n-1}}, \operatorname{TR}_t^1(\mathbb{S};2))
\Rightarrow \mathbb{H}_{s+t}(C_{2^{n-1}}, T(\mathbb{S})).$$
This sequence may be identified with the Atiyah-Hirzebruch spectral
sequence that converges to the homotopy groups of the suspension
spectrum of the pointed space
$(BC_{2^{n-1}})_+$~\cite[Prop.~2.4]{greenleesmay}. Therefore, the
edge-homomorphism onto the line $s = 0$ has a retraction, and
hence, the differentials $d^r \colon E_{r,t}^r \to  E_{0,t+r-1}^r$ are
all zero.

Suppose first that $n = 2$. Then the $E^2$-term for $s+t \leqslant 7$
is takes the form
\begin{center}
\begin{tabular*}{0.95\textwidth}{@{\extracolsep{\fill}}cccccccc}
$\mathbb{Z}/16\mathbb{Z}$ &  &  &  &  &  &  &  \cr
$\mathbb{Z}/2\mathbb{Z}$ & $\mathbb{Z}/2\mathbb{Z}$ &  &  &  &  &  \cr
0 & 0 & 0 &  &  &  &  &  \cr
0 & 0 & 0 & 0 &  &  &  &  \cr
$\mathbb{Z}/8\mathbb{Z}$ & $\mathbb{Z}/2\mathbb{Z}$ &
$4\mathbb{Z}/8\mathbb{Z}$ & $\mathbb{Z}/2\mathbb{Z}$ &
$4\mathbb{Z}/8\mathbb{Z}$ &  &  & \cr
$\mathbb{Z}/2\mathbb{Z}$ & $\mathbb{Z}/2\mathbb{Z}$ &
$\mathbb{Z}/2\mathbb{Z}$ & $\mathbb{Z}/2\mathbb{Z}$ &
$\mathbb{Z}/2\mathbb{Z}$ & $\mathbb{Z}/2\mathbb{Z}$ &  &  \cr
$\mathbb{Z}/2\mathbb{Z}$ & $\mathbb{Z}/2\mathbb{Z}$ &
$\mathbb{Z}/2\mathbb{Z}$ & $\mathbb{Z}/2\mathbb{Z}$ &
$\mathbb{Z}/2\mathbb{Z}$ & $\mathbb{Z}/2\mathbb{Z}$ &
$\mathbb{Z}/2\mathbb{Z}$ &  \cr
$\mathbb{Z}_2$ & $\mathbb{Z}/2\mathbb{Z}$ &
$0$ & $\mathbb{Z}/2\mathbb{Z}$ &
$0$ & $\mathbb{Z}/2\mathbb{Z}$ &
$0$ & $\mathbb{Z}/2\mathbb{Z}$ \cr
\end{tabular*}
\end{center}
where $s$ is horizontal coordinate and $t$ the vertical
coordinate. The group $\smash{E_{s,0}^2}$ is generated by the class
$\iota z_s$, the group $\smash{E_{s,1}^2}$ by the class $\eta z_s$,
the group $\smash{E_{s,2}^2}$ by the class $\eta^2 z_s$, the group
$\smash{E_{s,3}^2}$ with $s = 0$ or $s$ an odd positive integer by the
class $\nu z_s$, the group $\smash{E_{s,3}^2}$ with $s$ an even
positive integer by the class $4\nu z_s$, the group
$\smash{E_{s,6}^2}$ by the class $\nu^2 z_s$, the group
$\smash{E_{s,7}^2}$ with $s = 0$ or $s$ an odd positive integer by the
class $\sigma z_s$, and the group $\smash{E_{s,7}^2}$ with $s$ an even
positive integer by the class $8\sigma z_s$, where $z_s$ are the
classes defined in Lemmas~\ref{homologyfinite}
and~\ref{homologyinfinite}. We recall from
Lemma~\ref{d^2-differential} that the $d^2$-differential is given by
Connes' operator and by multiplication by $\eta$. Since
Connes' operator on $\operatorname{TR}_*^1(\mathbb{S};2)$ is
zero, we find that the $\smash{E^3}$-term begins
\begin{center}
\begin{tabular*}{0.95\textwidth}{@{\extracolsep{\fill}}cccccccc}
$\mathbb{Z}/16\mathbb{Z}$ &  &  &  &  &  &  &  \cr
$\mathbb{Z}/2\mathbb{Z}$ & $\mathbb{Z}/2\mathbb{Z}$ &  &  &  &  &  \cr
0 & 0 & 0 &  &  &  &  &  \cr
0 & 0 & 0 & 0 &  &  &  &  \cr
$\mathbb{Z}/8\mathbb{Z}$ & $\mathbb{Z}/2\mathbb{Z}$ &
0 & $\mathbb{Z}/2\mathbb{Z}$ &
$4\mathbb{Z}/8\mathbb{Z}$ &  &  & \cr
$\mathbb{Z}/2\mathbb{Z}$ & $\mathbb{Z}/2\mathbb{Z}$ &
0 & 0 &
0 & $\mathbb{Z}/2\mathbb{Z}$ &  &  \cr
$\mathbb{Z}/2\mathbb{Z}$ & $\mathbb{Z}/2\mathbb{Z}$ &
$\mathbb{Z}/2\mathbb{Z}$ & 0 &
0 & 0 &
$\mathbb{Z}/2\mathbb{Z}$ &  \cr
$\mathbb{Z}_2$ & $\mathbb{Z}/2\mathbb{Z}$ &
0 & $\mathbb{Z}/2\mathbb{Z}$ &
0 & 0 &
0 & $\mathbb{Z}/2\mathbb{Z}$ \cr
\end{tabular*}
\end{center}
Since the differential $d^3 \colon E_{3,0}^3 \to E_{0,2}^3$ is zero,
the $E^3$-term is also the $E^4$-term. The following result is a
consequence of Mosher~\cite[Prop.~5.2]{mosher}.

\begin{lemma}\label{d^4-differential}Let $n$ be a positive
integer. Then, in the spectral sequence
$$E_{s,t}^2 = H_s(C_{2^{n-1}}, \operatorname{TR}_t^1(\mathbb{S};2))
\Rightarrow \mathbb{H}_{s+t}(C_{2^{n-1}}, T(\mathbb{S})),$$
the $d^4$-differential $\smash{d^4 \colon E_{s,t}^4 \to
E_{s-4,t+3}^4}$ is equal to the map of sub-quotients induced from the
map of group homology groups induced from multiplication by $\nu$, if
$s$ is congruent to $0$, $1$, $2$, $3$, $8$, $9$, $10$, or $11$ modulo
$16$, by $2\nu$, if $s$ is congruent to $6$, $7$, $12$, or $13$ modulo
$16$, and by $0$, if $s$ is congruent to $4$, $5$, $14$, or $15$
modulo $16$.
\qed
\end{lemma}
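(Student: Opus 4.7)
The plan is to reduce the lemma to Mosher's computation by first identifying the spectral sequence with a classical Atiyah--Hirzebruch spectral sequence. As already noted before the statement, the skeleton spectral sequence
$$E_{s,t}^2 = H_s(C_{2^{n-1}}, \operatorname{TR}_t^1(\mathbb{S};2)) \Rightarrow \mathbb{H}_{s+t}(C_{2^{n-1}}, T(\mathbb{S}))$$
may be identified, via \cite[Prop.~2.4]{greenleesmay}, with the Atiyah--Hirzebruch spectral sequence of the suspension spectrum $\Sigma^{\infty}(BC_{2^{n-1}})_+$ computing the stable homotopy groups $\pi_{s+t}^s((BC_{2^{n-1}})_+)$. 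Under this identification, the higher differentials $d^r$ are controlled by the secondary attaching maps in the standard CW filtration of $BC_{2^{n-1}}$, i.e., by the Toda brackets coming from the successive cell attachments.

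Next, I would invoke Mosher's Proposition~5.2, which computes exactly these secondary attaching maps at the prime $2$. Since $d^2$ and $d^3$ have already been described (and are given by Connes' operator plus $\eta$-multiplication, and zero on the relevant columns, respectively), the first potentially nontrivial $d^4$ takes values in the coefficient group $\pi_3^s(\mathbb{S}) = \mathbb{Z}/8\mathbb{Z}\cdot\nu$. Mosher's result gives the precise $\nu$-attaching coefficient between the $s$-cell and the $(s-4)$-cell of $BC_{2^{n-1}}$, which by the $8$-fold periodicity of the $\nu$-family in the image of the $J$-homomorphism at $2$ depends only on the residue of $s$ modulo $16$. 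This periodicity, combined with the alternation between odd-dimensional and even-dimensional cells of $BC_{2^{n-1}}$ (reflected in the standard resolution generators $x_s$ and $Nx_s$), produces the three cases listed in the statement.

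The last step is a translation of conventions: the generators $z_s$ in our setting are classes of $Nx_s\otimes 1$ (or $p^{r-(n-1)}Nx_s\otimes 1$), whereas Mosher works directly with the cell generators of lens-space complexes. On odd cells the identification is immediate, but on even cells the norm element introduces a factor of $2$ (or kills the class altogether when the relevant $\mathbb{Z}$-coefficient group is $2$-divisibly trivial), which explains the reduction from $\nu$ to $2\nu$ in the cases $s\equiv 6,7,12,13\pmod{16}$ and to $0$ in the cases $s\equiv 4,5,14,15\pmod{16}$. The outputs are then compared against the given formula case-by-case.

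The main obstacle is this translation of conventions: Mosher's statement is phrased for stunted (real or complex) projective spectra, while we need the version for $BC_{2^{n-1}}$, and the bookkeeping of factors of $2$ coming from the norm element $N$ in the standard resolution must be tracked carefully against the periodicity of $\nu\cdot(-)$ in the image of $J$. Once this dictionary is in place, the three-case formula falls out directly, which is presumably why the proof is left to a reference.
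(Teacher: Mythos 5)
The paper offers no proof of this lemma beyond the single sentence ``The following result is a consequence of Mosher,'' so your overall plan --- identifying the skeleton spectral sequence with the Atiyah--Hirzebruch spectral sequence for $\Sigma^\infty(BC_{2^{n-1}})_+$ (already noted in the paper via Greenlees--May) and then reducing to Mosher's computation --- follows the same route.

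Your sketch of the translation step, however, contains a concrete error. Inspect the claimed pattern: the coefficient is $\nu$ for $s\equiv 0,1,2,3,8,9,10,11$, is $2\nu$ for $s\equiv 6,7,12,13$, and is $0$ for $s\equiv 4,5,14,15$ modulo $16$. Each of these three sets contains both even and odd residues; for instance $s\equiv 6$ and $s\equiv 7$ both give $2\nu$, and $s\equiv 4$ and $s\equiv 5$ both give $0$. So the answer is constant on the pairs $(2k,2k+1)$ and is governed by $k=\lfloor s/2\rfloor$ modulo $8$, not by the parity of $s$. Your proposed mechanism --- ``on even cells the norm element introduces a factor of $2$'' --- would instead produce a parity-dependent pattern, which is inconsistent with the lemma. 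The correct way to see the pairing is through the fibration $S^1\to BC_{2^{n-1}}\to B\mathbb{T}=\mathbb{CP}^\infty$: the cells of $BC_{2^{n-1}}$ in dimensions $2k$ and $2k+1$ both sit over the single $2k$-cell of $\mathbb{CP}^\infty$, and Mosher's period-$8$ pattern in $k$ therefore pulls back to a period-$16$ pattern in $s$ that is constant on each pair $(2k,2k+1)$. Relatedly, you write that ``Mosher works directly with the cell generators of lens-space complexes,'' but Mosher's paper treats \emph{complex projective} space; the passage from $\mathbb{CP}^\infty$ to $BC_{2^{n-1}}$ is exactly the nontrivial translation, and it is not a matter of bookkeeping factors of $2$ coming from the norm element in the standard resolution.
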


In the case at hand, we find that the $d^4$-differential is zero, for 
$s+t \leqslant 7$. For degree reasons, the only possible higher
non-zero differential all have target on the fiber line $s = 0$.
However, we argued above that these differentials are zero. Therefore,
for $s+t \leqslant 7$, the $\smash{E^3}$-term is also the
$\smash{E^{\infty}}$-term.

The $E^2$-term of the skeleton spectral sequence for
$\mathbb{H}_q(C_4,T(\mathbb{S}))$ for $s+t \leqslant 7$ is
\begin{center}
\begin{tabular*}{0.95\textwidth}{@{\extracolsep{\fill}}cccccccc}
$\mathbb{Z}/16\mathbb{Z}$ &  &  &  &  &  &  &  \cr
$\mathbb{Z}/2\mathbb{Z}$ & $\mathbb{Z}/2\mathbb{Z}$ &  &  &  &  &  \cr
0 & 0 & 0 &  &  &  &  &  \cr
0 & 0 & 0 & 0 &  &  &  &  \cr
$\mathbb{Z}/8\mathbb{Z}$ & $\mathbb{Z}/4\mathbb{Z}$ &
$2\mathbb{Z}/8\mathbb{Z}$ & $\mathbb{Z}/4\mathbb{Z}$ &
$2\mathbb{Z}/8\mathbb{Z}$ &  &  & \cr
$\mathbb{Z}/2\mathbb{Z}$ & $\mathbb{Z}/2\mathbb{Z}$ &
$\mathbb{Z}/2\mathbb{Z}$ & $\mathbb{Z}/2\mathbb{Z}$ &
$\mathbb{Z}/2\mathbb{Z}$ & $\mathbb{Z}/2\mathbb{Z}$ &  &  \cr
$\mathbb{Z}/2\mathbb{Z}$ & $\mathbb{Z}/2\mathbb{Z}$ &
$\mathbb{Z}/2\mathbb{Z}$ & $\mathbb{Z}/2\mathbb{Z}$ &
$\mathbb{Z}/2\mathbb{Z}$ & $\mathbb{Z}/2\mathbb{Z}$ &
$\mathbb{Z}/2\mathbb{Z}$ &  \cr
$\mathbb{Z}_2$ & $\mathbb{Z}/4\mathbb{Z}$ &
$0$ & $\mathbb{Z}/4\mathbb{Z}$ &
$0$ & $\mathbb{Z}/4\mathbb{Z}$ &
$0$ & $\mathbb{Z}/4\mathbb{Z}$ \cr
\end{tabular*}
\end{center}
The generators of the groups $\smash{E_{s,t}^2}$ are as before with
exception that the groups $\smash{E_{s,3}^3}$ and $\smash{E_{s,7}^2}$
with $s$ an even positive integer are generated by $2\nu z_s$ and
$4\sigma z_s$, respectively. We find as before that the
$\smash{E^3}$-term for $s + t \leqslant 7$ takes
the form
\begin{center}
\begin{tabular*}{0.95\textwidth}{@{\extracolsep{\fill}}cccccccc}
$\mathbb{Z}/16\mathbb{Z}$ &  &  &  &  &  &  &  \cr
$\mathbb{Z}/2\mathbb{Z}$ & $\mathbb{Z}/2\mathbb{Z}$ &  &  &  &  &  \cr
0 & 0 & 0 &  &  &  &  &  \cr
0 & 0 & 0 & 0 &  &  &  &  \cr
$\mathbb{Z}/8\mathbb{Z}$ & $\mathbb{Z}/4\mathbb{Z}$ &
$2\mathbb{Z}/4\mathbb{Z}$ & $\mathbb{Z}/4\mathbb{Z}$ &
$2\mathbb{Z}/8\mathbb{Z}$ &  &  & \cr
$\mathbb{Z}/2\mathbb{Z}$ & $\mathbb{Z}/2\mathbb{Z}$ &
0 & 0 &
0 & $\mathbb{Z}/2\mathbb{Z}$ &  &  \cr
$\mathbb{Z}/2\mathbb{Z}$ & $\mathbb{Z}/2\mathbb{Z}$ &
$\mathbb{Z}/2\mathbb{Z}$ & 0 &
0 & 0 &
$\mathbb{Z}/2\mathbb{Z}$ &  \cr
$\mathbb{Z}_2$ & $\mathbb{Z}/4\mathbb{Z}$ &
$0$ & $\mathbb{Z}/4\mathbb{Z}$ &
$0$ & $2\mathbb{Z}/4\mathbb{Z}$ &
$0$ & $\mathbb{Z}/4\mathbb{Z}$ \cr
\end{tabular*}
\end{center}
The only possible non-zero $d^3$-differential for $s+t \leqslant 7$ is 
$\smash{d^3 \colon E_{6,1}^3 \to E_{3,3}^3}$. Since the corresponding
differential in the previous spectral sequence is zero, a comparison
by using the Verschiebung map shows that also this differential is
zero. The $\smash{d^4}$-differentials are given by
Lemma~\ref{d^4-differential}. Hence, the $\smash{E^5}$-term begins
\begin{center}
\begin{tabular*}{0.95\textwidth}{@{\extracolsep{\fill}}cccccccc}
$\mathbb{Z}/16\mathbb{Z}$ &  &  &  &  &  &  &  \cr
$\mathbb{Z}/2\mathbb{Z}$ & $\mathbb{Z}/2\mathbb{Z}$ &  &  &  &  &  \cr
0 & 0 & 0 &  &  &  &  &  \cr
0 & 0 & 0 & 0 &  &  &  &  \cr
$\mathbb{Z}/8\mathbb{Z}$ & $\mathbb{Z}/4\mathbb{Z}$ &
$2\mathbb{Z}/4\mathbb{Z}$ & $\mathbb{Z}/2\mathbb{Z}$ &
$2\mathbb{Z}/8\mathbb{Z}$ &  &  & \cr
$\mathbb{Z}/2\mathbb{Z}$ & $\mathbb{Z}/2\mathbb{Z}$ &
0 & 0 &
0 & $\mathbb{Z}/2\mathbb{Z}$ &  &  \cr
$\mathbb{Z}/2\mathbb{Z}$ & $\mathbb{Z}/2\mathbb{Z}$ &
$\mathbb{Z}/2\mathbb{Z}$ & 0 &
0 & 0 &
$\mathbb{Z}/2\mathbb{Z}$ &  \cr
$\mathbb{Z}_2$ & $\mathbb{Z}/4\mathbb{Z}$ &
$0$ & $\mathbb{Z}/4\mathbb{Z}$ &
$0$ & $2\mathbb{Z}/4\mathbb{Z}$ &
$0$ & $2\mathbb{Z}/4\mathbb{Z}$ \cr
\end{tabular*}
\end{center}
We see as before that the $E^5$-term is also the $E^{\infty}$-term.

Finally, we consider the skeleton spectral sequence for
$\mathbb{H}_q(C_{2^{n-1}},T(\mathbb{S}))$, where $n \geqslant 4$. The
$\smash{E^2}$-term for $s+t \leqslant 7$, takes the form 
\begin{center}
\begin{tabular*}{0.95\textwidth}{@{\extracolsep{\fill}}cccccccc}
$\mathbb{Z}/16\mathbb{Z}$ &  &  &  &  &  &  &  \cr
$\mathbb{Z}/2\mathbb{Z}$ & $\mathbb{Z}/2\mathbb{Z}$ &  &  &  &  &  \cr
0 & 0 & 0 &  &  &  &  &  \cr
0 & 0 & 0 & 0 &  &  &  &  \cr
$\mathbb{Z}/8\mathbb{Z}$ & $\mathbb{Z}/8\mathbb{Z}$ &
$\mathbb{Z}/8\mathbb{Z}$ & $\mathbb{Z}/8\mathbb{Z}$ &
$\mathbb{Z}/8\mathbb{Z}$ &  &  & \cr
$\mathbb{Z}/2\mathbb{Z}$ & $\mathbb{Z}/2\mathbb{Z}$ &
$\mathbb{Z}/2\mathbb{Z}$ & $\mathbb{Z}/2\mathbb{Z}$ &
$\mathbb{Z}/2\mathbb{Z}$ & $\mathbb{Z}/2\mathbb{Z}$ &  &  \cr
$\mathbb{Z}/2\mathbb{Z}$ & $\mathbb{Z}/2\mathbb{Z}$ &
$\mathbb{Z}/2\mathbb{Z}$ & $\mathbb{Z}/2\mathbb{Z}$ &
$\mathbb{Z}/2\mathbb{Z}$ & $\mathbb{Z}/2\mathbb{Z}$ &
$\mathbb{Z}/2\mathbb{Z}$ &  \cr
$\mathbb{Z}_2$ & $\mathbb{Z}/2^{n-1}\mathbb{Z}$ &
$0$ & $\mathbb{Z}/2^{n-1}\mathbb{Z}$ &
$0$ & $\mathbb{Z}/2^{n-1}\mathbb{Z}$ &
$0$ & $\mathbb{Z}/2^{n-1}\mathbb{Z}$ \cr
\end{tabular*}
\end{center}
The generators of the groups $\smash{E_{s,t}^2}$ are the same as in the
skeleton spectral sequence for $\mathbb{H}_q(C_2,T(\mathbb{S}))$ with
the exception that the groups $\smash{E_{s,3}^2}$ and $\smash{E_{s,7}^2}$
are generated by the classes $\nu z_s$ and $\sigma z_s$, respectively,
for all $s \geqslant 0$. The $\smash{d^2}$-differential is given by
Lemma~\ref{d^2-differential}. We find that the $\smash{E^3}$-term for
$s+t \leqslant 7$ becomes
\begin{center}
\begin{tabular*}{0.95\textwidth}{@{\extracolsep{\fill}}cccccccc}
$\mathbb{Z}/16\mathbb{Z}$ &  &  &  &  &  &  &  \cr
$\mathbb{Z}/2\mathbb{Z}$ & $\mathbb{Z}/2\mathbb{Z}$ &  &  &  &  &  \cr
0 & 0 & 0 &  &  &  &  &  \cr
0 & 0 & 0 & 0 &  &  &  &  \cr
$\mathbb{Z}/8\mathbb{Z}$ & $\mathbb{Z}/8\mathbb{Z}$ &
$\mathbb{Z}/4\mathbb{Z}$ & $\mathbb{Z}/4\mathbb{Z}$ &
$\mathbb{Z}/8\mathbb{Z}$ &  &  & \cr
$\mathbb{Z}/2\mathbb{Z}$ & $\mathbb{Z}/2\mathbb{Z}$ &
0 & 0 &
0 & 0 &  &  \cr
$\mathbb{Z}/2\mathbb{Z}$ & $\mathbb{Z}/2\mathbb{Z}$ &
$\mathbb{Z}/2\mathbb{Z}$ & 0 &
0 & 0 &
$\mathbb{Z}/2\mathbb{Z}$ &  \cr
$\mathbb{Z}_2$ & $\mathbb{Z}/2^{n-1}\mathbb{Z}$ &
$0$ & $\mathbb{Z}/2^{n-1}\mathbb{Z}$ &
$0$ & $2\mathbb{Z}/2^{n-1}\mathbb{Z}$ &
$0$ & $\mathbb{Z}/2^{n-1}\mathbb{Z}$ \cr
\end{tabular*}
\end{center}
A comparison with the previous spectral sequence by using the
Verschiebung map shows that the $d^3$-differential is zero. The
$\smash{d^4}$-differential is given by
Lemma~\ref{d^4-differential}. Hence, the $\smash{E^5}$-term for $s+t
\leqslant 7$ becomes
\begin{center}
\begin{tabular*}{0.95\textwidth}{@{\extracolsep{\fill}}cccccccc}
$\mathbb{Z}/16\mathbb{Z}$ &  &  &  &  &  &  &  \cr
$\mathbb{Z}/2\mathbb{Z}$ & $\mathbb{Z}/2\mathbb{Z}$ &  &  &  &  &  \cr
0 & 0 & 0 &  &  &  &  &  \cr
0 & 0 & 0 & 0 &  &  &  &  \cr
$\mathbb{Z}/8\mathbb{Z}$ & $\mathbb{Z}/8\mathbb{Z}$ &
$\mathbb{Z}/4\mathbb{Z}$ & $\mathbb{Z}/2\mathbb{Z}$ &
$\mathbb{Z}/8\mathbb{Z}$ &  &  & \cr
$\mathbb{Z}/2\mathbb{Z}$ & $\mathbb{Z}/2\mathbb{Z}$ &
0 & 0 &
0 & 0 &  &  \cr
$\mathbb{Z}/2\mathbb{Z}$ & $\mathbb{Z}/2\mathbb{Z}$ &
$\mathbb{Z}/2\mathbb{Z}$ & 0 &
0 & 0 &
$\mathbb{Z}/2\mathbb{Z}$ &  \cr
$\mathbb{Z}_2$ & $\mathbb{Z}/2^{n-1}\mathbb{Z}$ &
$0$ & $\mathbb{Z}/2^{n-1}\mathbb{Z}$ &
$0$ & $2\mathbb{Z}/2^{n-1}\mathbb{Z}$ &
$0$ & $2\mathbb{Z}/2^{n-1}\mathbb{Z}$ \cr
\end{tabular*}
\end{center}
and, for $s+t \leqslant 7$, this is also the $E^{\infty}$-term.

\begin{lemma}\label{generatorsxi} (i) There exists unique homotopy
classes 
$$\xi_{1,n-1} \in \mathbb{H}_1(C_{2^{n-1}},T(\mathbb{S})) \hskip5mm
(n \geqslant 1)$$
such that $\xi_{1,n-1}$ represents $\smash{\iota z_1 \in
E_{1,0}^{\infty}}$, $F(\xi_{1,n-1}) = \xi_{1,n-2}$, and
$\xi_{1,0} = \eta$.

(ii) There exists unique homotopy classes
$$\xi_{3,n-1} \in \mathbb{H}_3(C_{2^{n-1}},T(\mathbb{S})) \hskip5mm
(n \geqslant 1)$$
such that $\xi_{3,n-1}$ represents $\smash{\iota z_3 \in
E_{3,0}^{\infty}}$, $F(\xi_{3,n-1}) = \xi_{3,n-2}$, and $\xi_{3,0} = \nu$.

(iii) There exists unique homotopy classes
$$\xi_{5,n-1} \in \mathbb{H}_5(C_{2^{n-1}},T(\mathbb{S})) \hskip5mm
(n \geqslant 1)$$
such that $\xi_{5,n-1}$ represents $\smash{2\iota z_5 \in
  E_{5,0}^{\infty}}$ and $F(\xi_{5,n-1}) = \xi_{5,n-2}$.
\end{lemma}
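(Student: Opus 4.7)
I would prove parts (i), (ii), and (iii) by induction on $n \geqslant 1$, lifting step by step through the Frobenius tower of $\mathbb{H}_i(C_{2^{n-1}}, T(\mathbb{S}))$ using the skeleton spectral sequence calculations just displayed.

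The base case $n = 1$ is immediate from the identification $\mathbb{H}_q(C_1, T(\mathbb{S})) = \pi_q(\mathbb{S})$: in (i) the class $\xi_{1,0} = \eta$ is the specified Hopf generator of $\pi_1(\mathbb{S})$, in (ii) the class $\xi_{3,0} = \nu$ is the specified Hopf generator of $\pi_3(\mathbb{S}) = \mathbb{Z}/8\mathbb{Z}$, and in (iii) the vanishing of $\pi_5(\mathbb{S})$ forces $\xi_{5,0} = 0$. For the inductive step $n-1 \to n$, the designated element of $E_{i,0}^{\infty}$---namely $\iota z_1$, $\iota z_3$, or $2\iota z_5$---is a nonzero class in the $E^{\infty}$-page read off from the corresponding table above, and its preimage in $\mathbb{H}_i(C_{2^{n-1}}, T(\mathbb{S}))$ is a torsor under the filtration-$0$ subgroup $F_0 \mathbb{H}_i = E_{0,i}^{\infty}$, whose order is again read from the table.

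I would exhibit $\xi_{i,n-1}$ by first choosing any lift $\tilde{\xi}$ of the designated class, computing its Frobenius image $F(\tilde{\xi})$, and then correcting $\tilde{\xi}$ by an element of $F_0 \mathbb{H}_i(C_{2^{n-1}}, T(\mathbb{S}))$ to arrange that $F(\tilde{\xi}) = \xi_{i,n-2}$ exactly. Existence of the correction reduces to showing that the discrepancy $F(\tilde{\xi}) - \xi_{i,n-2}$ lies in the image of $F$ restricted to $F_0$; uniqueness reduces to showing that the kernel of the same restricted Frobenius is controlled by the inductive hypothesis. Both statements combine the description of $F$ on group homology provided by Lemma~\ref{homologyfinite}(iii) with a careful tracking of the extensions in the skeleton filtration on $\mathbb{H}_i$.

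The main obstacle is the uniqueness assertion. The map induced by $F$ on the associated graded $E_{0,i}^{\infty}$ is the transfer in group homology of Lemma~\ref{homologyfinite}(iii), and on a $2$-torsion piece such as $E_{0,1}^{\infty} = \mathbb{Z}/2\mathbb{Z}$ this transfer is multiplication by two and therefore zero. Consequently, the associated graded of $F$ does not by itself separate two a~priori distinct lifts of $\iota z_i$. The resolution I would pursue is to work with the filtered group $\mathbb{H}_i(C_{2^{n-1}}, T(\mathbb{S}))$ itself and to propagate the inductive constraint through the iterated Frobenius $F^{n-1} \colon \mathbb{H}_i(C_{2^{n-1}}, T(\mathbb{S})) \to \mathbb{H}_i(C_1, T(\mathbb{S})) = \pi_i(\mathbb{S})$ all the way to the base case; there the two candidate lifts are forced to produce distinct images, and only one fits the prescribed value of $\xi_{i,0}$. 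This chain-of-constraints argument, together with the filtration analysis above, completes the construction uniquely.
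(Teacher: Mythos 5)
Your inductive existence strategy---lift through the Frobenius tower, correcting by the filtration-zero piece---is reasonable, and you correctly identify the central difficulty: the transfer on the $2$-torsion associated graded pieces is multiplication by $2$ and hence zero, so the associated graded of $F$ alone cannot separate the candidate lifts of $\iota z_i$. But the resolution you then propose---"propagate the inductive constraint through the iterated Frobenius $F^{n-1}$ to the base case, where the two candidate lifts produce distinct images"---is false in precisely the cases where the difficulty is present, namely parts (i) and (iii). In degree $1$ the two candidate lifts of $\iota z_1$ in $\mathbb{H}_1(C_{2^{n-1}},T(\mathbb{S}))$ differ by $V^{n-1}(\eta)$, and $F^{n-1}(V^{n-1}(\eta)) = 2^{n-1}\eta = 0$ in $\pi_1(\mathbb{S}) = \mathbb{Z}/2\mathbb{Z}$; both candidates therefore map to the \emph{same} element at the base, and $F^{n-1}$ does not distinguish them. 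The same failure occurs in degree $5$, where the ambiguity lives in a copy of $\mathbb{Z}/4\mathbb{Z}$ on which the transfer is again multiplication by $2$. Your proposed resolution does work for part (ii): there the ambiguity is $dV^{n-1}(\eta^2)$, and the relation $FdV = d + \eta$ gives $F^{n-1}dV^{n-1}(\eta^2) = \eta^3 = 4\nu \neq 0$, so the iterated Frobenius does separate the two candidates---and indeed this is exactly the argument the paper uses for (ii).

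For (i) and (iii) the correct resolution is of a different nature and is what the paper's inverse-limit argument encapsulates. If $\{\xi_{1,n-1}\}$ and $\{\xi_{1,n-1}'\}$ are two $F$-compatible systems both representing $\iota z_1$, their difference $\delta_{n-1} = \xi_{1,n-1} - \xi_{1,n-1}'$ lies in $E_{0,1}^{\infty} = \mathbb{Z}/2\mathbb{Z}\cdot V^{n-1}(\eta)$ for each $n$, and $F$-compatibility forces $\delta_{n-2} = F(\delta_{n-1})$. Since $F$ vanishes on this filtration piece, $\delta_{n-2} = 0$ for all $n$; uniqueness follows at once. Notice that this is an argument at each finite level, not one that passes to the base: the nilpotence of $F$ on the ambiguous filtration piece kills the ambiguity one stage down, rather than detecting it at $n=1$. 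Equivalently, the paper passes to $\lim_F$ of the skeleton spectral sequences; there the even-$s$ columns vanish (because $F(z_s)=2z_s$), so the limit group in degree $1$ (resp.\ $5$) is $\mathbb{Z}_2$ and the compatible system lifting $\iota z_1$ (resp.\ $2\iota z_5$) is manifestly unique. Without this correction your uniqueness argument has a genuine gap.
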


\begin{proof}We consider the inverse limit with respect to the
Frobenius maps of the skeleton spectral sequences for
$\mathbb{H}_q(C_{2^{n-1}},T(\mathbb{S}))$. By
Lemmas~\ref{homologyfinite} and~\ref{homologyinfinite}, the map of
spectral sequences induced by the Frobenius map is given, formally, by
$F(z_s) = 2z_s$, if $s$ is even, and $F(z_s) = z_s$, if $s$ is
odd. Hence, the $\smash{E^{\infty}}$-term of the inverse limit
spectral sequence for $s + t \leqslant 7$ takes the form
\begin{center}
\begin{tabular*}{0.95\textwidth}{@{\extracolsep{\fill}}cccccccc}
0 &  &  &  &  &  &  &  \cr
0 & $\mathbb{Z}/2\mathbb{Z}$ &  &  &  &  &  \cr
0 & 0 & 0 &  &  &  &  &  \cr
0 & 0 & 0 & 0 &  &  &  &  \cr
0 & $\mathbb{Z}/8\mathbb{Z}$ &
0 & $\mathbb{Z}/2\mathbb{Z}$ &
0 &  &  & \cr
0 & $\mathbb{Z}/2\mathbb{Z}$ &
0 & 0 &
0 & 0 &  &  \cr
0 & $\mathbb{Z}/2\mathbb{Z}$ &
0 & 0 &
0 & 0 &
0 &  \cr
0 & $\mathbb{Z}_2$ &
$0$ & $\mathbb{Z}_2$ &
$0$ & $2\mathbb{Z}_2$ &
$0$ & $2\mathbb{Z}_2$ \cr
\end{tabular*}
\end{center}
We now prove the statement~(i). There is a unique class
$$\xi_1 = \{ \xi_{1,n-1} \} \in \operatornamewithlimits{lim}_F
\mathbb{H}_1(C_{2^{n-1}},T(\mathbb{S}))$$
such that $\xi_{1,n-1}$ represents the generator
$\smash{\iota z_1 \in E_{1,0}^{\infty}}$, for all $n \geqslant 2$.
We can write
$$\xi_{1,n-1} = a_{n-1} dV^{n-1}(1) + b_{n-1} V^{n-1}(\eta),$$
where $a_{n-1} \in \mathbb{Z}/2^{n-1}\mathbb{Z}$ and $b_{n-1} \in
\mathbb{Z}/2\mathbb{Z}$. Since the class $\xi_{1,n-1}$ represents
$\iota z_1$, the proof of~\cite[Prop.~4.4.1]{hm4} shows that
$a_{n-1} = 1$. The calculation
$$\xi_{1,n-1} = F(\xi_{1,n}) = F(dV^n(1)+b_{n-1}V^n(\eta)) =
dV^{n-1}(1) + V^{n-1}(\eta)$$
shows that also $b_{n-1} = 1$. Finally,
$$\xi_{1,0} = F(\xi_{1,1}) = F(dV(1) + V(\eta)) = \eta$$
which proves~(i). To prove~(ii), we must show that there is a unique
class
$$\xi_3 = \{\xi_{3,n-1}\} \in \operatornamewithlimits{lim}_F
\mathbb{H}_3(C_{2^{n-1}},T(\mathbb{S}))$$
such that $\xi_{3,n-1}$ represents $\iota z_3$ and such that
$\xi_{3,0} = \nu$. There are two classes $\xi_3$ and $\xi_3'$ that
satisfy the first requirement and
$$\xi_{3,n-1} - \xi_{3,n-1}' = dV^{n-1}(\eta^2).$$
Moreover, if $n \geqslant 3$, then $F^{n-1} \colon
\mathbb{H}_3(C_{2^{n-1}},T(\mathbb{S})) \to
\operatorname{TR}_3^1(\mathbb{S};2)$ induces a map
$$\overline{F^{n-1}} \colon H_3(C_{2^{n-1}},\operatorname{TR}_0^1(\mathbb{S};2))
\to \operatorname{TR}_3^1(\mathbb{S};2) / 4
\operatorname{TR}_3^1(\mathbb{S};2).$$
Indeed, $F^{n-1}V^{n-1}(\nu) = 2^{n-1}\nu$ and
$F^{n-1}dV^{n-1}(\eta^2) = \eta^3 = 4\nu$. The map
$\overline{F^{n-1}}$ is surjective by~\cite[Table IV]{whitehead}. One
readily verifies that it maps the generator $\iota z_3$ to the modulo
$4$ reduction $\bar{\nu}$ of the Hopf class $\nu$. Hence, $F^{n-1}$
maps one of the classes $\xi_{3,n-1}$ and $\xi_{3,n-1}'$ to $\nu$ and
the other class to $5\nu$. The statement~(ii) follows.

Finally, the statement~(iii) follows immediately from the inverse
limit of the spectral sequences displayed above.
\qed
\end{proof}

The group $\mathbb{H}_5(C_8, T(\mathbb{S}))$ is equal to the direct
sum of the subgroup generated by the class $\xi_{5,3}$ and a cyclic
group. We choose a generator $\rho$ this cyclic group.

\begin{proposition}\label{orbitssphere}The groups
$\mathbb{H}_q(C_{2^{n-1}}, T(\mathbb{S}))$ with $q \leqslant 5$ are
given by
$$\begin{aligned}
\mathbb{H}_0(C_{2^{n-1}},T(\mathbb{S})) & = 
\mathbb{Z}_2 \cdot V^{n-1}(\iota) \cr
\mathbb{H}_1(C_{2^{n-1}}, T(\mathbb{S})) & = \begin{cases}
\mathbb{Z}/2\mathbb{Z} \cdot \eta & \hskip1.8mm (n = 1) \cr
\mathbb{Z}/2^{n-1}\mathbb{Z} \cdot \xi_{1,n-1} \oplus
\mathbb{Z}/2\mathbb{Z} \cdot V^{n-1}(\eta) & \hskip1.8mm (n \geqslant 2) \cr
\end{cases} \cr
\mathbb{H}_2(C_{2^{n-1}}, T(\mathbb{S})) & = \begin{cases}
\mathbb{Z}/2\mathbb{Z} \cdot \eta^2 & \hskip2.7mm (n = 1) \cr
\mathbb{Z}/2\mathbb{Z} \cdot \eta \xi_{1,n-1} \oplus
\mathbb{Z}/2\mathbb{Z} \cdot V^{n-1}(\eta^2) & \hskip2.7mm (n \geqslant 2) \cr
\end{cases} \cr
\mathbb{H}_3(C_{2^{n-1}}, T(\mathbb{S})) & = \begin{cases}
\mathbb{Z}/8\mathbb{Z} \cdot \nu & \hskip4.8mm (n = 1) \cr
\mathbb{Z}/8\mathbb{Z} \cdot \xi_{3,1} \oplus
\mathbb{Z}/8\mathbb{Z} \cdot V(\nu) & \hskip4.8mm (n = 2) \cr
\mathbb{Z}/2^n \mathbb{Z} \cdot \xi_{3,n-1} \oplus
\mathbb{Z}/2\mathbb{Z} \cdot \eta^2\xi_{1,n-1} & \hskip4.8mm (n \geqslant 3) \cr
\hskip20.8mm \oplus \hskip.4mm \mathbb{Z}/8\mathbb{Z} \cdot V^{n-1}(\nu) & \cr
\end{cases} \cr
\mathbb{H}_4(C_{2^{n-1}}, T(\mathbb{S})) & = \begin{cases}
\mathbb{Z}/2^{n-1}\mathbb{Z} \cdot \nu \xi_{1,n-1} & \hskip26mm (n
\leqslant 3) \cr
\mathbb{Z}/8\mathbb{Z} \cdot \nu \xi_{1,n-1} & \hskip26mm (n \geqslant
4) \cr \end{cases} \cr
\mathbb{H}_5(C_{2^{n-1}}, T(\mathbb{S})) & = \begin{cases}
0 & \hskip1.8mm (n \leqslant 2) \cr
\mathbb{Z}/4\mathbb{Z} \cdot \xi_{5,2} & \hskip1.8mm (n = 3) \cr
\mathbb{Z}/2^{n-1}\mathbb{Z} \cdot \xi_{5,n-1} \oplus
\mathbb{Z}/2\mathbb{Z} \cdot V^{n-4}(\rho) & \hskip1.8mm
(n \geqslant 4) \cr \end{cases} \cr
\end{aligned}$$
In addition, $F(\xi_{q,n-1}) = \xi_{q,n-2}$, where $\xi_{1,0} = \eta$
and $\xi_{3,0} = \nu$, and $F(\rho) = 0$.
\end{proposition}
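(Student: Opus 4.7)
The proof consists of reading the additive structure of each group $\mathbb{H}_q(C_{2^{n-1}}, T(\mathbb{S}))$ from the three $E^\infty$-pages computed above (for $n = 2$, $n = 3$, and $n \geqslant 4$), identifying explicit generators, and resolving the extension problems. The case $n = 1$ is immediate because $\mathbb{H}_q(C_1, T(\mathbb{S})) = \pi_q(\mathbb{S})$, and these groups are the low-dimensional $2$-primary stable stems listed at the start of the section. For $n \geqslant 2$, the fiber line $s = 0$ splits off as a direct summand because the edge homomorphism onto this line admits a retraction (as already noted in the text); I would identify these summands with the corestriction classes $V^{n-1}(\iota)$, $V^{n-1}(\eta)$, $V^{n-1}(\eta^2)$, $V^{n-1}(\nu)$. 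The classes $\xi_{1,n-1}, \xi_{3,n-1}, \xi_{5,n-1}$ from Lemma~\ref{generatorsxi} detect the $E^\infty$-generators on the lines $s = 1, 3, 5$, and the $\operatorname{TR}_*^1(\mathbb{S};2)$-module structure produces the products $\eta\xi_{1,n-1}, \eta^2\xi_{1,n-1}, \nu\xi_{1,n-1}$ detecting the remaining generators at positions $(1,1)$, $(1,2)$, $(1,3)$.

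The main obstacle is the hidden-extension problem for $\mathbb{H}_3(C_{2^{n-1}}, T(\mathbb{S}))$, where the $E^\infty$-page has multiple non-trivial columns at total degree $3$ but the proposition asserts that $\xi_{3,n-1}$ generates a cyclic summand of order $2^n$ (for $n \geqslant 3$) or $8$ (for $n = 2$), absorbing all but the $(0,3)$ contribution $V^{n-1}(\nu)$ and, for $n \geqslant 3$, the $(1,2)$ contribution $\eta^2\xi_{1,n-1}$. My plan is to prove this by combining three inputs: (i) the identity $F^{n-1}(\xi_{3,n-1}) = \nu$ from Lemma~\ref{generatorsxi}, which guarantees that $\xi_{3,n-1}$ has order at least $8$; (ii) the behaviour of the Frobenius on the $E^\infty$-pieces described in Lemma~\ref{homologyfinite}, together with $F(V^{n-1}(\nu)) = 2\,V^{n-2}(\nu)$, which forces the hidden extensions from column $(3,0)$ to columns $(2,1)$ and $(1,2)$ to be as strong as possible; and (iii) induction on $n$ via $F(\xi_{3,n-1}) = \xi_{3,n-2}$, propagating the order computation upward from the base case $n = 2$, where the $E^\infty$-contributions at $(3,0)$, $(2,1)$, $(1,2)$ are each $\mathbb{Z}/2$ and must combine into the single $\mathbb{Z}/8$-summand generated by $\xi_{3,1}$. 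An entirely analogous Frobenius argument using $\xi_{5,n-1}$ handles the extension inside $\mathbb{H}_5(C_{2^{n-1}}, T(\mathbb{S}))$ for $n \geqslant 4$, showing that $\xi_{5,n-1}$ has order $2^{n-1}$ rather than the $E^\infty$-value $2^{n-2}$.

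The Frobenius compatibilities $F(\xi_{q,n-1}) = \xi_{q,n-2}$ are immediate from Lemma~\ref{generatorsxi}. For the remaining normalization $F(\rho) = 0$: the class $\rho$ was chosen only up to a generator of the complementary cyclic group in $\mathbb{H}_5(C_8, T(\mathbb{S}))$, and since $F(\xi_{5,3}) = \xi_{5,2}$ generates $\mathbb{H}_5(C_4, T(\mathbb{S})) = \mathbb{Z}/4\cdot\xi_{5,2}$, any a~priori value $F(\rho) \in \mathbb{H}_5(C_4, T(\mathbb{S}))$ can be cancelled by subtracting an appropriate multiple of $\xi_{5,3}$ from the original choice of $\rho$ without disturbing the direct-sum decomposition; then $V^{n-4}$ applied to the normalized $\rho$ gives the generators listed for $n \geqslant 5$, with their Frobenius behaviour inherited from the base case by the identity $FV = 2$.
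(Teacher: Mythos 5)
Your inductive step for the hidden extensions in $\mathbb{H}_3$ and $\mathbb{H}_5$ does not work, and this is the heart of the proposition. Propagating the order of $\xi_{3,n-1}$ upward via $F(\xi_{3,n-1}) = \xi_{3,n-2}$ only yields, for $n \geqslant 4$, the bound $\operatorname{ord}(\xi_{3,n-1}) \geqslant \operatorname{ord}(\xi_{3,n-2}) = 2^{n-1}$, which is already the order of the $E^\infty$-representative $\iota z_3 \in E^\infty_{3,0}$; it says nothing about the hidden extension over $\eta z_2 \in E^\infty_{2,1}$ needed to reach $2^n$. Frobenius compatibility is in fact blind to positive even filtration: by Lemma~\ref{homologyfinite}~(iii), $F(z_2) = 2z_2$, so $F$ annihilates the $(2,1)$-piece. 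The paper therefore uses the \emph{Verschiebung}, exploiting $V(z_2) = z_2$ from Lemma~\ref{homologyfinite}~(iv): since $2^{n-2}\xi_{3,n-2}$ is represented by $\eta z_2$, the class $V(2^{n-2}\xi_{3,n-2}) = 2^{n-2}V(\xi_{3,n-2})$ is again represented by $\eta z_2$, hence nonzero, and then one compares $V(\xi_{3,n-2})$ with $2\xi_{3,n-1}$. Your appeal to $F(V^{n-1}(\nu)) = 2V^{n-2}(\nu)$ as ``forcing'' the extensions is not an argument and does not substitute for this. The ``entirely analogous Frobenius argument'' you invoke for $\mathbb{H}_5$ fails for the same reason; the paper there uses a colimit over $V$, the isomorphism $\operatorname{Ext}(\mathbb{Q}_2/\mathbb{Z}_2,-) \cong \lim_F$ of~\cite[Lemma~4.4.9]{madsen}, and a separate contradiction argument for $n = 3$ which your proposal omits entirely.

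Your normalization argument for $F(\rho) = 0$ is circular. To keep $\rho' = \rho - m\xi_{5,3}$ a generator of a $\mathbb{Z}/2\mathbb{Z}$-summand complementary to $\mathbb{Z}/8\mathbb{Z}\cdot\xi_{5,3}$, the class $\rho'$ must still have order two, which forces $m \equiv 0 \pmod 4$; but then $F(\rho') - F(\rho) = -m\xi_{5,2}$ lies in $4\,\mathbb{Z}/4\mathbb{Z}\cdot\xi_{5,2} = 0$, so the modification changes $F(\rho)$ not at all. You cannot arrange $F(\rho) = 0$ by a change of generator; the paper proves it directly by writing out the explicit map of extensions induced by $F \colon \mathbb{H}_5(C_8,T(\mathbb{S})) \to \mathbb{H}_5(C_4,T(\mathbb{S}))$ and observing that the elements corresponding to $\rho$ map to zero.
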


\begin{proof}We have already evaluated the $E^{\infty}$-term of the
spectral sequence 
$$E_{s,t}^2 = H_s(C_{2^{n-1}}, \operatorname{TR}_t^1(\mathbb{S};2))
\Rightarrow \mathbb{H}_{s+t}(C_{2^{n-1}}, T(\mathbb{S})),$$
for $s+t \leqslant 7$. We have also defined all the homotopy classes
that appear in the statement. Hence, it remains only to prove that
these homotopy classes have the indicated order. First, the edge
homomorphism of the spectral sequence is the map
$$V^{n-1} \colon \operatorname{TR}_t^1(\mathbb{S};2) \to
\mathbb{H}_t(C_{2^{n-1}}, T(\mathbb{S})).$$
Since this map has a retraction, the classes $V^{n-1}(\eta)$
and $V^{n-1}(\eta^2)$ both generate a direct summand
$\mathbb{Z}/2\mathbb{Z}$ and the class $V^{n-1}(\nu)$ generates a
direct summand $\mathbb{Z}/8\mathbb{Z}$ as stated. This completes the
proof for $q \leqslant 2$. Next, the Frobenius map
$$F \colon \mathbb{H}_3(C_2, T(\mathbb{S})) \to
\operatorname{TR}_3^1(\mathbb{S};2)$$
is surjective by~\cite[Table~IV]{whitehead}. This implies that the
class $\xi_{3,1}$ has order $8$ and that the group
$\mathbb{H}_3(C_2,T(\mathbb{S}))$ is as stated. We note that
$4\xi_{3,1}$ is congruent to $dV(\eta^2)$ modulo the image of the edge
homomorphism.

Next, we show by induction on $n \geqslant 3$ that the class
$\xi_{3,n-1}$ has order $2^n$. The class $\xi_{3,2}$ has order either
$8$ or $16$, because $F(\xi_{3,2}) = \xi_{3,1}$ has order $8$. If
$\xi_{3,2}$  has order $16$, then the quotient of $\mathbb{H}_3(C_4,
T(\mathbb{S}))$ by the image of the edge homomorphism is equal to
$\mathbb{Z}/16\mathbb{Z}$ generated by the image of $\xi_{3,2}$. But
then $V(\xi_{3,1})$ has order $8$ which contracticts that, modulo the
image of the edge homomorphism,
$$4V(\xi_{3,1}) = V(4\xi_{3,1}) \equiv VdV(\eta^2) = 2dV^2(\eta^2) = 0.$$
Hence, $\xi_{3,2}$ has order $8$, and the group $\mathbb{H}_3(C_4,
T(\mathbb{S}))$ is as stated. So we let $n \geqslant 4$ and assume,
inductively, that $\xi_{3,n-2}$ has order $2^{n-1}$. The class
$2^{n-2}\xi_{3,n-2}$ is represented in the spectral sequence by $\eta
z_2$. Now, by Lemma~\ref{homologyfinite}~(iv), we have $V(\eta z_2) =
\eta z_2$, which shows that the class $2^{n-2}V(\xi_{3,n-2}) =
V(2^{n-2}\xi_{3,n-2})$ is non-zero and represented by $\eta z_2$. This
implies that $2^{n-1}\xi_{3,n-1}$ is non-zero, and hence, $\xi_{3,n-1}$
has order $2^n$ as stated.

Next, we show that, for $n \geqslant 3$, the class $\xi_{5,n-1}$
has order $2^{n-1}$. If $n \geqslant 4$, the spectral sequence shows
that there is an extension
$$0 \to \mathbb{Z}/4\mathbb{Z} \to
\mathbb{H}_5(C_{2^{n-1}}, T(\mathbb{S})) \to
\mathbb{Z}/2^{n-2}\mathbb{Z} \to 0.$$
The Verschiebung map induces a map of extensions from the extension
for $n$ to the extension for $n + 1$, and Lemma~\ref{homologyfinite}
shows that the resulting extension of colimits with respect to the
Verschiebung maps is an extension 
$$0 \to \mathbb{Z}/4\mathbb{Z} \to
\operatornamewithlimits{colim}_{V}
\mathbb{H}_5(C_{2^{n-1}}, T(\mathbb{S})) \to 
\mathbb{Q}_2/\mathbb{Z}_2 \to 0.$$
It follows from~\cite[Lemma~4.4.9]{madsen} that there is a canonical
isomorphism
$$\operatorname{Ext}(\mathbb{Q}_2/\mathbb{Z}_2,
\operatornamewithlimits{colim}_{V} 
\mathbb{H}_5(C_{2^{n-1}}, T(\mathbb{S}))) \xrightarrow{\sim}
\operatornamewithlimits{lim}_{F}
\mathbb{H}_6(C_{2^{n-1}}, T(\mathbb{S}))$$
and, by the proof of Lemma~\ref{generatorsxi}, the
right-hand group is cyclic of order $2$. This implies that the
extension for $n \geqslant 4$ is equivalent to the extension
$$0 \to
\mathbb{Z}/4\mathbb{Z} \xrightarrow{(1,-2^{n-3})} 
\mathbb{Z}/2\mathbb{Z} \oplus \mathbb{Z}/2^{n-1}\mathbb{Z}
\xrightarrow{2^{n-3}+1} 
\mathbb{Z}/2^{n-2}\mathbb{Z} \to 0.$$
It follows that, for $n \geqslant 4$, the class $\xi_{5,n-1}$ has
order $2^{n-1}$ as stated. It remains to prove that $\xi_{5,2}$ has
order $4$. If this is not the case, the map of extensions induced by
the Verschiebung map $V \colon \mathbb{H}_5(C_4, T(\mathbb{S})) \to
\mathbb{H}_5(C_8, T(\mathbb{S}))$ takes the form
$$\xymatrix{
{ 0 } \ar[r] &
{ \mathbb{Z}/2\mathbb{Z} } \ar[r]^(.38){(1,0)} \ar[d]^{2} &
{ \mathbb{Z}/2\mathbb{Z} \oplus \mathbb{Z}/2\mathbb{Z} }
\ar[r]^(.62){0+1} \ar[d]^{V} &
{ \mathbb{Z}/2\mathbb{Z} } \ar[r] \ar[d]^{2} &
{ 0 } \cr
{ 0 } \ar[r] &
{ \mathbb{Z}/4\mathbb{Z} } \ar[r]^(.36){(1,-2)} &
{ \mathbb{Z}/2\mathbb{Z} \oplus \mathbb{Z}/8\mathbb{Z} } 
\ar[r]^(.62){2+1} &
{ \mathbb{Z}/4\mathbb{Z} } \ar[r] &
{ 0 } \cr
}$$
where the middle map $V$ takes $(1,0)$ to $(0,4)$ and $(0,1)$ to
either $(1,0)$ or $(1,4)$. The class $\xi_{5,2}$ corresponds to either
$(0,1)$ or $(1,1)$ in the top middle group. In either case, we find
that the class $V(\xi_{5,2})$ has order $2$ and reduces to a generator
of the quotient of $\mathbb{H}_5(C_8, T(\mathbb{S}))$ by the subgroup
$\mathbb{Z}/8\mathbb{Z} \cdot \xi_{5,3}$. It follows that the class
$$V(\xi_{5,2}) - 2\xi_{5,3} \in \mathbb{H}_5(C_8, T(\mathbb{S}))$$
generates the kernel of the edge homomorphism onto
$\mathbb{Z}/4\mathbb{Z} \cdot 2\iota z_5$. Then,
Lemma~\ref{homologyfinite} shows that the class
$F(V(\xi_{5,2}) - 2\xi_{5,3})$ generates the kernel of the edge
homomorphism from $\mathbb{H}_5(C_4, T(\mathbb{S}))$ onto
$\mathbb{Z}/2\mathbb{Z} \cdot 2\iota z_5$. But $F(V(\xi_{5,2}) -
2\xi_{5,3}) = 0$ which is a contradiction. We conclude that the group
$\mathbb{H}_5(C_4, T(\mathbb{S}))$ is cyclic as stated. 

Finally, the Frobenius map $F \colon \mathbb{H}_5(C_8,
T(\mathbb{S})) \to \mathbb{H}_5(C_4, T(\mathbb{S}))$ induces a map of
extensions which takes the form
$$\xymatrix{
{ 0 } \ar[r] &
{ \mathbb{Z}/4\mathbb{Z} } \ar[r]^(.36){(1,-2)} \ar[d]^{1} &
{ \mathbb{Z}/2\mathbb{Z} \oplus \mathbb{Z}/8\mathbb{Z} } 
\ar[r]^(.62){2+1} \ar[d]^{0 + 1} &
{ \mathbb{Z}/4\mathbb{Z} } \ar[r] \ar[d]^{1} &
{ 0 } \cr
{ 0 } \ar[r] &
{ \mathbb{Z}/2\mathbb{Z} } \ar[r]^{2} &
{ \mathbb{Z}/4\mathbb{Z} } \ar[r]^{1} &
{ \mathbb{Z}/2\mathbb{Z} } \ar[r] &
{ 0 } \cr
}$$
The class $\rho$ corresponds to one of the elements $(1,0)$ or $(1,4)$
of the top middle group both of which map to zero by the middle
vertical map. It follows that $F(\rho)$ is zero as stated.
\qed
\end{proof}

We define $\xi_{q,s} \in
\operatorname{TR}_q^n(\mathbb{S};2)$ to be the image of $\xi_{q,s} \in
\mathbb{H}_q(C_{2^s}, T(\mathbb{S}))$ by the composition of the norm
map and the iterated Segal-tom~Dieck splitting
$$\mathbb{H}_q(C_{2^s}, T(\mathbb{S})) \to
\operatorname{TR}_q^s(\mathbb{S};2) \to
\operatorname{TR}_q^n(\mathbb{S};2).$$
Similarly, we define $\rho \in \operatorname{TR}_5^n(\mathbb{S};2)$ to
be the image of $\rho \in \mathbb{H}_5(C_8, T(\mathbb{S}))$ by the
composition of the norm map and the itereated Segal-tom~Dieck
splitting
$$\mathbb{H}_5(C_8, T(\mathbb{S})) \to
\operatorname{TR}_5^4(\mathbb{S};2) \to
\operatorname{TR}_5^n(\mathbb{S};2).$$
Then we have the following result.

\begin{theorem}\label{TRsphere}The groups
$\operatorname{TR}_q^n(\mathbb{S};2)$ with $q \leqslant 5$ are given
by
$$\begin{aligned}
\operatorname{TR}_0^n(\mathbb{S};2) & = \bigoplus_{0 \leqslant s < n}
\mathbb{Z}_2 \cdot V^s(1) \cr
\operatorname{TR}_1^n(\mathbb{S};2) & = \bigoplus_{0 \leqslant s < n}
\mathbb{Z}/2\mathbb{Z} \cdot V^s(\eta) \oplus
\bigoplus_{1 \leqslant s < n}
\mathbb{Z}/2^s\mathbb{Z} \cdot \xi_{1,s} \cr
\operatorname{TR}_2^n(\mathbb{S};2) & = \bigoplus_{0 \leqslant s < n}
\mathbb{Z}/2\mathbb{Z} \cdot V^s(\eta^2) \oplus
\bigoplus_{1 \leqslant s < n}
\mathbb{Z}/2\mathbb{Z} \cdot \eta\xi_{1,s} \cr
\operatorname{TR}_3^n(\mathbb{S};2) & = 
\bigoplus_{0 \leqslant s < n}
\mathbb{Z}/8\mathbb{Z} \cdot V^s(\nu) \oplus
\bigoplus_{1 \leqslant s < n} 
\mathbb{Z}/2^u\mathbb{Z} \cdot \xi_{3,s} \oplus
\bigoplus_{2 \leqslant s < n} 
\mathbb{Z}/2\mathbb{Z} \cdot \eta^2\xi_{1,s} \cr
\operatorname{TR}_4^n(\mathbb{S};2) & = \bigoplus_{1 \leqslant s < n}
\mathbb{Z}/2^v\mathbb{Z} \cdot \nu\xi_{1,s} \cr
\operatorname{TR}_5^n(\mathbb{S};2) & = \bigoplus_{2 \leqslant s < n}
\mathbb{Z}/2^s\mathbb{Z} \cdot \xi_{5,s} \oplus
\bigoplus_{3 \leqslant s < n}
\mathbb{Z}/2\mathbb{Z} \cdot V^{s-3}(\rho) \cr
\end{aligned}$$
where $u = u(s)$ is the larger of $3$ and $s+1$, and where $v = v(s)$
is the smaller of $3$ and $s$. The restriction map takes $\xi_{q,s}$
to $\xi_{s,q}$, if $s < n-1$, and to zero, if $s = n-1$, and takes
$\rho$ to $\rho$, if $n \geqslant 5$, and to zero, if $n = 4$. The
Frobenius map takes $\xi_{q,s}$ to $\xi_{q,s-1}$, where $\xi_{1,0} =
\eta$ and $\xi_{3,0} = \nu$, and takes $\rho$ to zero. Connes'
operator takes $V^s(1)$ to $\xi_{1,s} + V^s(\eta)$, and takes
$\xi_{1,s}$ to zero.
\end{theorem}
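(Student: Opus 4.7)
The plan is to derive the theorem from the computations of the group-homology groups $\mathbb{H}_q(C_{2^s}, T(\mathbb{S}))$ in Proposition~\ref{orbitssphere} by exploiting the Segal-tom~Dieck splitting to break up the fundamental long-exact sequence. Because for $A = \mathbb{S}$ the restriction map $R$ admits a section $S$, the fundamental long-exact sequence
$$\cdots \to \mathbb{H}_q(C_{2^{n-1}}, T(\mathbb{S})) \xrightarrow{N}
\operatorname{TR}_q^n(\mathbb{S};2) \xrightarrow{R}
\operatorname{TR}_q^{n-1}(\mathbb{S};2) \xrightarrow{\partial}
\mathbb{H}_{q-1}(C_{2^{n-1}}, T(\mathbb{S})) \to \cdots$$
degenerates into split short-exact sequences. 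Induction on $n$ then yields a canonical direct sum decomposition
$$\operatorname{TR}_q^n(\mathbb{S};2) \;\cong\; \bigoplus_{s=0}^{n-1} \mathbb{H}_q(C_{2^s}, T(\mathbb{S})),$$
in which the $s$th summand is embedded via $N \circ S^{n-1-s}$. Substituting the data of Proposition~\ref{orbitssphere} yields the additive structure and generating sets stated in the theorem, with $\xi_{q,s}$ and $\rho$ matching their definitions given just before the theorem and the iterated corestriction classes in $\mathbb{H}_q(C_{2^s}, T(\mathbb{S}))$ corresponding to the classes $V^s(x)$ with $x \in \pi_q^s = \operatorname{TR}_q^1(\mathbb{S};2)$.

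The action of $R$ is then immediate from the splitting: it annihilates the summand indexed by $s = n-1$ and preserves the remaining summands. For the Frobenius, the compatibilities $F \circ S = S \circ F$ (valid for $n \geqslant 3$) and $F \circ S_2 = \operatorname{id}$ reduce the computation to the Frobenius action on the homology groups, which is recorded at the end of Proposition~\ref{orbitssphere} and yields $F(\xi_{q,s}) = \xi_{q,s-1}$ and $F(\rho) = 0$ as stated. For Connes' operator applied to $V^s(1)$, the key input is the identity $\xi_{1,s} = dV^s(1) + V^s(\eta)$ established in the proof of Lemma~\ref{generatorsxi}(i); since $2V^s(\eta)=0$, this rearranges to $d(V^s(1)) = \xi_{1,s} + V^s(\eta)$.

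The main obstacle is the verification that $d(\xi_{1,s}) = 0$. Applying $d$ to the identity $\xi_{1,s} = dV^s(1) + V^s(\eta)$ and invoking the general relations $d^2 = \eta d$, $Vd = 2dV$, $FdV = d + \eta$, and the projection formula $\eta V^s(\eta) = V^s(\eta^2)$, the problem reduces to identifying $d(V^s(\eta))$ as $\eta \xi_{1,s} + V^s(\eta^2)$ inside $\operatorname{TR}_2^n(\mathbb{S};2)$. The Frobenius value of $d(V^s(\eta))$ is determined by $FdV = d + \eta$ applied inductively to $V^{s-1}(\eta)$, and the remaining $F$-kernel ambiguity is eliminated by the splitting together with the compatibility of $d$ with the norm map and with the Segal-tom~Dieck section. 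The resulting bookkeeping, while routine once organized, is the most delicate part of the argument; all other operator identities follow either from the general algebraic structure or from the identification of generators in terms of the homology summands.
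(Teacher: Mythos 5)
The bulk of your proposal---splitting the fundamental long-exact sequence via the Segal--tom~Dieck section $S$, inducting on $n$ to obtain the direct sum decomposition $\operatorname{TR}_q^n(\mathbb{S};2) \cong \bigoplus_{s<n} \mathbb{H}_q(C_{2^s},T(\mathbb{S}))$, and reading off the additive structure together with the $R$- and $F$-formulas from Prop.~\ref{orbitssphere}---is exactly the paper's argument, and this part is correct. The only real issue is your treatment of the identity $d\xi_{1,s}=0$.

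You correctly reduce this to showing $d(V^s(\eta)) = \eta\xi_{1,s} + V^s(\eta^2)$, but you then fail to notice that this identity is a one-line consequence of the projection formula you already invoke. Indeed $V^s(\eta) = \eta V^s(1)$, so
$$d(V^s(\eta)) = \eta\, d(V^s(1)) = \eta\bigl(\xi_{1,s} + V^s(\eta)\bigr) = \eta\xi_{1,s} + V^s(\eta^2),$$
which is exactly the paper's observation $ddV^s(1) = d(\eta V^s(1)) = dV^s(\eta)$, giving $d\xi_{1,s} = 2\,dV^s(\eta) = 0$ at once. Instead, you propose computing $F(dV^s(\eta))$ inductively via $FdV = d+\eta$ and assert that the remaining $F$-kernel ambiguity is eliminated by the splitting and the compatibility of $d$ with $N$ and $S$. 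This does not work: the class $V^s(\eta^2)$ lies both in $\ker F$ (since $F(V^s(\eta^2)) = 2V^{s-1}(\eta^2) = 0$) and in the $s$-th summand of the splitting, the same summand that contains $\eta\xi_{1,s}$ and hence $d(V^s(\eta))$. Thus the Frobenius calculation together with the splitting pins down only the $\eta\xi_{1,s}$-coefficient of $d(V^s(\eta))$ and leaves the $V^s(\eta^2)$-coefficient completely undetermined. The ``routine bookkeeping'' you gesture at therefore contains a genuine gap, which the direct use of the projection formula closes immediately.
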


\begin{proof}The Segal-tom~Dieck splitting gives a section of the
restriction map. Hence, the fundamental long-exact sequence
$$\cdots \to \mathbb{H}_q(C_{p^{n-1}},T(\mathbb{S})) \xrightarrow{N}
\operatorname{TR}_q^n(\mathbb{S};p) \xrightarrow{R}
\operatorname{TR}_q^{n-1}(\mathbb{S};p) \xrightarrow{\partial}
 \mathbb{H}_{q-1}(C_{p^{n-1}},T(\mathbb{S})) \to \cdots$$
of Prop.~\ref{fundamentalcofibrationsequence}
breaks into split short-exact sequences and Prop.~\ref{orbitssphere}
then shows that the groups $\operatorname{TR}_q^n(\mathbb{S};2)$ are
as stated. Since the Frobenius map and the Segal-tom~Dieck splitting
commute, the formula for the Frobenius also follows form
Prop.~\ref{orbitssphere}. Finally, from the proof of 
Prop.~\ref{orbitssphere}, we have $\xi_{1,s} = dV^s(1) +
V^s(\eta)$. This implies that
$$d\xi_{1,s} = ddV^s(1) + dV^s(\eta) = dV^s(\eta) + dV^s(\eta) = 0$$
as stated.
\qed
\end{proof}

\begin{remark}We have not determined $\eta \xi_{3,s}$ and
$d\xi_{3,s}$.
\end{remark}

\section{The groups $\operatorname{TR}_q^n(\mathbb{Z};2)$}\label{integerssection}

In this section, we again implicitly consider homotopy groups with
$\mathbb{Z}_2$-coefficients. The groups
$\smash{ \operatorname{TR}_q^1(\mathbb{Z};2) }$ were evaluated by 
B\"{o}kstedt~\cite{bokstedt1}; see
also~\cite[Thm.~1.1]{lindenstraussmadsen}. The group 
$\smash{ \operatorname{TR}_0^1(\mathbb{Z};2) }$ is equal to
$\mathbb{Z}_2$ generated by the multiplicative unit element
$\iota = [1]_1$, and for positive integers $q$, the group
$\smash{ \operatorname{TR}_q^1(\mathbb{Z};2) }$ is finite cyclic of
order
$$|\operatorname{TR}_q^1(\mathbb{Z};2)| = \begin{cases}
2^{v_2(i)} & (\text{$q = 2i-1$ odd}) \cr
1 & (\text{$q$ even}). \cr
\end{cases}$$
We choose a generator $\lambda$ of $K_3(\mathbb{Z})$ such that $2\lambda
= \nu$. Then, by~\cite[Thm.~10.4]{bokstedtmadsen}, the image 
of $\lambda$ by the cyclotomic trace map generates the group
$\operatorname{TR}_3^1(\mathbb{Z};2)$. We also choose a generator
$\gamma$ of the group $\operatorname{TR}_7^1(\mathbb{Z};2)$. We first
derive the following result from Rognes' paper~\cite{rognes}.

\begin{proposition}\label{evenzero}The group
$\operatorname{TR}_q^n(\mathbb{Z};2)$ is zero, for every positive even
integer $q$ and every positive integer $n$.
\end{proposition}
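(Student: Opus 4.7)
\emph{Proof plan.} I would induct on $n \geqslant 1$. The base case $n = 1$ is B\"okstedt's calculation recalled above: $\operatorname{TR}_q^1(\mathbb{Z};2) = 0$ for every positive even integer $q$.

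For the inductive step, assume the statement for $n - 1$. From the fundamental long-exact sequence of Prop.~\ref{fundamentalcofibrationsequence} (applied with $m = 1$),
$$\operatorname{TR}_{q+1}^{n-1}(\mathbb{Z};2) \xrightarrow{\partial} \mathbb{H}_q(C_{2^{n-1}}, T(\mathbb{Z})) \xrightarrow{N} \operatorname{TR}_q^n(\mathbb{Z};2) \xrightarrow{R} \operatorname{TR}_q^{n-1}(\mathbb{Z};2),$$
the inductive hypothesis gives $\operatorname{TR}_q^{n-1}(\mathbb{Z};2) = 0$ for $q$ positive even, so $\operatorname{TR}_q^n(\mathbb{Z};2)$ is isomorphic to the cokernel of $\partial$. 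It therefore suffices to show that $\partial$ surjects onto $\mathbb{H}_q(C_{2^{n-1}}, T(\mathbb{Z}))$ in positive even total degrees.

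To analyze $\mathbb{H}_q(C_{2^{n-1}}, T(\mathbb{Z}))$ I would use the skeleton spectral sequence
$$E_{s, t}^2 = H_s\big(C_{2^{n-1}}, \operatorname{TR}_t^1(\mathbb{Z};2)\big) \Rightarrow \mathbb{H}_{s+t}(C_{2^{n-1}}, T(\mathbb{Z})),$$
with $E^2$-page given by B\"okstedt: $\operatorname{TR}_t^1(\mathbb{Z};2)$ is $\mathbb{Z}_2$ for $t = 0$, is $\mathbb{Z}/2^{v_2(i)}\mathbb{Z}$ for $t = 2i - 1$ odd, and is zero for $t$ positive and even. Hence the only positions $(s,t)$ that can contribute to a positive even total degree are those with $s$ odd positive and $t \equiv 3 \pmod{4}$. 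Rognes~\cite{rognes} carries out the detailed analysis of the higher differentials of this spectral sequence and, for each potentially surviving class, exhibits an explicit preimage under $\partial$ coming from a class in $K_*(\mathbb{Z})$ detected by the cyclotomic trace; combining these two inputs one verifies, antidiagonal-by-antidiagonal, the required surjectivity of $\partial$, which completes the induction.

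The main obstacle is that the $T(\mathbb{S})$-module differentials of Lemmas~\ref{d^2-differential} and~\ref{d^4-differential} do not suffice: at the prime $2$ the image of $\eta$ in $\operatorname{TR}_1^1(\mathbb{Z};2) = 0$ is zero, and the image of $\nu = 2\lambda$ in $\operatorname{TR}_3^1(\mathbb{Z};2) \cong \mathbb{Z}/2\mathbb{Z}$ is also zero, so the module-theoretic contributions to $d^2$ and $d^4$ both vanish. The surviving classes must therefore be killed either by intrinsic $Sq^4$-type $k$-invariants of the spectrum $T(\mathbb{Z})$ or by the image of $\partial$ coming from $K$-theoretic classes such as $\lambda \in K_3(\mathbb{Z})$ and $\gamma \in K_7(\mathbb{Z})$. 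Assembling this genuine arithmetic input from Rognes' analysis is precisely the content of the proof.
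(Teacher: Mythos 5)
Your plan diverges substantially from the paper's proof, and as written it contains a genuine gap at the step it flags as ``the content of the proof.''

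The paper does not induct on $n$ via the fundamental long-exact sequence at all. Instead, it first establishes two preliminary facts (that $\operatorname{TR}_q^n(\mathbb{Z};2)$ is finite for $q>0$ and that $\operatorname{TR}_0^n(\mathbb{Z};2)$ is $\mathbb{Z}_2$-free), and then works with the whole-plane Bockstein spectral sequence converging to $\operatorname{TR}_*^n(\mathbb{Z};2,\mathbb{Q}_2)$. Since this abutment vanishes in positive degrees, every element of positive total degree is annihilated by some differential. The key arithmetic input is Rognes' dimension equality $\dim_{\mathbb{F}_2}\operatorname{TR}_{2i-1}^n(\mathbb{Z};2,\mathbb{F}_2)=\dim_{\mathbb{F}_2}\operatorname{TR}_{2i}^n(\mathbb{Z};2,\mathbb{F}_2)$, which together with the periodicity $\underline{2}\colon E_{s,t}^r\xrightarrow{\sim}E_{s-1,t+1}^r$ allows an induction on $i$ showing that every class in degree $2i-1$ is an infinite cycle and every nonzero class in degree $2i$ supports a nonzero differential; restricting to the left half-plane Bockstein spectral sequence for $\mathbb{Z}_2$-coefficients then gives the vanishing. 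This is a counting argument over the Bockstein filtration, not a surjectivity argument for the connecting map of the fundamental sequence.

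Your plan instead reduces, via the sequence
$\operatorname{TR}_{q+1}^{n-1}(\mathbb{Z};2)\xrightarrow{\partial}\mathbb{H}_q(C_{2^{n-1}},T(\mathbb{Z}))\xrightarrow{N}\operatorname{TR}_q^n(\mathbb{Z};2)\to 0$,
to showing $\partial$ is surjective for $q$ positive even. That reduction is fine, but you never actually establish the surjectivity. The appeal to ``Rognes carries out the detailed analysis of the higher differentials of this spectral sequence and, for each potentially surviving class, exhibits an explicit preimage under $\partial$'' is not something Rognes does. Rognes' paper computes $\operatorname{TR}_*^n(\mathbb{Z};2,\mathbb{F}_2)$ using the Tate and norm (homotopy fixed-point) spectral sequences and the cyclotomic trace, not the connecting map $\partial$ of B\"okstedt--Hesselholt--Madsen's cofibration sequence, and he gives dimensions and differential patterns in those spectral sequences rather than preimages for $\partial$. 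Translating his results into surjectivity of $\partial$ would require (a) a full computation of $\mathbb{H}_q(C_{2^{n-1}},T(\mathbb{Z}))$, including the $d^r$ for $r\geqslant 4$ that, as you correctly observe, are not captured by the $\eta$- and $\nu$-multiplication lemmas for $T(\mathbb{S})$, (b) a computation of $\operatorname{TR}_{q+1}^{n-1}(\mathbb{Z};2)$ in all odd degrees, and (c) control of the connecting map. None of this is carried out, and it is not a routine consequence of what Rognes proves. The paper avoids precisely this by using the single numerical fact (the dimension equality) inside a Bockstein argument, so it never needs to identify $\partial$ or the higher skeleton differentials in positive degree. You should either carry out the three steps above in full, or adopt the paper's Bockstein approach.
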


\begin{proof}The group $\operatorname{TR}_q^n(\mathbb{Z};2)$ is
finite, for all positive integers $q$ and $n$. Indeed, this is true,
for $n=1$ by B\"{o}kstedt's result that we recalled above and follows,
inductively, for $n \geqslant 1$, from the fundamental long-exact
sequence of Prop.~\ref{fundamentalcofibrationsequence}, the skeleton
spectral sequence, and the fact that the boundary map
$$\partial \colon \operatorname{TR}_1^{n-1}(\mathbb{Z};2) \to
\mathbb{H}_0(C_{2^{n-1}},T(\mathbb{Z}))$$
in the fundamental long-exact sequence is
zero~\cite[Prop.~3.3]{hm}. Moreover, the group
$\operatorname{TR}_0^n(\mathbb{Z};2)$ is a free
$\mathbb{Z}_2$-module. It follows that, in the strongly convergent
whole plane Bockstein spectral sequence
$$E_{s,t}^2 = \operatorname{TR}_{s+t}^n(\mathbb{Z};2,
2^{-s}\mathbb{Z}/2^{-(s-1)}\mathbb{Z}) \Rightarrow
\operatorname{TR}_{s+t}^n(\mathbb{Z};2,\mathbb{Q}_2)$$
induced from the $2$-adic filtration of $\mathbb{Q}_2$, all elements
of total degree $0$ survive to the $E^{\infty}$-term and all elements
of positive total degree are annihilated by differentials. The
differentials are periodic in the sense that the isomorphism $2 \colon
\mathbb{Q}_2 \to \mathbb{Q}_2$ induces an isomorphism of spectral
sequences
$$\underline{2} \colon E_{s,t}^r \xrightarrow{\sim} E_{s-1,t+1}^r.$$
We recall from~\cite[Lemma~9.4]{rognes} that, for all positive
integers $n$ and $i$,
$$\dim_{\mathbb{F}_2}
\operatorname{TR}_{2i-1}^n(\mathbb{Z};2,\mathbb{F}_2) =
\dim_{\mathbb{F}_2} 
\operatorname{TR}_{2i}^n(\mathbb{Z};2,\mathbb{F}_2).$$
Using this result, we show, by induction on $i \geqslant 1$, that
every element of total degree $2i-1$ is an infinite cycle and that
every non-zero element of total degree $2i$ supports a non-zero
differential. The proof of the case $i = 1$ and of the induction step
are similar. The statement that every element in total degree $2i-1$
is an infinite cycle follows, for $i = 1$, from the fact that every
element of total degree $0$ survives to the $E^{\infty}$-term, and for
$i > 1$, from the inductive hypothesis that every non-zero element of
total degree $2i-2$ supports a non-zero differential. Since no element
of total degree $2i-1$ survives to the $E^{\infty}$-term, it is hit by
a differential supported on an element of total degree $2i$. Since
the differentials are periodic and $E_{s,2i-1-s}^2$ and $E_{s,2i-s}^2$
have the same dimension, we find that non-zero every element of total
degree $2i$ supports a non-zero differential as stated.

Finally, we consider the strongly convergent left half-plane Bockstein
spectral sequence induced from the $2$-adic filtration of
$\mathbb{Z}_2$,
$$E_{s,t}^2 = \operatorname{TR}_{s+t}^n(\mathbb{Z};2,
2^{-s}\mathbb{Z}/2^{-(s-1)}\mathbb{Z}) \Rightarrow
\operatorname{TR}_{s+t}^n(\mathbb{Z};2,\mathbb{Z}_2).$$
The differentials in this spectral sequence are obtained by
restricting the differentials in the whole plan Bockstein spectral
sequence above. It follows that in this spectral sequence, too, every
non-zero element of positive even total degree supports a non-zero
differential. This completes the proof.
\qed
\end{proof}

\begin{remark}The same argument based on B\"{o}kstedt and Madsen's
paper~\cite{bokstedtmadsen}, shows that, for an odd prime $p$, the
groups $\operatorname{TR}_q^n(\mathbb{Z};p)$ are zero, for every
positive even integer $q$ and every positive integer $n$.
\end{remark}

We next consider the skeleton spectral sequence
$$E_{s,t}^2 = H_s(C_{2^{n-1}}, \operatorname{TR}_t^1(\mathbb{Z};2))
\Rightarrow \mathbb{H}_{s+t}(C_{2^{n-1}},T(\mathbb{Z})).$$
The $E^2$-term, for $s + t \leqslant 7$, takes the form
\begin{center}
\begin{tabular*}{0.95\textwidth}{@{\extracolsep{\fill}}ccccccccc}
$\mathbb{Z}/4\mathbb{Z}$ &  &  &  &  &  &  &  \cr
0 & 0 &  &  &  &  &  & \cr
0 & 0 & 0 &  &  &  &  & \cr
0 & 0 & 0 & 0 &  &  &  & \cr
$\mathbb{Z}/2\mathbb{Z}$ & $\mathbb{Z}/2\mathbb{Z}$ &
$\mathbb{Z}/2\mathbb{Z}$ & $\mathbb{Z}/2\mathbb{Z}$ &  
$\mathbb{Z}/2\mathbb{Z}$ & &  &  \cr
0 & 0 & 0 & 0 & 0 & 0 &  &  \cr
0 & 0 & 0 & 0 & 0 & 0 & 0 &  \cr
$\mathbb{Z}_2$ & $\mathbb{Z}/2^{n-1}\mathbb{Z}$ &
0 & $\mathbb{Z}/2^{n-1}\mathbb{Z}$ &
0 & $\mathbb{Z}/2^{n-1}\mathbb{Z}$ &
0 & $\mathbb{Z}/2^{n-1}\mathbb{Z}$ \cr
\end{tabular*}
\end{center}
The group $\smash{ E_{s,0}^2 }$ is generated by $\iota z_s$ and the
group $\smash{ E_{s,3}^2 }$ is generated by $\lambda z_s$. The group
$\smash{ E_{s,7}^2 }$ is generated by $\gamma z_s$, if $s = 0$ or if
$s$ is odd of if $n > 1$, and is generated by $2\gamma z_s$, if $n =
1$ and $s$ is positive and even. It follows
from~\cite[Thm.~8.14]{rognes} that the group
$\mathbb{H}_4(C_{2^{n-1}},T(\mathbb{Z});\mathbb{F}_2)$ is an 
$\mathbb{F}_2$-vector space of dimension $1$. This implies that
$\smash{ d^4(\iota z_5) = \lambda z_1 }$. On the other hand,
$\smash{ d^4(\iota z_7) = 0 }$, since $\iota z_7$ survives to the
$E^4$-term of the skeleton spectral sequence for
$\smash{ \mathbb{H}_q(C_{2^{n-1}}, T(\mathbb{S})) }$ and is a
$\smash{ d^4 }$-cycle. This shows that the $\smash{ E^5 }$-term for $s + t
\leqslant 7$ is given by
\begin{center}
\begin{tabular*}{0.95\textwidth}{@{\extracolsep{\fill}}ccccccccc}
$\mathbb{Z}/4\mathbb{Z}$ &  &  &  &  &  &  &  \cr
0 & 0 &  &  &  &  &  & \cr
0 & 0 & 0 &  &  &  &  & \cr
0 & 0 & 0 & 0 &  &  &  & \cr
$\mathbb{Z}/2\mathbb{Z}$ & 0 &
$\mathbb{Z}/2\mathbb{Z}$ & $\mathbb{Z}/2\mathbb{Z}$ &  
$\mathbb{Z}/2\mathbb{Z}$ & &  &  \cr
0 & 0 & 0 & 0 & 0 & 0 &  &  \cr
0 & 0 & 0 & 0 & 0 & 0 & 0 &  \cr
$\mathbb{Z}_2$ & $\mathbb{Z}/2^{n-1}\mathbb{Z}$ &
0 & $\mathbb{Z}/2^{n-1}\mathbb{Z}$ &
0 & $2\mathbb{Z}/2^{n-1}\mathbb{Z}$ &
0 & $\mathbb{Z}/2^{n-1}\mathbb{Z}$ \cr
\end{tabular*}
\end{center}
We claim that the differential $d^5 \colon E_{5,3}^5 \to E_{0,7}^5$ is
zero. Indeed, let
$$\smash{ F^{m-n} \colon {}'E_{s,t}^r \to E_{s,t}^r }$$
be the map of spectral sequences induced by the iterated Frobenius map
$$F^{m-n} \colon \mathbb{H}_q(C_{2^{m-1}}, T(\mathbb{Z})) \to
\mathbb{H}_q(C_{2^{n-1}}, T(\mathbb{Z})).$$
It follows from Lemma~\ref{homologyfinite} that the map $\smash{ F^{m-n}
\colon {}'E_{5,3}^5 \to E_{5,3}^5 }$ is an isomorphism and that, for
$m \geqslant n+2$, the map $\smash{ F^{m-n} \colon {}'E_{5,3}^5 \to
E_{5,3}^5 }$ is zero. Hence, the differential in question is zero as
claimed. It follows that the $\smash{ E^5 }$-term of the spectral
sequence is also the $\smash{ E^{\infty} }$-term. 

We choose a generator $\kappa$ of the infinite cyclic group
$K_5(\mathbb{Z})$ and recall the generator $\lambda$ of the group
$K_3(\mathbb{Z})$. We continue to write $\lambda$ and $\kappa$ for the
images of $\lambda$ and $\kappa$ in
$\operatorname{TR}_3^n(\mathbb{Z};2)$ and
$\operatorname{TR}_5^n(\mathbb{Z};2)$ by the cyclotomic trace
map. The norm map
$$\mathbb{H}_5(C_2, T(\mathbb{Z})) \to 
\operatorname{TR}_5^2(\mathbb{Z};2)$$
is an isomorphism, and we will also write $\kappa$ for the unique
class on the left-hand side whose image by the norm map is the class
$\kappa$ on the right-hand side. Finally, we continue to write
$\xi_{q,n} \in \mathbb{H}_q(C_{2^{n-1}}, T(\mathbb{Z}))$ for the
image by the map induced from the Hurewicz map $\ell \colon \mathbb{S}
\to \mathbb{Z}$ of the class $\xi_{q,n} \in \mathbb{H}_q(C_{2^{n-1}},
T(\mathbb{S}))$.

\begin{proposition}\label{orbitsintegers}The groups
$\mathbb{H}_q(C_{2^{n-1}}, T(\mathbb{Z}))$ with $q \leqslant 6$ are
given by
$$\begin{aligned}
\mathbb{H}_0(C_{2^{n-1}},T(\mathbb{Z})) & = 
\mathbb{Z}_2 \cdot V^{n-1}(\iota) \cr
\mathbb{H}_1(C_{2^{n-1}}, T(\mathbb{Z})) & = \begin{cases}
0 & \hskip26.2mm (n = 1) \cr
\mathbb{Z}/2^{n-1}\mathbb{Z} \cdot \xi_{1,n-1} & \hskip26.2mm
(n \geqslant 2) \cr 
\end{cases} \cr
\mathbb{H}_2(C_{2^{n-1}}, T(\mathbb{Z})) & = 0 \cr
\mathbb{H}_3(C_{2^{n-1}}, T(\mathbb{Z})) & = \begin{cases}
\mathbb{Z}/2\mathbb{Z} \cdot \lambda & \hskip29.6mm (n = 1) \cr
\mathbb{Z}/2^n \mathbb{Z} \cdot \xi_{3,n-1} & \hskip29.6mm (n \geqslant 2) \cr
\end{cases} \cr
\mathbb{H}_4(C_{2^{n-1}}, T(\mathbb{Z})) & = 0 \cr
\mathbb{H}_5(C_{2^{n-1}}, T(\mathbb{Z})) & = \begin{cases}
0 & (n = 1) \cr
\mathbb{Z}/2\mathbb{Z} \cdot \kappa & (n = 2) \cr
\mathbb{Z}/2^{n-2}\mathbb{Z} \cdot \xi_{5,n-1} \oplus
\mathbb{Z}/2\mathbb{Z} \cdot V^{n-2}(\kappa) & (n \geqslant 3) \cr
\end{cases} \cr
\mathbb{H}_6(C_{2^{n-1}}, T(\mathbb{Z})) & = \begin{cases}
0 & \hskip25.5mm (n = 1) \cr
\mathbb{Z}/2\mathbb{Z} \cdot dV^{n-2}(\kappa) & \hskip25.5mm
(n \geqslant 2) \cr
\end{cases} \cr
\end{aligned}$$
In addition, $F(\xi_{q,n-1}) = \xi_{q,n-2}$, where $\xi_{1,0}$,
$\xi_{3,0}$, and $\xi_{5,0}$ are zero.
\end{proposition}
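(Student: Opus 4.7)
The plan is to extract the group structures of Prop.~\ref{orbitsintegers} from the $E^5 = E^\infty$-term of the skeleton spectral sequence computed in the preceding paragraphs, identify the claimed generators as images under $\ell_*$ of the sphere generators (for the classes $\xi_{q,n-1}$) or under the norm of cyclotomic trace classes (for $\lambda$ and $\kappa$), and resolve the extension problems arising in total degrees $3$ and $5$. Throughout, naturality of the spectral sequence with respect to the Hurewicz map $\ell \colon \mathbb{S} \to \mathbb{Z}$ transfers information from the sphere case of Prop.~\ref{orbitssphere}.

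The cases $q \in \{0, 1, 2, 4\}$ are direct readings from the $E^\infty$-diagram. For $q = 0$ there is only $E_{0,0}^\infty = \mathbb{Z}_2$, identified with $V^{n-1}(\iota)$ via the edge map. For $q = 2$ and $q = 4$, every $E^\infty$-term along the relevant diagonal vanishes. For $q = 1$ the single contribution $E_{1,0}^\infty = \mathbb{Z}/2^{n-1}\mathbb{Z}$ is realized by $\xi_{1,n-1}^{\mathbb{Z}} := \ell_*(\xi_{1,n-1}^{\mathbb{S}})$, whose order is exactly $2^{n-1}$ since no lower filtration contributes.

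For $q = 3$ the associated graded yields an extension
$$0 \to \mathbb{Z}/2\mathbb{Z} \cdot \lambda \to \mathbb{H}_3(C_{2^{n-1}}, T(\mathbb{Z})) \to \mathbb{Z}/2^{n-1}\mathbb{Z} \cdot \iota z_3 \to 0,$$
which I claim is cyclic of order $2^n$, generated by $\xi_{3,n-1}^{\mathbb{Z}} := \ell_*(\xi_{3,n-1}^{\mathbb{S}})$. My plan is to argue by induction on $n$ parallel to Prop.~\ref{orbitssphere}. The base case $n = 2$ reduces via Prop.~\ref{fundamentalcofibrationsequence} and the vanishing of $\operatorname{TR}_4^1(\mathbb{Z};2)$ to an analysis of $\operatorname{TR}_3^2(\mathbb{Z};2)$; the non-splitting is forced by the surjectivity of the cyclotomic trace onto $\operatorname{TR}_3^1(\mathbb{Z};2)$ coming from $\lambda \in K_3(\mathbb{Z})$ via~\cite[Thm.~10.4]{bokstedtmadsen}, together with the absence of a Segal-tom~Dieck splitting over $\mathbb{Z}$. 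For the inductive step, one compares $V(\xi_{3,n-2}^{\mathbb{Z}})$ with $2\xi_{3,n-1}^{\mathbb{Z}}$ via $FV = 2$ and tracks filtration using Lemma~\ref{homologyfinite}~(iii)--(iv); the key input is that $2^{n-1}\xi_{3,n-1}^{\mathbb{Z}}$ coincides with the nonzero edge class $V^{n-1}(\lambda) \in F_0 = \mathbb{Z}/2\mathbb{Z} \cdot \lambda$.

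For $q = 5$ with $n \geq 3$ the associated graded gives
$$0 \to E_{2,3}^\infty = \mathbb{Z}/2\mathbb{Z} \to \mathbb{H}_5(C_{2^{n-1}}, T(\mathbb{Z})) \to E_{5,0}^\infty = \mathbb{Z}/2^{n-2}\mathbb{Z} \to 0,$$
which I claim splits. The class $\kappa \in \mathbb{H}_5(C_2, T(\mathbb{Z}))$ is defined via the norm isomorphism onto $\operatorname{TR}_5^2(\mathbb{Z};2)$ (using $\operatorname{TR}_5^1(\mathbb{Z};2) = 0 = \operatorname{TR}_6^1(\mathbb{Z};2)$ in the fundamental long-exact sequence), and $V^{n-2}(\kappa)$ generates the sub $\mathbb{Z}/2$. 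The order of $\xi_{5,n-1}^{\mathbb{Z}} := \ell_*(\xi_{5,n-1}^{\mathbb{S}})$ is shown to be exactly $2^{n-2}$ by mimicking the colimit-limit Ext argument of the sphere proof: identify $\operatorname{Ext}(\mathbb{Q}_2/\mathbb{Z}_2, \operatorname{colim}_V \mathbb{H}_5)$ with $\lim_F \mathbb{H}_6(C_{2^{n-1}}, T(\mathbb{Z}))$ via~\cite[Lemma~4.4.9]{madsen}, and verify that the resulting extension class vanishes. For $q = 6$ the sole contribution $E_{3,3}^\infty = \mathbb{Z}/2\mathbb{Z}$ is identified with $dV^{n-2}(\kappa)$ via the relation $Vd = pdV$ together with Lemma~\ref{d^2-differential}. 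The Frobenius formulas $F(\xi_{q,n-1}^{\mathbb{Z}}) = \xi_{q,n-2}^{\mathbb{Z}}$ follow from Lemma~\ref{generatorsxi} by naturality, and the vanishings $\xi_{q,0}^{\mathbb{Z}} = 0$ for $q = 1, 3, 5$ record that $\eta$ and $\nu$ map to zero in $\operatorname{TR}_*^1(\mathbb{Z};2)$ (using $\operatorname{TR}_1^1 = 0$ and $\nu = 2\lambda$). The principal obstacle is the non-splitting in degree $3$: in the sphere case the witness for non-vanishing of $2^{n-1}\xi_{3,n-1}^{\mathbb{S}}$ lived at the intermediate filtration $\eta z_2 \in E_{2,1}^\infty$, which here collapses because $\operatorname{TR}_1^1(\mathbb{Z};2) = 0$, so one must carry the non-vanishing all the way to the edge class $\lambda$; the Bökstedt-Madsen surjectivity input is precisely what anchors the base case of the induction.
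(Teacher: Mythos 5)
Your proposal takes a genuinely different route from the paper, and in doing so it opens a gap that the paper's chosen tools are precisely designed to close. The paper resolves the group-extension and cyclicity questions for $q = 3$ and $q = 5$ in a single stroke by invoking Rognes' theorem~\cite[Thm.~8.14]{rognes}: the $\mathbb{F}_2$-homotopy $\mathbb{H}_q(C_{2^{n-1}}, T(\mathbb{Z}); \mathbb{F}_2)$ has dimension $1$ in degree $3$ and dimension $2$ in degree $5$ (for $n \geqslant 3$). Since $\dim_{\mathbb{F}_2}(A/2)$ counts the cyclic summands of a finite abelian $2$-group $A$, and the spectral sequence already gives the order, this pins down the isomorphism type with no extension analysis at all. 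Your proposal does not use this dimension count, and it is exactly this omission that leaves your $q = 3$ and $q = 5$ arguments incomplete.

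For $q = 3$, you propose an induction modeled on Lemma~\ref{generatorsxi}, but you acknowledge that the mechanism fails because $\operatorname{TR}_1^1(\mathbb{Z};2) = 0$ kills the intermediate filtration witness $\eta z_2$, and you instead appeal to the ``absence of a Segal-tom~Dieck splitting over $\mathbb{Z}$'' to force the extension to be non-split. That is not an argument: the non-existence of a natural retraction of $R$ says nothing about whether $\operatorname{TR}_3^2(\mathbb{Z};2)$ or $\mathbb{H}_3(C_{2^{n-1}},T(\mathbb{Z}))$ happens to split as an abstract abelian group. You would need to exhibit that $2^{n-1}\xi_{3,n-1}$ is the nonzero filtration-$0$ class $V^{n-1}(\lambda)$, and the proposed route via $FV = 2$ and Lemma~\ref{homologyfinite} does not get there without a witness at positive filtration, which is what you lost. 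Similarly, B\"{o}kstedt-Madsen~\cite[Thm.~10.4]{bokstedtmadsen} only tells you that $\lambda$ hits a generator of $\operatorname{TR}_3^1(\mathbb{Z};2) \cong \mathbb{Z}/2\mathbb{Z}$; it does not settle the order of $\lambda$ in $\operatorname{TR}_3^2(\mathbb{Z};2)$. For $q = 5$, your appeal to~\cite[Lemma~4.4.9]{madsen} to compare $\operatorname{colim}_V$ and $\lim_F$ presupposes control of the Frobenius on $\mathbb{H}_6(C_{2^{n-1}},T(\mathbb{Z}))$, which is part of the statement being proved, and more importantly you never establish that $\kappa \neq 0$ in $\mathbb{H}_5(C_2,T(\mathbb{Z}))$: defining $\kappa$ through the norm isomorphism onto $\operatorname{TR}_5^2(\mathbb{Z};2)$ leaves open whether the image of $K_5(\mathbb{Z})$ is nonzero there. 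The paper handles this via~\cite[Prop.~4.2]{rognes1} ($i_1(\kappa) = \xi_5(0)$), which you do not cite. Finally, your $q = 6$ sketch (``via $Vd = pdV$ together with Lemma~\ref{d^2-differential}'') is too thin; the paper instead cites~\cite[Prop.~4.4.1]{hm4} directly. The upshot is that the computations outsourced to Rognes are load-bearing, not optional.
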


\begin{proof}The cases $q = 0$ and $q = 1$ follow immediately
from the spectral sequence above and from the fact that the
map $\operatorname{TR}_0^1(\mathbb{S};2) \to
\operatorname{TR}_0^1(\mathbb{Z};2)$ induced  by the Hurewicz map is
an isomorphism. The cases $q = 2$ and $q = 4$ follow directly from the
spectral sequence above. It follows from~\cite[Thm.~8.14]{rognes} that
$\mathbb{H}_3(C_{2^{n-1}}, T(\mathbb{Z}); \mathbb{F}_2)$ is an
$\mathbb{F}_2$-vector space of dimension $1$, for all $n \geqslant
1$. The statement for $q = 3$ follows. It also follows from
loc.~cit.~that $\mathbb{H}_5(C_{2^{n-1}}, T(\mathbb{Z});
\mathbb{F}_2)$ is an $\mathbb{F}_2$-vector space of dimension $0$, if
$n = 1$, dimension $1$, if $n = 2$, and dimension $2$, if $n \geqslant
3$. Hence, to prove the statement for $q = 5$, it will suffice to show
that the group $\mathbb{H}_5(C_2, T(\mathbb{Z}))$ is generated by the
class $\kappa$, or equivalently, that the composition
$$K_5(\mathbb{Z}) \to \operatorname{TC}_5^2\mathbb{Z};2) \to
\operatorname{TR}_5^2(\mathbb{Z};2) \xrightarrow{\sim}
\operatorname{TR}_5^2(\mathbb{Z};2,\mathbb{F}_2)$$
of the cyclotomic trace map and the modulo $2$ reduction map is
surjective. But this is the statement that $i_1(\kappa) = \xi_5(0)$ 
in~\cite[Prop.~4.2]{rognes1}. (Here $\xi_5(0)$ is name given in
loc.~cit.~to the generator of the right-hand group; it is unrelated to
the class $\xi_{5,0}$.) Finally, the statement for $q = 6$ follows
from~\cite[Prop.~4.4.1]{hm4}.
\qed
\end{proof}

\begin{corollary}\label{cokernellambda}The cokernel of the map induced
by the Hurewicz map 
$$\ell \colon \operatorname{TR}_3^n(\mathbb{S};2) \to 
\operatorname{TR}_3^n(\mathbb{Z};2)$$
is equal to $\mathbb{Z}/2\mathbb{Z} \cdot \lambda$.
\end{corollary}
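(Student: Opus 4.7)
The plan is to run an induction on $n$ using the fundamental long-exact sequence of Prop.~\ref{fundamentalcofibrationsequence} together with the comparison map induced by $\ell \colon \mathbb{S} \to \mathbb{Z}$.

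First I would observe that since $\mathbb{H}_2(C_{2^{n-1}},T(\mathbb{Z}))=0$ by Prop.~\ref{orbitsintegers} and $\operatorname{TR}_4^{n-1}(\mathbb{Z};2)=0$ by Prop.~\ref{evenzero}, the fundamental long-exact sequence for $A=\mathbb{Z}$ (with $m=n-1$, $n=1$) collapses to a short-exact sequence
$$0 \to \mathbb{H}_3(C_{2^{n-1}},T(\mathbb{Z})) \xrightarrow{N} \operatorname{TR}_3^n(\mathbb{Z};2) \xrightarrow{R} \operatorname{TR}_3^{n-1}(\mathbb{Z};2) \to 0.$$
For $A=\mathbb{S}$, the Segal--tom~Dieck splitting recalled in Sect.~\ref{TRintro} furnishes a section of $R$, so the corresponding long-exact sequence likewise breaks into a short-exact sequence. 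The Hurewicz map $\ell$ is natural and so produces a map between these two short-exact sequences.

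Next, the key point is to check that the left vertical arrow
$$\ell_* \colon \mathbb{H}_3(C_{2^{n-1}}, T(\mathbb{S})) \to \mathbb{H}_3(C_{2^{n-1}}, T(\mathbb{Z}))$$
is surjective. Indeed, the class $\xi_{3,n-1} \in \mathbb{H}_3(C_{2^{n-1}}, T(\mathbb{Z}))$ was defined precisely as the image of the sphere-level class $\xi_{3,n-1}$, and by Prop.~\ref{orbitsintegers} it generates the target (which is $\mathbb{Z}/2^n\mathbb{Z}$ for $n\geqslant 2$ and $\mathbb{Z}/2\mathbb{Z}\cdot\lambda$ for $n=1$, where in the latter case one checks separately that $\nu\mapsto 2\lambda = 0$). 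This comparison with the generators produced in Prop.~\ref{orbitssphere} is the one spot where one must look up a class-by-class identification; I expect this to be the main (though not deep) obstacle.

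Having verified surjectivity on the $\mathbb{H}$-level, the snake lemma applied to the map of short-exact sequences gives an exact sequence of cokernels
$$0 \to \operatorname{coker}\bigl(\ell_*^{\,n}\bigr) \to \operatorname{coker}\bigl(\ell_*^{\,n-1}\bigr) \to 0,$$
so that the cokernel of $\ell \colon \operatorname{TR}_3^n(\mathbb{S};2) \to \operatorname{TR}_3^n(\mathbb{Z};2)$ is canonically identified with that of $\operatorname{TR}_3^{n-1}$.

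Finally I would handle the base case $n=1$ using B\"okstedt's calculation: $\operatorname{TR}_3^1(\mathbb{S};2)=\mathbb{Z}/8\mathbb{Z}\cdot\nu$ maps via $\ell$ into $\operatorname{TR}_3^1(\mathbb{Z};2)=\mathbb{Z}/2\mathbb{Z}\cdot\lambda$ by the rule $\nu = 2\lambda$, and since $2\lambda=0$ in $\mathbb{Z}/2\mathbb{Z}$, the map is zero and the cokernel is $\mathbb{Z}/2\mathbb{Z}\cdot\lambda$. The induction then concludes the proof.
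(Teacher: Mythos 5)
Your proposal is correct and follows the same induction on $n$ via the map of short-exact sequences coming from Prop.~\ref{fundamentalcofibrationsequence} that the paper uses, with the same observation that $\xi_{3,n-1}$ is by definition in the image of $\ell_*$ so the left vertical arrow is surjective. The only (harmless) variation is in the base case $n=1$: you deduce $\ell(\nu)=2\lambda=0$ from B\"{o}kstedt's computation and the compatibility of $\ell$ with the cyclotomic trace, whereas the paper instead observes that $\operatorname{TR}^1(\mathbb{Z};2)=T(\mathbb{Z})$ is an $H\mathbb{Z}$-module spectrum and hence a generalized Eilenberg--MacLane spectrum, so $\ell$ vanishes in all positive degrees at once.
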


\begin{proof}The proof is by induction on $n \geqslant 1$. In the case
$n = 1$, the Hurewicz map induces the zero map
$\smash{ \operatorname{TR}_q^1(\mathbb{S};2) \to
\operatorname{TR}_q^1(\mathbb{Z};2) }$, for all positive 
integers $q$. Indeed, the spectrum $\smash{
\operatorname{TR}^1(\mathbb{Z};2) }$ is a module spectrum over the
Eilenberg-MacLane spectrum for $\mathbb{Z}$ and therefore is weakly
equivalent to a product of Eilenberg-MacLane spectra. As we recalled
above, $\operatorname{TR}_3^1(\mathbb{Z};2) = \mathbb{Z}/2\mathbb{Z}
\cdot \lambda$, which proves the case $n = 1$. To prove the induction
step, we use that the Hurewicz map induces a map of fundamental
long-exact sequences which takes the form
$$\xymatrix{
{ 0 } \ar[r] &
{ \mathbb{H}_3(C_{2^{n-1}}, T(\mathbb{S})) } \ar[r] \ar[d] &
{ \operatorname{TR}_3^n(\mathbb{S};2) } \ar[r] \ar[d] &
{ \operatorname{TR}_3^{n-1}(\mathbb{S};2) } \ar[r] \ar[d] &
{ 0 } \cr
{ 0 } \ar[r] &
{ \mathbb{H}_3(C_{2^{n-1}}, T(\mathbb{Z})) } \ar[r] &
{ \operatorname{TR}_3^n(\mathbb{Z};2) } \ar[r] &
{ \operatorname{TR}_3^{n-1}(\mathbb{Z};2) } \ar[r] &
{ 0 } \cr
}$$
The zero on the lower right-hand side follows from
Prop.~\ref{orbitsintegers}, and the zero on the lower left-hand side 
from Prop.~\ref{evenzero}. Since Props.~\ref{orbitssphere}
and~\ref{orbitsintegers} show that the left-hand vertical map is
surjective, the induction step follows.
\qed
\end{proof}

We owe the proof of the following result to Marcel B\"{o}kstedt.

\begin{lemma}\label{bokstedtlemma}The square of homotopy groups with
$\mathbb{Z}_2$-coefficients
$$\xymatrix{
{ K_5(\mathbb{S};\mathbb{Z}_2) } \ar[r] \ar[d] &
{ K_5(\mathbb{S}_2;\mathbb{Z}_2) } \ar[d] \cr
{ K_5(\mathbb{Z};\mathbb{Z}_2) } \ar[r] &
{ K_5(\mathbb{Z}_2;\mathbb{Z}_2), } \cr
}$$
where the vertical maps are induced by the Hurewicz maps and the
horizontal maps are induced by the completion maps, takes the from
$$\xymatrix{
{ \mathbb{Z}_2 \cdot 8\kappa } \ar[r] \ar[d] &
{ \mathbb{Z}_2 \cdot (4\kappa + \tau) } \ar[d] \cr
{ \mathbb{Z}_2 \cdot \kappa } \ar[r] &
{ \mathbb{Z}_2 \cdot \kappa \oplus \mathbb{Z}/2\mathbb{Z} \cdot \tau } \cr
}$$
\end{lemma}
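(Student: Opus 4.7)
The plan is to combine known $2$-primary K-theory computations with Dundas's invariance theorem for the cyclotomic trace. For the bottom row, $K_5(\mathbb{Z};\mathbb{Z}_2) = \mathbb{Z}_2 \cdot \kappa$ is classical (Lee--Szczarba, Borel, Rognes--Weibel), and $K_5(\mathbb{Z}_2;\mathbb{Z}_2) = \mathbb{Z}_2 \cdot \kappa \oplus \mathbb{Z}/2\mathbb{Z} \cdot \tau$ is due to B\"okstedt--Madsen, the extra $\mathbb{Z}/2$-summand being generated by a class $\tau$ arising from the $2$-torsion unit $-1 \in \mu_2 \subset \mathbb{Z}_2^{\times}$. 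The lower horizontal map sends $\kappa$ to $\kappa$ by compatibility of the chosen generators.

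For the top row, I would apply Dundas's theorem to the Hurewicz maps $\ell \colon \mathbb{S} \to \mathbb{Z}$ and $\ell_2 \colon \mathbb{S}_2 \to \mathbb{Z}_2$ to obtain $2$-complete weak equivalences
$$K(\mathbb{S},I)_2^\wedge \simeq \operatorname{TC}(\mathbb{S},I;2), \qquad K(\mathbb{S}_2,I_2)_2^\wedge \simeq \operatorname{TC}(\mathbb{S}_2,I_2;2).$$
Since topological cyclic homology depends only on the $2$-completion of its input, the two right-hand sides are canonically equivalent, and hence so is the relative K-theory after $2$-completion. This promotes the square of the statement to a commutative ladder of long-exact sequences of homotopy groups, in which the columns corresponding to relative K-theory are isomorphisms.

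Rognes's computation of the $2$-primary K-theory of the sphere spectrum (as cited in Section~\ref{integerssection}) identifies $K_5(\mathbb{S};\mathbb{Z}_2) \cong \mathbb{Z}_2$ with image $\mathbb{Z}_2 \cdot 8\kappa$ in $K_5(\mathbb{Z};\mathbb{Z}_2)$. Combined with the vanishing $K_6(\mathbb{Z};\mathbb{Z}_2) = 0$ from Quillen--Lichtenbaum, a diagram chase in the ladder shows that $K_5(\mathbb{S},I;\mathbb{Z}_2) = 0$ and that the boundary $\partial(\kappa)$ generates a cyclic subgroup of order $8$ in $K_4(\mathbb{S},I;\mathbb{Z}_2)$. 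Transporting along the isomorphism of relative K-theories then gives $K_5(\mathbb{S}_2,I_2;\mathbb{Z}_2) = 0$, so $K_5(\mathbb{S}_2;\mathbb{Z}_2) \hookrightarrow K_5(\mathbb{Z}_2;\mathbb{Z}_2)$ is injective with image equal to the kernel of $\partial \colon K_5(\mathbb{Z}_2;\mathbb{Z}_2) \to K_4(\mathbb{S}_2,I_2;\mathbb{Z}_2) \cong \mathbb{Z}/8\mathbb{Z}$.

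The main obstacle is to pin down this kernel as the cyclic subgroup generated by $4\kappa + \tau$; equivalently, to show $\partial(\tau) = 4\partial(\kappa)$ in $K_4(\mathbb{S}_2,I_2;\mathbb{Z}_2) \cong \mathbb{Z}/8\mathbb{Z}$, so that $\partial(\tau)$ is the unique non-zero element of order $2$. This is the B\"okstedt input of the lemma. The class $\tau$ is constructed from $-1 \in \mu_2$ via the inclusion of unit groups followed by multiplication by a topological Bott element; its image in the relative K-theory can be traced through the cyclotomic trace to relative $\operatorname{TC}$ and computed explicitly using the fundamental theorem of Section~\ref{fundamentaltheoremsection} together with the description of the groups $\operatorname{TR}_q^n(\mathbb{Z};2)$ in low degrees given in the present section.
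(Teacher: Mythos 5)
Your high-level strategy -- invoke Dundas to identify the $2$-complete relative $K$-theories $K(\mathbb{S},I)$ and $K(\mathbb{S}_2,I_2)$, then chase the ladder of long-exact sequences -- is a legitimate and in fact rather clean way to organize the deduction, and it is different in flavour from what the paper does. The paper instead works directly with B\"okstedt's homotopy-commutative diagram of fibration sequences involving $G/O$, $BSO$, $BSG$, $SG$, and $SJ$, together with Waldhausen's comparison square $G/O \to \Omega\operatorname{Wh}^{\operatorname{Diff}}(*)$ versus $\operatorname{Fib}(s) \to \Omega K(\mathbb{Z})$, and computes the relevant map of $\pi_4$'s with $\mathbb{Z}_2$-coefficients by hand, using $\pi_4(G/O) \to \pi_4(BSO) \to \pi_4(BSG)$ and the fact that $\pi_4(SG)$ and $\pi_5(SG)$ vanish.

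The genuine gap in your proposal is the sentence asserting that Rognes identifies the image of $K_5(\mathbb{S};\mathbb{Z}_2) \to K_5(\mathbb{Z};\mathbb{Z}_2)$ as $\mathbb{Z}_2 \cdot 8\kappa$. The cited theorem of Rognes gives only that $K_5(\mathbb{S};\mathbb{Z}_2)$ is free of rank one; the index-eight statement about the Hurewicz map is precisely the content that requires proof, and it is the step for which the paper goes through B\"okstedt's $G/O$ analysis and Waldhausen's square. Everything downstream in your outline (the vanishing of $K_5(\mathbb{S},I;\mathbb{Z}_2)$, the order of $\partial(\kappa)$, the identification of $\ker\partial \subset K_5(\mathbb{Z}_2;\mathbb{Z}_2)$) follows from that one input, so by taking it as given you have assumed the crux of the lemma.

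A smaller point: what you call ``the main obstacle'' -- showing $\partial(\tau) = 4\partial(\kappa)$ -- is not an obstacle at all once the index-eight input is in place. Since $K_5(\mathbb{S}_2;\mathbb{Z}_2)$ is a free $\mathbb{Z}_2$-module and $K_5(\mathbb{S}_2,I_2;\mathbb{Z}_2) = 0$, the image of $K_5(\mathbb{S}_2;\mathbb{Z}_2)$ in $K_5(\mathbb{Z}_2;\mathbb{Z}_2)$ is a torsion-free subgroup equal to $\ker\partial$; in particular $\tau \notin \ker\partial$, so $\partial(\tau)$ is the unique element of order two in the cyclic group $\mathbb{Z}/8\mathbb{Z} \cdot \partial(\kappa)$, which forces $\partial(\tau) = 4\partial(\kappa)$. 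No cyclotomic-trace or $\operatorname{TR}$ computation is needed for this step. The real work remains where you glossed over it, in establishing that the Hurewicz map on $K_5$ with $\mathbb{Z}_2$-coefficients is multiplication by $8$.
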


\begin{proof}It was proved in~\cite[Prop.~4.2]{rognes1} that the group 
$K_5(\mathbb{Z}_2;\mathbb{Z}_2)$ is the direct sum of a free
$\mathbb{Z}_2$-module of rank one generated by $\kappa$ and a torsion
subgroup of order $2$; the class $\tau$ is the unique generator of the
torsion subgroup. Moreover,~\cite[Thm.~5.8]{rognes2}
shows that the group $K_5(\mathbb{S};\mathbb{Z}_2)$ is a free
$\mathbb{Z}_2$-module of rank one, and~\cite[Thm.~2.11]{rognes2}
and~\cite[Thm.~5.17]{bokstedthsiangmadsen} show that the group 
$K_5(\mathbb{S}_2;\mathbb{Z}_2)$ is a free $\mathbb{Z}_2$-module of rank 
one. To complete the proof of the lemma, it remains to show that the
left-hand vertical map in the diagram in the statement is equal to the
inclusion of a subgroup of index $8$. This is essentially proved
in~\cite{bokstedt2} as we now explain. In op.~cit., B\"{o}kstedt
constructs a homotopy commutative diagram of pointed spaces
$$\xymatrix{
{ G/O } \ar[r]^{e} \ar[d] &
{ \operatorname{Fib}(s) } \ar[d] &
{ \operatorname{Fib}(t) } \ar[l]_{g} \ar[d] \cr
{ BSO } \ar@{=}[r] \ar[d] &
{ BSO } \ar@{=}[r] \ar[d]^{s} &
{ BSO } \ar[d]^{t} \cr
{ BSG } \ar[r]^{\eta} &
{ SG } &
{ SJ } \ar[l] \cr
}$$
in which the columns are fibration sequences. The induced diagram of
$4$th homotopy groups with $\mathbb{Z}_2$-coefficients is isomorphic to the
diagram
$$\xymatrix{
{ \mathbb{Z}_2 } \ar[r]^{8} \ar[d]^{8} &
{ \mathbb{Z}_2 } \ar[d]^{\operatorname{id}} &
{ \mathbb{Z}_2 } \ar[l]_{\operatorname{id}} \ar[d]^{\operatorname{id}} \cr
{ \mathbb{Z}_2 } \ar@{=}[r] \ar[d] &
{ \mathbb{Z}_2 } \ar@{=}[r] \ar[d] &
{ \mathbb{Z}_2 } \ar[d] \cr
{ \mathbb{Z}/8\mathbb{Z} } \ar[r] &
{ 0 } &
{ 0. } \ar[l] \cr
}$$
We compare this diagram to the following diagram constructed by
Waldhausen.
$$\xymatrix{
{} &
{ G/O } \ar[r]^(.43){f} \ar[d]^{e} &
{ \Omega \operatorname{Wh}^{\operatorname{Diff}}(*) } \ar[d] & 
{} \cr
{ \operatorname{Fib}(t) } \ar[r]^{g} &
{ \operatorname{Fib}(s) } \ar[r] &
{ \Omega K(\mathbb{Z}) } \ar[r] &
{ \Omega JK(\mathbb{Z}). } \cr
}$$
It is proved in~\cite[p.~30]{bokstedt2} that the composition of the
lower horizontal maps in this diagram becomes a weak equivalence after
$2$-completion. Moreover, it is proved in~\cite[Thm.~7.5]{rognes2}
that the upper horizontal map induces an isomorphism of homotopy
groups with $\mathbb{Z}_2$-coefficients in degrees less than or equal to
$8$. Hence, the induced diagram of $4$th homotopy groups with
$\mathbb{Z}_2$-coefficients is isomorphic to the diagram
$$\xymatrix{
{} &
{ \mathbb{Z}_2 } \ar[r]^{\operatorname{id}} \ar[d]^{8} &
{ \mathbb{Z}_2 } \ar[d]^{8} & 
{} \cr
{ \mathbb{Z}_2 } \ar[r]^{\operatorname{id}} &
{ \mathbb{Z}_2 } \ar[r]^{\operatorname{id}} &
{ \mathbb{Z}_2 } \ar[r]^{\operatorname{id}} &
{ \mathbb{Z}_2. } \cr
}$$
The right-hand vertical map in this diagram is induced by the
composition
$$\operatorname{Wh}^{\operatorname{Diff}}(*) \to
K(\mathbb{S}) \to K(\mathbb{Z})$$
of the canonical section of the canonical map $K(\mathbb{S}) \to
\operatorname{Wh}^{\operatorname{Diff}}(*)$ and the map induced by the
Hurewicz map. The left-hand map induces an isomorphism of fifth
homotopy groups with $\mathbb{Z}_2$-coefficients because
$\pi_5(\mathbb{S};\mathbb{Z}_2)$ is zero. This completes the proof
that the map induced by the Hurewicz map $K_5(\mathbb{S};\mathbb{Z}_2)
\to K_5(\mathbb{Z};\mathbb{Z}_2)$ is the inclusion of an index eigth
subgroup. The lemma follows.
\qed
\end{proof}

We define the class $\xi_{q,s} \in 
\operatorname{TR}_q^n(\mathbb{Z};2)$ to be the image of the class
$\xi_{q,s} \in \operatorname{TR}_q^n(\mathbb{S};2)$ by the map induced
by the Hurewicz map.

\begin{theorem}\label{TRintegers}The groups
$\operatorname{TR}_q^n(\mathbb{Z};2)$ with $q \leqslant 6$ are given
by
$$\begin{aligned}
\operatorname{TR}_0^n(\mathbb{Z};2) & = \bigoplus_{0 \leqslant s < n}
\mathbb{Z}_2 \cdot V^s(1) \cr
\operatorname{TR}_1^n(\mathbb{Z};2) & = \bigoplus_{1 \leqslant s < n}
\mathbb{Z}/2^s\mathbb{Z} \cdot \xi_{1,s} \cr
\operatorname{TR}_2^n(\mathbb{Z};2) & = 0 \cr
\operatorname{TR}_3^n(\mathbb{Z};2) & = \begin{cases}
\mathbb{Z}/2\mathbb{Z} \cdot \lambda & (n = 1) \cr
\mathbb{Z}/8\mathbb{Z} \cdot \lambda \oplus \displaystyle{
\bigoplus_{2 \leqslant s < n}
\mathbb{Z}/2^{s+1}\mathbb{Z} \cdot \xi_{3,s} } & (n \geqslant 2) \cr
\end{cases} \cr
\operatorname{TR}_4^n(\mathbb{Z};2) & = 0 \cr
\operatorname{TR}_5^n(\mathbb{Z};2) & = 
\mathbb{Z}/2^{n-1}\mathbb{Z} \cdot \kappa \oplus \bigoplus_{2
  \leqslant s < n} \mathbb{Z}/2^{s-1}\mathbb{Z} \cdot (\xi_{5,s} +
\dots + \xi_{5,n-1} + 4u \kappa) \cr
\operatorname{TR}_6^n(\mathbb{Z};2) & = 0 \cr
\end{aligned}$$
where $u \in \mathbb{Z}_2^*$ is a unit.
\end{theorem}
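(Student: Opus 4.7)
The plan is to compute the groups $\operatorname{TR}_q^n(\mathbb{Z};2)$ inductively in $n$, using the fundamental long-exact sequence of Proposition \ref{fundamentalcofibrationsequence} combined with the orbit-group computation of Proposition \ref{orbitsintegers}. The even-degree cases $q = 2, 4, 6$ are immediate from Proposition \ref{evenzero}. For each odd $q \leq 5$, the vanishing in the adjacent even degrees (either from Proposition \ref{orbitsintegers} or from Proposition \ref{evenzero}) forces the long-exact sequence to collapse to a short-exact sequence
$$0 \to \mathbb{H}_q(C_{2^{n-1}}, T(\mathbb{Z})) \xrightarrow{N} \operatorname{TR}_q^n(\mathbb{Z};2) \xrightarrow{R} \operatorname{TR}_q^{n-1}(\mathbb{Z};2) \to 0,$$
and the calculation reduces to a sequence of extension problems.

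For $q = 0$ and $q = 1$ the extensions split. In degree zero, the iterated Verschiebung classes $V^s(1)$ provide a section of restriction, yielding the free $\mathbb{Z}_2$-module structure by induction. In degree one, the classes $\xi_{1,s}$ for $s < n-1$ in $\operatorname{TR}_1^n(\mathbb{Z};2)$ restrict to the corresponding classes at level $n-1$ (by naturality of the Hurewicz map applied to the compatible behavior stated in Theorem \ref{TRsphere}), which splits the sequence and gives the claimed direct sum decomposition inductively.

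For $q = 3$ the key input is Corollary \ref{cokernellambda}, which asserts that the Hurewicz image in $\operatorname{TR}_3^n(\mathbb{Z};2)$ has index two with complement generated by $\lambda$. Combined with the relation $2\lambda = \nu$ in $K_3(\mathbb{Z})$ and the order of $\nu = V^0(\nu)$ in $\operatorname{TR}_3^n(\mathbb{S};2)$ from Theorem \ref{TRsphere}, one deduces that $\lambda$ has order exactly $8$ at every level $n \geq 2$ (the base case $n = 2$ being the main one, since the order can only grow or stay the same with $n$ but is bounded above by multiplying the image of $\nu$). Because $\lambda$ arises from $K_3(\mathbb{Z})$ and the cyclotomic trace commutes with restriction, the assignment $\lambda \mapsto \lambda$ is a compatible section of $R$, while the $\xi_{3,s}$ for $2 \leq s < n-1$ lift from lower levels as in the $q = 1$ case. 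This identifies $\operatorname{TR}_3^n(\mathbb{Z};2)$ as claimed.

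The degree $q = 5$ case is the main obstacle, because the extension
$$0 \to \mathbb{Z}/2^{n-2}\mathbb{Z}\cdot\xi_{5,n-1} \oplus \mathbb{Z}/2\mathbb{Z}\cdot V^{n-2}(\kappa) \to \operatorname{TR}_5^n(\mathbb{Z};2) \to \operatorname{TR}_5^{n-1}(\mathbb{Z};2) \to 0$$
is genuinely non-split and is responsible for the twisted generators appearing in the theorem. The strategy is to use Lemma \ref{bokstedtlemma}: since $8\kappa$ is the Hurewicz image of a generator of $K_5(\mathbb{S};\mathbb{Z}_2)$, naturality of the cyclotomic trace forces $8\kappa \in \operatorname{TR}_5^n(\mathbb{Z};2)$ to be the image of a sphere class under $\ell$. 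Reading this image off via Theorem \ref{TRsphere} produces a relation of the form $8\kappa = (\text{a specific combination of } V^{n-2}(\kappa) \text{ and iterated norm-images of the } \xi_{5,s})$, which both pins down the order of $\kappa$ as $2^{n-1}$ and dictates the twisted generators $\xi_{5,s} + \cdots + \xi_{5,n-1} + 4u\kappa$. The delicate point, and where I would expect to spend the most effort, is verifying that the unit $u \in \mathbb{Z}_2^{*}$ is independent of $s$ and $n$; this requires carefully tracking the cyclotomic trace of the generator of $K_5(\mathbb{S};\mathbb{Z}_2)$ through the sphere-side calculation of Theorem \ref{TRsphere} and matching the result with the norm-map description of $V^{n-2}(\kappa)$ inside $\mathbb{H}_5(C_{2^{n-1}}, T(\mathbb{Z}))$ from Proposition \ref{orbitsintegers}.
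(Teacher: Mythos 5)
Your overall skeleton — even degrees via Prop.~\ref{evenzero}, odd degrees via the short exact sequences coming from Prop.~\ref{fundamentalcofibrationsequence} together with the orbit groups of Prop.~\ref{orbitsintegers}, and then inductive extension problems — is the same as the paper's. But two of the key order estimates, which are exactly where the work is, are not actually established by your sketch.

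\textbf{Degree three.} You propose to pin down the order of $\lambda$ using $2\lambda = \nu$ in $K_3(\mathbb{Z})$ and the order of $V^0(\nu)$ in $\operatorname{TR}_3^n(\mathbb{S};2)$. Since $\nu = V^0(\nu)$ has order $8$ on the sphere side, this only shows $2\lambda = \ell(\nu)$ has order dividing $8$, hence that $\lambda$ has order dividing $16$ — it does not give the claimed bound $8\lambda = 0$. Knowing the order of $\ell(\nu)$ exactly would require knowing the structure of $\operatorname{TR}_3^n(\mathbb{Z};2)$ already, so this reasoning is circular. The paper supplies the missing upper bound by a completely different route: it compares with the $2$-adic integers via the isomorphism $\operatorname{TR}_3^n(\mathbb{Z};2,\mathbb{Z}_2) \cong \operatorname{TR}_3^n(\mathbb{Z}_2;2,\mathbb{Z}_2)$ of~\cite[Addendum~6.2]{hm}, and then uses Rognes' result~\cite[Prop.~4.2]{rognes1} that the image of $\lambda$ in $K_3(\mathbb{Z}_2;\mathbb{Z}_2)$ has order exactly $8$. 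Without this comparison your argument cannot rule out $\lambda$ having order $16$. (It also needs the cyclic-summand count from~\cite[Lemma~9.4]{rognes} to conclude, once the orders of the generators are bounded, that they generate with no extraneous relations.)

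\textbf{Degree five.} You correctly identify Lemma~\ref{bokstedtlemma} as the input that generates the twisted relation, but there is a substantive missing step before one even gets to the extension problem: one must first show that $\kappa, \xi_{5,2}, \dots, \xi_{5,n-1}$ actually generate $\operatorname{TR}_5^n(\mathbb{Z};2)$. This is not automatic from Prop.~\ref{orbitsintegers}, because the orbit group $\mathbb{H}_5(C_{2^{n-1}},T(\mathbb{Z}))$ contains the class $V^{n-2}(\kappa)$ which is not in that list; one must prove its image under the norm map vanishes mod $2$. The paper does this by comparing the skeleton spectral sequence to the Tate spectral sequence via the map $\partial^h$ of~\cite[(6.1)]{bokstedtmadsen}, and invoking Rognes' identification of an infinite cycle $u_{n-1}t^{-2}e_3$ in~\cite[Thm.~8.14]{rognes}. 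Your sketch silently assumes generation. Secondly, the order $2^{n-1}$ of $\kappa$ is established in the paper by a separate Frobenius-induction argument (applying $F$ to a presumed relation $2^{n-2}\kappa = a\,\xi_{5,n-1} + b\,V^{n-2}(\kappa)$ and using $F(\kappa) = 0$ inductively), not by Lemma~\ref{bokstedtlemma} directly. Finally, the relation $2(\xi_{5,2} + \dots + \xi_{5,n-1} + 4u\kappa) = 0$ is not obtained just from ``naturality of the trace'': the paper first computes that the kernel of $1 - F$ on $\operatorname{TR}_5(\mathbb{S};2)$ is free of rank one on $\Delta = (\xi_{5,2} + \dots + \xi_{5,n-1})$ (using Thm.~\ref{TRsphere}), and that the boundary map in the long-exact $\operatorname{TC}$ sequence hits $\Delta$ up to a unit multiple of $4\kappa + \tau$, from which the relation follows since $2\tau = 0$. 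These three steps — generation, order of $\kappa$, and the $\ker(1-F)$ analysis — are each nontrivial, and none is present in your outline.

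Your observation about the unit $u$ being independent of $s$ and $n$ is correctly flagged as delicate, but in the paper's approach this comes out for free from the fact that the twisted generators are defined by a single element $\Delta + u(4\kappa + \tau) \in \ker(1-F) \subset \operatorname{TR}_5(\mathbb{S};2)$, which is compatible with restriction by construction; it is not an extra verification once the framework above is in place.
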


\begin{proof}The map induced by the Hurewicz map is an isomorphism,
for $q = 0$, so the statement for the group
$\operatorname{TR}_0^n(\mathbb{Z};2)$ follows from
Thm.~\ref{TRsphere}. The statement for $q = 1$ follows 
from Prop.\ref{orbitsintegers} and from the fact that the generator
$\xi_{1,s}$ is annihilated by $2^s$. For $q = 3$, the case $n = 1$ was
recalled at the beginning of the section, so suppose that $n \geqslant
2$. We know from Prop.~\ref{orbitsintegers} that the two sides of the
statement are groups of the same order. We also know that both groups
are the direct sum of $n-1$ cyclic groups. Indeed, this is trivial,
for the right-hand side, and is proved in~\cite[Lemma~9.4]{rognes},
for the left-hand side. Now, it follows from Thm.~\ref{TRsphere} that
$\xi_{3,s}$ is annihilated by $2^{s+1}$, so it suffices to show that
$\lambda$ is annihilated by $8$. We have a commutative diagram
$$\xymatrix{
{ K_3(\mathbb{Z};\mathbb{Z}_2) } \ar[r] \ar[d] &
{ \operatorname{TR}_3^n(\mathbb{Z};2,\mathbb{Z}_2) } \ar[d] \cr
{ K_3(\mathbb{Z}_2;\mathbb{Z}_2) } \ar[r] &
{ \operatorname{TR}_3^n(\mathbb{Z}_2;2,\mathbb{Z}_2) } \cr
}$$
where the horizontal maps are the cyclotomic trace maps, where the
vertical maps are induced by the completion maps, and where we have
explicitly indicated that we are considering the homotopy groups with
$\mathbb{Z}_2$-coefficients. The right-hand vertical map is an
isomorphism by~\cite[Addendum~6.2]{hm}. Therefore, it suffices to show
that the image of $\lambda$ in $K_3(\mathbb{Z}_2;\mathbb{Z}_2)$ has
order $8$. But this is proved in~\cite[Prop.~4.2]{rognes1}. 

It remains to prove the statement for $q = 5$. We first show that
$\operatorname{TR}_5^n(\mathbb{Z};2)$ is generated by the classes 
$\kappa$, $\xi_{5,2}$, \dots, $\xi_{5,n-1}$, or equivalently, that the
group
$$\operatorname{TR}_5^n(\mathbb{Z};2) /
2\operatorname{TR}_5^n(\mathbb{Z};2)
\xrightarrow{\sim}
\operatorname{TR}_5^n(\mathbb{Z};2,\mathbb{F}_2)$$
is generated by the images of the classes $\kappa$, $\xi_{5,2}$,
\dots, $\xi_{5,n-1}$. We prove this by induction on $n \geqslant
2$. The case $n = 2$ is true, so we assume the statement for $n-1$ and
prove it for $n$. The fundamental long-exact sequence takes the form
$$\mathbb{H}_5(C_{2^{n-1}}, T(\mathbb{Z}); \mathbb{F}_2) \xrightarrow{N} 
\operatorname{TR}_5^n(\mathbb{Z};2, \mathbb{F}_2) \xrightarrow{R}
\operatorname{TR}_5^{n-1}(\mathbb{Z};2, \mathbb{F}_2) \to 0.$$
Inductively, the right-hand group is generated by the classes $\kappa$,
$\xi_{5,2}$, \dots, $\xi_{5,n-2}$, which are the images by the
restriction map of the classes $\kappa$, $\xi_{5,2}$, \dots,
$\xi_{5,n-2}$ in the middle group. Moreover,
Prop.~\ref{orbitsintegers} shows that the left-hand group is generated
by the classes $V^{n-2}(\kappa)$ and $\xi_{5,n-1}$. Hence, it will
suffice to show that, for $n \geqslant 3$, the image of the class
$V^{n-2}(\kappa)$ in
$\operatorname{TR}_5^n(\mathbb{Z};2,\mathbb{F}_2)$ is zero. This
follows from~\cite[Thm.~8.14]{rognes} as we now explain. We have the
commutative diagram with exact rows
$$\xymatrix{
{ \operatorname{TR}_{q+1}^{n-1}(\mathbb{Z};2,\mathbb{F}_2) }
\ar[r]^(.47){\partial} \ar[d]^{\hat{\Gamma}} &
{ \mathbb{H}_q(C_{2^{n-1}},T(\mathbb{Z});\mathbb{F}_2) } \ar[r]^(.56){N}
\ar@{=}[d] &
{ \operatorname{TR}_q^n(\mathbb{Z};2,\mathbb{F}_2) } \ar[d]^{\Gamma}
\cr
{ \hat{\mathbb{H}}^{-q-1}(C_{2^{n-1}},T(\mathbb{Z});\mathbb{F}_2) }
\ar[r]^(.54){\partial^h} &
{ \mathbb{H}_q(C_{2^{n-1}},T(\mathbb{Z});\mathbb{F}_2) } \ar[r]^{N^h} &
{ \mathbb{H}^{-q}(C_{2^{n-1}},T(\mathbb{Z});\mathbb{F}_2) } \cr
}$$
considered first in~\cite[(6.1)]{bokstedtmadsen}. It is follows
from~\cite[Thms.~0.2, 0.3]{rognes} that the left-hand vertical map
$\hat{\Gamma}$ is an isomorphism, for all integers $q + 1 \geqslant 0$
and $n \geqslant 1$. Hence, it suffices to show that the class
$V^{n-2}(\kappa)$ in the lower middle group is in the image of the
lower left-hand horizontal map $\partial^h$. The lower left-hand group
is the abutment of the strongly convergent, upper half-plan Tate
spectral sequence
$$\hat{E}_{s,t}^2 = \hat{H}^{-s}(C_{2^{n-1}},
\operatorname{TR}_t^1(\mathbb{Z};2,\mathbb{F}_2)) 
\Rightarrow \hat{\mathbb{H}}^{-s-t}(C_{2^{n-1}}, T(\mathbb{Z});
\mathbb{F}_2),$$
and the middle groups are the abutment of the strongly convergent,
first quadrant skeleton spectral sequence
$$E_{s,t}^2 = H_s(C_{2^{n-1}},
\operatorname{TR}_t^1(\mathbb{Z};2,\mathbb{F}_2))
\Rightarrow
\mathbb{H}_{s+t}(C_{2^{n-1}},T(\mathbb{Z});\mathbb{F}_2).$$
Moreover, the map $\partial^h$ induces a map of spectral sequences
$$\partial^{h,r} \colon \hat{E}_{s,t}^r \to E_{s-1,t}^r$$
which is an isomorphism, for $r = 2$ and $s \geqslant 1$. Suppose
that the homotopy class $\tilde{x} \in
\mathbb{H}_q(C_{2^{n-1}},T(\mathbb{Z});\mathbb{F}_2)$ is represented 
by the infinite cycle $x \in E_{s,t}^2$, and let $y \in
\hat{E}_{s+1,t}^2$ be the unique element with
$\smash{ \partial^{h,2}(y) = x }$. Then, if $y$ is an infinite
cycle, there exists a homotopy class $\tilde{y} \in
\hat{\mathbb{H}}^{-q-1}(C_{2^{n-1}},T(\mathbb{Z});\mathbb{F}_2)$
represented by $y$ such that $\partial^h(\tilde{y}) = \tilde{x}$;
compare~\cite[Thm.~2.5]{bokstedtmadsen}. We now return
to~\cite[Thm.~8.14]{rognes}. The homotopy class $V^{n-2}(\kappa)$ is
represented by the unique generator of $E_{2,3}^2$ which, in turn, is
the image by the map $\smash{ \partial^{h,2} }$ of the unique
generator of $\smash{ \hat{E}_{3,3}^{h,2} }$. In loc.~cit., the latter
generator is given the name $\smash{ u_{n-1}t^{-2}e_3 }$ and proved to
be an infinite cycle for $n \geqslant 3$. This shows that the image of
the class $V^{n-2}(\kappa)$ by the norm map
$$N \colon \mathbb{H}_5(C_{2^{n-1}},T(\mathbb{Z});\mathbb{F}_2) \to
\operatorname{TR}_5^n(\mathbb{Z};2,\mathbb{F}_2)$$
is zero as stated. We conclude that $\kappa,\xi_{5,2}, \dots,
\xi_{5,n-1}$ generate $\operatorname{TR}_5^n(\mathbb{Z};2)$.

We know from Thm.~\ref{TRsphere} that $\xi_{5,s}$ is annihilated by
$2^s$ and further claim that $\kappa$ is annihilated by $2^{n-1}$ and
that, for some unit $u \in \mathbb{Z}_2^*$, 
$$2 \cdot (\xi_{5,2} + \dots + \xi_{5,n-1} + 4u\kappa) = 0.$$
This implies the statement of the theorem for $q = 5$. Indeed, the
abelian group generated by $\kappa, \xi_{5,2}, \dots, \xi_{5,n-1}$ and
subject to the relations above is equal to
$$\mathbb{Z}/2^{n-1}\mathbb{Z} \cdot \kappa \oplus \bigoplus_{2
  \leqslant s < n} \mathbb{Z}/2^{s-1}\mathbb{Z} \cdot (\xi_{5,s} +
\dots + \xi_{5,n-1} + 4u\kappa)$$
and surjects onto $\operatorname{TR}_5^n(\mathbb{Z};2)$. Hence, it
suffices to show that the two groups have the same order. But this
follows by an induction argument based on the exact sequence
$$0 \to \mathbb{H}_5(C_{2^{n-1}},T(\mathbb{Z})) \to
\operatorname{TR}_5^n(\mathbb{Z};2) \to
\operatorname{TR}_5^{n-1}(\mathbb{Z};2) \to 0$$
and Prop.~\ref{orbitsintegers} above.

It remains to prove the claim. We first show that the class $2^{n-1}
\cdot \kappa$ is zero by induction on $n \geqslant 2$. The case $n =
2$ is true, so we assume the statment for $n-1$ and prove it for $n$. 
We again use the exact sequence
$$0 \to \mathbb{H}_5(C_{2^{n-1}},T(\mathbb{Z})) \to
\operatorname{TR}_5^n(\mathbb{Z};2) \to
\operatorname{TR}_5^{n-1}(\mathbb{Z};2) \to 0$$
and the calculation of the left-hand group in
Prop.~\ref{orbitsintegers}. The inductive hypothesis implies that the
image of the class $2^{n-2} \cdot \kappa$ by the right-hand map is
zero, and hence, this class is in the image of the left-hand map. It
follows that we can write
$$2^{n-2} \cdot \kappa = a \cdot \xi_{5,n-1} + b \cdot
V^{n-2}(\kappa)$$
with $a \in \mathbb{Z}/2^{n-2}\mathbb{Z}$ and $b \in
\mathbb{Z}/2\mathbb{Z}$. We apply the Frobenius map
$$F \colon \operatorname{TR}_5^n(\mathbb{Z};2) \to 
\operatorname{TR}_5^{n-1}(\mathbb{Z};2)$$
to this equation. The image of the left-hand side is zero, by
induction, and the image of the right-hand side is $\bar{a} \cdot
\xi_{5,n-2}$, where $\bar{a} \in \mathbb{Z}/2^{n-3}\mathbb{Z}$ is
reduction of $a$ modulo $2^{n-3}$. It follows that $\bar{a}$ is zero,
or equivalently, that $a \in
2^{n-3}\mathbb{Z}/2^{n-2}\mathbb{Z}$. This shows that $2^{n-1} \cdot
\kappa$ is zero as desired.

Finally, to prove the relation $2 \cdot (\xi_{5,2} + \dots \xi_{5,n-1}
+ 4u\kappa) = 0$, we consider the following long-exact sequence
$$\cdots \to \operatorname{TR}_6(\mathbb{S};2) \xrightarrow{1-F}
\operatorname{TR}_6(\mathbb{S};2) \xrightarrow{\partial}
K_5(\mathbb{S}_2;\mathbb{Z}_2) \xrightarrow{\operatorname{tr}}
\operatorname{TR}_5(\mathbb{S};2) \xrightarrow{1-F}
\operatorname{TR}_5(\mathbb{S};2) \to \cdots$$
We know from Lemma~\ref{bokstedtlemma} above that the group
$K_5(\mathbb{S}_2;\mathbb{Z}_2)$ is a free $\mathbb{Z}_2$-module of
rank one generated by the class $4\kappa + \tau$. Moreover, it follows
from Thm.~\ref{TRsphere} that the left-hand map $1-F$ is surjective
and that the kernel of the right-hand map $1-F$ is isomorphic to a free
$\mathbb{Z}_2$-module of rank one generated by the element $\Delta =
( \Delta^{(n)} )$ with $\Delta^{(n)} = \xi_{5,2} + \dots + \xi_{5,n-1}$.
It follows that there exists a unit $u \in \mathbb{Z}_2^*$ such that
$\Delta + u(4\kappa + \tau) = 0$ in
$\operatorname{TR}_5(\mathbb{S};2)$. But then $2(\Delta + 4u\kappa) =
0$, since $2(4\kappa + \tau) = 8\kappa$. This completes the proof.
\qed
\end{proof}

\begin{corollary}\label{cokernelkappa}The cokernel of the map induced
by the Hurewicz map
$$\ell \colon \operatorname{TR}_5^n(\mathbb{S};2) \to 
\operatorname{TR}_5^n(\mathbb{Z};2)$$
is equal to $\mathbb{Z}/2^v\mathbb{Z} \cdot \kappa$, where $v =
v(n-1)$ is the smaller of $3$ and $n-1$.
\end{corollary}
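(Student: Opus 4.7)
The strategy is to combine the explicit descriptions in Theorems~\ref{TRsphere} and~\ref{TRintegers}. The Hurewicz map carries the sphere-side class $\xi_{5,s}$ to the integer-side class $\xi_{5,s}$, so the image of $\ell$ contains the subgroup $H = \langle\xi_{5,2},\ldots,\xi_{5,n-1}\rangle \subseteq \operatorname{TR}_5^n(\mathbb{Z};2)$. Introducing the generators $w_s = \xi_{5,s}+\cdots+\xi_{5,n-1}+4u\kappa$ of Theorem~\ref{TRintegers}, the relations $\xi_{5,s}=w_s-w_{s+1}$ for $s<n-1$ and $\xi_{5,n-1}=w_{n-1}-4u\kappa$ force every $w_s$ to collapse to $4u\kappa$ in the quotient $\operatorname{TR}_5^n(\mathbb{Z};2)/H$, and the relation $2w_2=0$ then gives $8\kappa\equiv 0$. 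Combined with $2^{n-1}\kappa=0$, this yields $\operatorname{TR}_5^n(\mathbb{Z};2)/H \cong \mathbb{Z}/2^v\mathbb{Z}\cdot\kappa$ with $v=\min(3,n-1)$, so the cokernel of $\ell$ is a quotient of $\mathbb{Z}/2^v\mathbb{Z}\cdot\kappa$.

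For $n\leq 3$ the sphere group has no $V^{s-3}(\rho)$-summand, so the image of $\ell$ is exactly $H$ and we are done. For $n\geq 4$ one must verify that $\ell(V^{s-3}\rho)\in H$ for every $s$ with $3\leq s<n$. I would first handle the base case $n=4$ via a comparison of skeleton spectral sequences. The filtration-$2$ subgroup of $\mathbb{H}_5(C_8,T(\mathbb{S}))$, from the nontrivial extension $0\to\mathbb{Z}/4\to\mathbb{H}_5(C_8,T(\mathbb{S}))\to\mathbb{Z}/4\to 0$ recorded in Proposition~\ref{orbitssphere}, is generated by $\rho-2\xi_{5,3}$ and has $E^\infty$-content involving the Hopf class $\nu\in\operatorname{TR}_3^1(\mathbb{S};2)$. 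Since the Hurewicz map sends $\nu=2\lambda$ to zero in $\operatorname{TR}_3^1(\mathbb{Z};2)=\mathbb{Z}/2\mathbb{Z}\cdot\lambda$, the induced map on $E^\infty$ at position $(2,3)$ vanishes, so the filtration-$2$ subgroup on the sphere side maps into the (trivial) filtration-$1$ subgroup on the integer side. Thus $\ell_*(\rho-2\xi_{5,3})=0$, giving $\ell_*(\rho)=2\xi_{5,3}^{(\mathbb{Z})}$, and applying the norm yields $\ell(\rho^{(4)})=2\xi_{5,3}^{(4)}\in H^{(4)}$.

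For $n\geq 5$ I would close the induction via Frobenius naturality. From $F(\rho)=0$ in the homology and $FV=2$ together with the $2$-torsion of $V^{s-3}\rho^{(n)}$, we get $F(V^{s-3}\rho^{(n)})=0$, hence $F(\ell(V^{s-3}\rho^{(n)}))=0$. Writing $\ell(V^{s-3}\rho^{(n)})\equiv a\kappa^{(n)}\pmod{H^{(n)}}$ with $a\in\mathbb{Z}/2^v\mathbb{Z}$ and $v=3$, and using that $F$ sends $\kappa^{(n)}\mapsto\kappa^{(n-1)}$ and induces the identity on the cokernel $\mathbb{Z}/2^v\mathbb{Z}\cdot\kappa$ (valid for $n,n-1\geq 4$), we deduce $a\kappa^{(n-1)}\equiv 0\pmod{H^{(n-1)}}$; by the inductive hypothesis, the cokernel at level $n-1$ is also $\mathbb{Z}/2^v\mathbb{Z}\cdot\kappa$, forcing $a=0$. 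The main obstacle is the spectral sequence computation at $n=4$, where the nontriviality of the extension in Proposition~\ref{orbitssphere} means $\rho$ is not a pure filtration-$2$ class; pinning down the coefficient $2$ in $\ell_*(\rho)=2\xi_{5,3}^{(\mathbb{Z})}$ requires careful extension tracking through the Hurewicz map.
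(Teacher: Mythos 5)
Your argument is correct and follows the same overall strategy as the paper's proof: the $\xi_{5,s}$-classes account for a subgroup $H\subseteq\operatorname{TR}_5^n(\mathbb{Z};2)$ with quotient $\mathbb{Z}/2^v\mathbb{Z}\cdot\kappa$, $v=\min(3,n-1)$, so everything reduces to showing that the classes $V^{s-3}(\rho)$ contribute nothing beyond $H$. Two remarks on the comparison. First, your treatment of the base case $n=4$ is in fact more careful than the paper's wording. The paper states that $\ell$ takes $\rho\in\operatorname{TR}_5^4(\mathbb{S};2)$ to zero; but, as you correctly observe, since the extension $0\to\mathbb{Z}/4\mathbb{Z}\to\mathbb{H}_5(C_8,T(\mathbb{S}))\to\mathbb{Z}/4\mathbb{Z}\to 0$ of Prop.~\ref{orbitssphere} is nonsplit, any choice of the generator $\rho$ has nonzero image in $E^{\infty}_{5,0}$, and $\ell$ induces an isomorphism on $E^{\infty}_{5,0}$ (coming from the isomorphism $\operatorname{TR}_0^1(\mathbb{S};2)\to\operatorname{TR}_0^1(\mathbb{Z};2)$), so in fact $\ell_*(\rho)\neq 0$ in $\mathbb{H}_5(C_8,T(\mathbb{Z}))$. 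What the spectral-sequence comparison actually yields is $\ell_*(\rho-2\xi_{5,3})=0$, hence $\ell(\rho)=2\xi_{5,3}\in H$, which is exactly what the cokernel computation requires; your write-up makes this explicit where the paper's phrasing is a slight shortcut. Second, for the passage to $n\geq 5$ you use Frobenius naturality together with $F(V^{s-3}\rho)=0$ and $F(\kappa)=\kappa$, whereas the paper's reduction to $n=4$ implicitly relies on surjectivity of the restriction maps combined with the uniform bound that the cokernel is annihilated by $8$. Both arguments close the induction and are valid; yours is the more explicit of the two.
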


\begin{proof}It follows immediately from Thm.~\ref{TRintegers} that
the cokernel of the map $\ell$ is generated by the class of
$\kappa$. Moreover, since the class
$$\xi_{5,1} + \dots + \xi_{5,n-1} + 4u\kappa$$
has order $2$, it is also clear that the cokernel of the map $\ell$ is
annihilated by multiplication by $8$. Hence, it will suffice to 
show that, for $n = 4$, the cokernel of the map $\ell$ is not
annihilated by $4$, or equivalently, that the map $\ell$ takes
the class $\rho \in \operatorname{TR}_5^4(\mathbb{S};2)$ to zero. But
this follows immediately from the structure of the spectral sequences
that abuts $\mathbb{H}_5(C_8,T(\mathbb{S}))$ and
$\mathbb{H}_5(C_8,T(\mathbb{Z}))$. 
\qed
\end{proof}

\section{The groups $\operatorname{TR}_q^n(\mathbb{S},I;2)$}\label{relativesection}

We again implicitly consider homotopy groups with
$\mathbb{Z}_2$-coefficients. The Hurewicz map from the sphere spectrum
$\mathbb{S}$ to the Eilenberg MacLane spectrum $\mathbb{Z}$ for the
ring of integers induces a map of topological Hochschild
$\mathbb{T}$-spectra
$$\ell \colon T(\mathbb{S}) \to T(\mathbb{Z}).$$
In~\cite[Appendix]{bokstedtmadsen}, B\"{o}kstedt and Madsen constructs
a sequence of cyclotomic spectra
$$T(\mathbb{S},I) \xrightarrow{i}
T(\mathbb{S}) \xrightarrow{\ell}
T(\mathbb{Z}) \xrightarrow{\partial}
\Sigma T(\mathbb{S},I)$$
such that the underlying sequence of $\mathbb{T}$-spectra is a
cofibration sequence. As a consequence, the equivariant homotopy
groups
$$\operatorname{TR}_q^n(\mathbb{S},I;p) = [ S^q \wedge
(\mathbb{T}/C_{p^{n-1}})_+, T(\mathbb{S},I) ]_{\mathbb{T}}$$
come equipped with maps
$$\begin{aligned}
R \colon & \operatorname{TR}_q^n(\mathbb{S},I;p) \to
\operatorname{TR}_q^{n-1}(\mathbb{S},I;p)
\hskip7mm \text{(restriction)} \hfill\space \cr
F \colon & \operatorname{TR}_q^n(\mathbb{S},I;p) \to
\operatorname{TR}_q^{n-1}(\mathbb{S},I;p)
\hskip7mm \text{(Frobenius)} \hfill\space \cr
V \colon & \operatorname{TR}_q^{n-1}(\mathbb{S},I;p) \to
\operatorname{TR}_q^n(\mathbb{S},I;p) 
\hskip7mm \text{(Verschiebung)} \hfill\space \cr
d \colon & \operatorname{TR}_q^n(\mathbb{S},I;p) \to
\operatorname{TR}_{q+1}^n(\mathbb{S},I;p) 
\hskip7mm \text{(Connes' operator)} \hfill\space \cr
\end{aligned}$$
and all maps in the long-exact sequence of equivariant homotopy groups
induced by the cofibration sequence above,
$$\cdots \to \operatorname{TR}_q^n(\mathbb{S},I;2) \xrightarrow{i}
\operatorname{TR}_q^n(\mathbb{S};2) \xrightarrow{\ell}
\operatorname{TR}_q^n(\mathbb{Z};2) \xrightarrow{\partial}
\operatorname{TR}_{q-1}^n(\mathbb{S},I;2) \to \cdots,$$
are compatible with restriction maps, Frobenius maps, Verschiebung
maps, and Connes' operator. Moreover, this is a sequence of graded
modules over the graded ring $\operatorname{TR}_*^n(\mathbb{S};p)$.

\begin{lemma}\label{degreethreeexact}The following sequence is
exact, for all $n \geqslant 1$.
$$0 \to \mathbb{H}_3(C_{2^{n-1}}, T(\mathbb{S},I)) \xrightarrow{\,N\,}
\operatorname{TR}_3^n(\mathbb{S},I;2) \xrightarrow{\,R\,}
\operatorname{TR}_3^{n-1}(\mathbb{S},I;2) \to 0$$
\end{lemma}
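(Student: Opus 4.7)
My plan is to apply the fundamental long-exact sequence to the cyclotomic spectrum $T(\mathbb{S},I)$, obtaining
$$\operatorname{TR}_4^{n-1}(\mathbb{S},I;2) \xrightarrow{\partial} \mathbb{H}_3(C_{2^{n-1}}, T(\mathbb{S},I)) \xrightarrow{N} \operatorname{TR}_3^n(\mathbb{S},I;2) \xrightarrow{R} \operatorname{TR}_3^{n-1}(\mathbb{S},I;2) \xrightarrow{\partial} \mathbb{H}_2(C_{2^{n-1}}, T(\mathbb{S},I)).$$
It thus suffices to show that the two displayed boundary maps $\partial$ are zero. My strategy for each is the same: compare to the corresponding diagram for $T(\mathbb{S})$ via the map $i$, exploit that both boundary maps vanish for $T(\mathbb{S})$ because the Segal-tom~Dieck splitting splits the fundamental sequence (as used in the proof of Theorem~\ref{TRsphere}), and verify that the relevant vertical map is a monomorphism by means of the long-exact sequence associated with the cofibration sequence $T(\mathbb{S},I) \to T(\mathbb{S}) \to T(\mathbb{Z})$.

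For the vanishing of the first boundary, I would observe that $N$ for $T(\mathbb{S},I)$ factors through its sphere counterpart as
$$\mathbb{H}_3(C_{2^{n-1}}, T(\mathbb{S},I)) \xrightarrow{i} \mathbb{H}_3(C_{2^{n-1}}, T(\mathbb{S})) \xrightarrow{N} \operatorname{TR}_3^n(\mathbb{S};2).$$
The right-hand map is a split monomorphism, since by the Segal-tom~Dieck splitting it is the inclusion of a summand appearing in the formula of Theorem~\ref{TRsphere}. The left-hand map is injective because Proposition~\ref{orbitsintegers} gives $\mathbb{H}_4(C_{2^{n-1}}, T(\mathbb{Z})) = 0$, so the comparison long-exact sequence forces $i$ to be a monomorphism on $\mathbb{H}_3$. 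Hence the composition is injective, and consequently $N$ for $T(\mathbb{S},I)$ is injective, so the preceding boundary is zero.

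For the vanishing of the second boundary, I would use the naturality square relating the map $\partial : \operatorname{TR}_3^{n-1}(\mathbb{S},I;2) \to \mathbb{H}_2(C_{2^{n-1}}, T(\mathbb{S},I))$ to its counterpart for $T(\mathbb{S})$. The bottom $\partial$ vanishes by the Segal-tom~Dieck splitting, so it is enough to prove that $i : \mathbb{H}_2(C_{2^{n-1}}, T(\mathbb{S},I)) \to \mathbb{H}_2(C_{2^{n-1}}, T(\mathbb{S}))$ is injective. Using $\mathbb{H}_2(C_{2^{n-1}}, T(\mathbb{Z})) = 0$ from Proposition~\ref{orbitsintegers}, the comparison long-exact sequence identifies the kernel of $i$ with the cokernel of $\ell : \mathbb{H}_3(C_{2^{n-1}}, T(\mathbb{S})) \to \mathbb{H}_3(C_{2^{n-1}}, T(\mathbb{Z}))$; one then checks that this cokernel vanishes for $n \geqslant 2$ by comparing Propositions~\ref{orbitssphere} and~\ref{orbitsintegers}: the Hurewicz map sends $\xi_{3,n-1}$ to $\xi_{3,n-1}$, and this element generates the cyclic target of order $2^n$, an order which it already has on the source side.

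The case $n = 1$ is trivial, because $\operatorname{TR}_3^0(\mathbb{S},I;2) = 0$ and $N$ reduces to the change-of-groups isomorphism. The only nontrivial input is the surjectivity of $\ell$ on $\mathbb{H}_3$ for $n \geqslant 2$; everything else is formal from the fundamental long-exact sequence and the Segal-tom~Dieck splitting. The chief obstacle, therefore, is the careful order-counting already performed in Sections~\ref{spheresection} and~\ref{integerssection}, which produces matching cyclic generators on both sides.
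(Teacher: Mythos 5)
Your argument is correct and rests on the same inputs as the paper's proof---the split fundamental sequence for $T(\mathbb{S})$ provided by the Segal--tom~Dieck splitting, the vanishing of $\mathbb{H}_4(C_{2^{n-1}}, T(\mathbb{Z}))$ and $\operatorname{TR}_4^n(\mathbb{Z};2)$ from Props.~\ref{orbitsintegers} and~\ref{evenzero}, and the surjectivity of $\ell \colon \mathbb{H}_3(C_{2^{n-1}},T(\mathbb{S})) \to \mathbb{H}_3(C_{2^{n-1}},T(\mathbb{Z}))$---but it packages them differently. The paper takes the map of split short-exact fundamental sequences for $T(\mathbb{S})$ and $T(\mathbb{Z})$ from the proof of Cor.~\ref{cokernellambda}, observes that the left-hand vertical map is surjective, identifies the sequence in the statement with the sequence of kernels (using the two vanishing statements), and concludes by the snake lemma. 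You instead work directly with the fundamental long-exact sequence for $T(\mathbb{S},I)$ and kill the two boundary maps flanking the segment of interest by comparison with $T(\mathbb{S})$; this amounts to a manual unravelling of the same snake-lemma argument, so the two routes are closely parallel, yours being the more explicit and the paper's the more compact. One small inaccuracy in your exposition: for $n = 2$ the class $\xi_{3,1}$ has order $8$ in $\mathbb{H}_3(C_2,T(\mathbb{S}))$ but order $4 = 2^n$ in $\mathbb{H}_3(C_2,T(\mathbb{Z}))$, so it does not literally ``already have'' order $2^n$ on the source side; the surjectivity conclusion is nevertheless correct because the source order is at least the target order and $\ell$ carries a generator to a generator.
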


\begin{proof}From the proof of Cor.~\ref{cokernellambda} we have a map
of short-exact sequences
$$\xymatrix{
{ 0 } \ar[r] &
{ \mathbb{H}_3(C_{2^{n-1}}, T(\mathbb{S})) } \ar[r] \ar[d] &
{ \operatorname{TR}_3^n(\mathbb{S};2) } \ar[r] \ar[d] &
{ \operatorname{TR}_3^{n-1}(\mathbb{S};2) } \ar[r] \ar[d] &
{ 0 } \cr
{ 0 } \ar[r] &
{ \mathbb{H}_3(C_{2^{n-1}}, T(\mathbb{Z})) } \ar[r] &
{ \operatorname{TR}_3^n(\mathbb{Z};2) } \ar[r] &
{ \operatorname{TR}_3^{n-1}(\mathbb{Z};2) } \ar[r] &
{ 0 } \cr
}$$
and that the left-hand vertical map is surjective. Moreover,
Lemma~\ref{evenzero} and Prop.~\ref{orbitsintegers} identify the
sequence of the statement with the sequence of kernels of the vertical
maps in this diagram. This completes the proof.
\qed
\end{proof}

\begin{corollary}\label{restrictionsurjective}The restriction map
$$R \colon \operatorname{TR}_q^n(\mathbb{S},I;2) \to
\operatorname{TR}_q^{n-1}(\mathbb{S},I;2)$$
is surjective, for all $q \leqslant 4$ and all $n \geqslant 1$.
\qed
\end{corollary}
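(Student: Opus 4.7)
The plan is to extract the corollary from the fundamental long-exact sequence of Prop.~\ref{fundamentalcofibrationsequence}, which reduces surjectivity of the restriction map $R$ at degree $q$ to the vanishing of the boundary map
\begin{equation*}
\partial\colon \operatorname{TR}_q^{n-1}(\mathbb{S},I;2) \to \mathbb{H}_{q-1}(C_{2^{n-1}}, T(\mathbb{S},I)).
\end{equation*}
I will argue that $\partial$ vanishes for each of $q=0,1,2,3,4$, with only the case $q=3$ requiring serious work; that case is precisely Lemma~\ref{degreethreeexact}.

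The key tool for the other cases is naturality with respect to the inclusion $i\colon T(\mathbb{S},I) \to T(\mathbb{S})$. Since the Segal-tom~Dieck splitting sections $R$ on $\operatorname{TR}_q^n(\mathbb{S};2)$, the boundary map in the fundamental sequence for $\mathbb{S}$ is identically zero. Hence, for any $\omega \in \operatorname{TR}_q^{n-1}(\mathbb{S},I;2)$, the class $\partial(\omega)$ lies in the kernel of the natural map
\begin{equation*}
i_*\colon \mathbb{H}_{q-1}(C_{2^{n-1}}, T(\mathbb{S},I)) \to \mathbb{H}_{q-1}(C_{2^{n-1}}, T(\mathbb{S})),
\end{equation*}
and the long-exact sequence obtained by applying $\mathbb{H}_*(C_{2^{n-1}},-)$ to the cofibration sequence $T(\mathbb{S},I) \to T(\mathbb{S}) \to T(\mathbb{Z})$ identifies $\ker(i_*)$ in degree $q-1$ with the image of the connecting map from $\mathbb{H}_q(C_{2^{n-1}}, T(\mathbb{Z}))$.

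For $q \leqslant 1$, the spectrum $T(\mathbb{S},I)$ is $0$-connected (since $\ell$ is an isomorphism on $\pi_0$), and the skeleton spectral sequence gives $\mathbb{H}_{q-1}(C_{2^{n-1}}, T(\mathbb{S},I)) = 0$. For $q=2$ and $q=4$, Prop.~\ref{orbitsintegers} yields $\mathbb{H}_2(C_{2^{n-1}}, T(\mathbb{Z})) = \mathbb{H}_4(C_{2^{n-1}}, T(\mathbb{Z})) = 0$, forcing $\ker(i_*) = 0$ in the relevant degree and hence $\partial = 0$. The only genuine obstacle is $q=3$, where one instead invokes the surjectivity of $\ell_*\colon \mathbb{H}_3(C_{2^{n-1}}, T(\mathbb{S})) \to \mathbb{H}_3(C_{2^{n-1}}, T(\mathbb{Z}))$ obtained from Props.~\ref{orbitssphere} and~\ref{orbitsintegers} in the proof of Lemma~\ref{degreethreeexact}; this is what makes the connecting map $\mathbb{H}_3(C_{2^{n-1}}, T(\mathbb{Z})) \to \mathbb{H}_2(C_{2^{n-1}}, T(\mathbb{S},I))$ vanish and completes the proof.
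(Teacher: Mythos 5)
Your proof is correct, and the overall strategy matches the paper's: reduce surjectivity of $R$ to the vanishing of the boundary map $\partial$ in the fundamental long-exact sequence for $T(\mathbb{S},I)$, and establish that vanishing by playing off the splitting for $T(\mathbb{S})$ (Segal--tom~Dieck) against vanishing results for $T(\mathbb{Z})$ from Prop.~\ref{orbitsintegers}.

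The one cosmetic difference is the case $q=4$. In the paper's framing, Lemma~\ref{degreethreeexact} delivers this case for free: the exactness on the \emph{left} of the displayed short-exact sequence, i.e.\ injectivity of the norm $N\colon \mathbb{H}_3(C_{2^{n-1}},T(\mathbb{S},I)) \to \operatorname{TR}_3^n(\mathbb{S},I;2)$, is equivalent to the vanishing of $\partial\colon \operatorname{TR}_4^{n-1}(\mathbb{S},I;2)\to\mathbb{H}_3(C_{2^{n-1}},T(\mathbb{S},I))$, which is exactly surjectivity of $R$ in degree $4$. You instead re-derive $\partial = 0$ in degree $4$ by the same mechanism you use for $q=2$, namely $\ker(i_*)=0$ because $\mathbb{H}_4(C_{2^{n-1}},T(\mathbb{Z}))=0$. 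Both routes are valid and use the same ingredients; the paper's reading extracts two of the five cases ($q=3$ and $q=4$) from the single lemma, which makes the \qed immediately after the lemma self-explanatory, whereas your version treats Lemma~\ref{degreethreeexact} as the $q=3$ case only and handles $q=2,4$ in parallel. The remaining trivial cases $q\leqslant 1$ are handled identically (via $0$-connectivity of $T(\mathbb{S},I)$, or equivalently Prop.~\ref{orbitsrelative}).
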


We recall that for $n = 1$, the map $\ell$ is an isomorphism, if $q =
0$, and the zero map, if $q > 0$. It follows that the groups
$\operatorname{TR}_0^1(\mathbb{S},I;2)$,
$\operatorname{TR}_4^1(\mathbb{S},I;2)$, and
$\operatorname{TR}_5^1(\mathbb{S},I;2)$ are zero, that
$\operatorname{TR}_1^1(\mathbb{S},I;2)$ is isomorphic to
$\mathbb{Z}/2\mathbb{Z}$ generated by the unique class $\tilde{\eta}$
with $i(\tilde{\eta}) = \eta$, that $\operatorname{TR}_2^1(\mathbb{S},I;2)$
is isomorphic to $\mathbb{Z}/2\mathbb{Z} \oplus
\mathbb{Z}/2\mathbb{Z}$ generated by $\eta\tilde{\eta}$ and by the
class $\bar{\lambda} = \partial(\lambda)$, and that
$\operatorname{TR}_3^1(\mathbb{S},I;2)$ is isomorphic to
$\mathbb{Z}/8\mathbb{Z}$ generated by the unique class $\tilde{\nu}$
with $i(\tilde{\nu}) = \nu$. We note that $\eta \bar{\lambda} = 0$,
since $\operatorname{TR}_4^1(\mathbb{Z};2)$ is zero, while
$\eta^2\tilde{\eta} = 4\tilde{\nu}$. We consider the skeleton
spectral sequences
$$E_{s,t}^2 = H_s(C_{2^{n-1}}, \operatorname{TR}_t^1(\mathbb{S},I;2))
\Rightarrow \mathbb{H}_{s+t}(C_{2^{n-1}}, T(\mathbb{S},I)).$$
In the case $n = 2$, the $E^2$-term for $s + t \leqslant 5$ takes the
form
\begin{center}
\begin{tabular*}{0.85\textwidth}{@{\extracolsep{\fill}}cccccc}
0 &  &  &  &  &  \cr
0 & 0 &  &  &  &  \cr
$\mathbb{Z}/8\mathbb{Z}$ & $\mathbb{Z}/2\mathbb{Z}$ &
$4\mathbb{Z}/8\mathbb{Z}$ &  &  & \cr
$(\mathbb{Z}/2\mathbb{Z})^2$ &
$(\mathbb{Z}/2\mathbb{Z})^2$ &
$(\mathbb{Z}/2\mathbb{Z})^2$ &
$(\mathbb{Z}/2\mathbb{Z})^2$ &  &  \cr
$\mathbb{Z}/2\mathbb{Z}$ & $\mathbb{Z}/2\mathbb{Z}$ &
$\mathbb{Z}/2\mathbb{Z}$ & $\mathbb{Z}/2\mathbb{Z}$ &
$\mathbb{Z}/2\mathbb{Z}$ & \cr
0 & 0 & 0 & 0 & 0 & \hskip5mm 0 \cr
\end{tabular*}
\end{center}
The group $\smash{E_{s,1}^2}$ is generated by the class
$\tilde{\eta} z_s$, the group $\smash{E_{s,2}^2}$ by the classes
$\eta\tilde{\eta} z_s$ and $\bar{\lambda}z_s$, the group
$\smash{E_{s,3}^2}$ with $s = 0$ or $s$ an odd positive integer by the
class $\nu z_s$, and the group $\smash{E_{s,3}^2}$ with $s$ an even
positive integer by  the class $4\nu z_s$. We claim that
$d^2(\tilde{\eta}z_2) = \bar{\lambda}z_0$, or equivalently, that
Connes' operator maps
$$d\tilde{\eta} = \bar{\lambda}.$$
We show that the class $V(\bar{\lambda}) \in \mathbb{H}_2(C_2,
T(\mathbb{S},I))$ represented by $\bar{\lambda}z_0$ is zero. By
Lemma~\ref{degreethreeexact}, we may instead show that the
image $V(\bar{\lambda}) \in \operatorname{TR}_2^2(\mathbb{S},I;2)$ by
the norm map is zero. Now, $V(\bar{\lambda}) = V(\partial(\lambda))
= \partial(V(\lambda))$, and and by Prop.~\ref{TRintegers}, the class
$V(\lambda) \in \operatorname{TR}_3^2(\mathbb{Z};2)$ is either zero or
equal to $4\lambda$. But $\partial(4\lambda) = 4 \partial(\lambda) =
4\bar{\lambda}$ which is zero, by Cor.~\ref{cokernellambda}. This
proves the claim. The $d^2$-differential is now given by
Lemma~\ref{d^2-differential}. We find that the $\smash{ E^3 }$-term
for $s + t \leqslant 5$ takes the form
\begin{center}
\begin{tabular*}{0.85\textwidth}{@{\extracolsep{\fill}}cccccc}
0 &  &  &  &  &  \cr
0 & 0 &  &  &  &  \cr
$\mathbb{Z}/8\mathbb{Z}$ & $\mathbb{Z}/2\mathbb{Z}$ &
0 &  &  & \cr
$\mathbb{Z}/2\mathbb{Z}$ &
$\mathbb{Z}/2\mathbb{Z}$ &
$\mathbb{Z}/2\mathbb{Z}$ &
$\mathbb{Z}/2\mathbb{Z}$ &  &  \cr
$\mathbb{Z}/2\mathbb{Z}$ & $\mathbb{Z}/2\mathbb{Z}$ &
0 & 0 &
0 & \cr
0 & 0 & 0 & 0 & 0 & \hskip4mm 0 \cr
\end{tabular*}
\end{center}
and, for degree reasons, this is also the $E^{\infty}$-term. The group
$\smash{ E_{s,2}^3 }$ is generated by the class of
$\eta\tilde{\eta}z_s$. The class of $\bar{\lambda}z_s$ in
$\smash{ E_{s,2}^3 }$ is equal to zero, if $s$ is congruent to $0$ or
$1$ modulo $4$, and is equal to the class of $\eta \tilde{\eta}
z_s$, if $s$ is congruent to $2$ or $3$ modulo $4$.

The spectral sequences for $n \geqslant 3$ are similar with the only
difference being the groups $\smash{ E_{s,3}^r }$ with $s > 0$. In the
case $n = 3$, the $\smash{ E^{\infty} }$-term for $s + t \leqslant 5$
takes the form
\begin{center}
\begin{tabular*}{0.85\textwidth}{@{\extracolsep{\fill}}cccccc}
0 &  &  &  &  &  \cr
0 & 0 &  &  &  &  \cr
$\mathbb{Z}/8\mathbb{Z}$ & $\mathbb{Z}/4\mathbb{Z}$ &
$2\mathbb{Z}/4\mathbb{Z}$ &  &  & \cr
$\mathbb{Z}/2\mathbb{Z}$ &
$\mathbb{Z}/2\mathbb{Z}$ &
$\mathbb{Z}/2\mathbb{Z}$ &
$\mathbb{Z}/2\mathbb{Z}$ &  &  \cr
$\mathbb{Z}/2\mathbb{Z}$ & $\mathbb{Z}/2\mathbb{Z}$ &
0 & 0 &
0 & \cr
0 & 0 & 0 & 0 & 0 & \hskip4mm 0 \cr
\end{tabular*}
\end{center}
and in the case $n \geqslant 4$, it takes the form
\begin{center}
\begin{tabular*}{0.85\textwidth}{@{\extracolsep{\fill}}cccccc}
0 &  &  &  &  &  \cr
0 & 0 &  &  &  &  \cr
$\mathbb{Z}/8\mathbb{Z}$ & $\mathbb{Z}/8\mathbb{Z}$ &
$\mathbb{Z}/4\mathbb{Z}$ &  &  & \cr
$\mathbb{Z}/2\mathbb{Z}$ &
$\mathbb{Z}/2\mathbb{Z}$ &
$\mathbb{Z}/2\mathbb{Z}$ &
$\mathbb{Z}/2\mathbb{Z}$ &  &  \cr
$\mathbb{Z}/2\mathbb{Z}$ & $\mathbb{Z}/2\mathbb{Z}$ &
0 & 0 &
0 & \cr
0 & 0 & 0 & 0 & 0 & \hskip4mm 0 \cr
\end{tabular*}
\end{center}
We define $\epsilon \in \mathbb{H}_5(C_4,T(\mathbb{S},I))$ and
$\tilde{\rho} \in \mathbb{H}_5(C_8,T(\mathbb{S},I))$ to be the unique
homotopy classes that represent $2\tilde{\nu}z_2$ and
$\tilde{\nu}z_2$, respectively. We note that $V(\epsilon) =
2\tilde{\rho}$. We further define $\bar{\kappa} = \partial(\kappa) \in
\mathbb{H}_4(C_2, T(\mathbb{S},I))$.

\begin{proposition}\label{orbitsrelative}The groups
$\mathbb{H}_q(C_{2^{n-1}}, T(\mathbb{S},I))$ with $q \leqslant 5$
are given by
$$\begin{aligned}
\mathbb{H}_0(C_{2^{n-1}}, T(\mathbb{S},I)) & = 0 \cr
\mathbb{H}_1(C_{2^{n-1}}, T(\mathbb{S},I)) & =
\mathbb{Z}/2\mathbb{Z} \cdot V^{n-1}(\tilde{\eta}) \cr
\mathbb{H}_2(C_{2^{n-1}}, T(\mathbb{S},I)) & =
\mathbb{Z}/2\mathbb{Z} \cdot dV^{n-1}(\tilde{\eta}) \oplus
\mathbb{Z}/2\mathbb{Z} \cdot V^{n-1}(\eta\tilde{\eta}) \cr
\mathbb{H}_3(C_{2^{n-1}}, T(\mathbb{S},I)) & =
\mathbb{Z}/2\mathbb{Z} \cdot dV^{n-1}(\eta\tilde{\eta}) \oplus
\mathbb{Z}/8\mathbb{Z} \cdot V^{n-1}(\tilde{\nu}) \cr
\mathbb{H}_4(C_{2^{n-1}}, T(\mathbb{S},I)) & = \begin{cases}
0 & (n = 1) \cr
\mathbb{Z}/2^v\mathbb{Z} \cdot dV^{n-1}(\tilde{\nu}) \oplus
\mathbb{Z}/2\mathbb{Z} \cdot V^{n-2}(\bar{\kappa}) & (n \geqslant 2)
\cr
\end{cases} \cr
\mathbb{H}_5(C_{2^{n-1}}, T(\mathbb{S},I)) & = \begin{cases}
0 & \hskip2.1mm (n = 1) \cr
\mathbb{Z}/2\mathbb{Z} \cdot d\bar{\kappa} & \hskip2.1mm (n = 2) \cr
\mathbb{Z}/2\mathbb{Z} \cdot dV(\bar{\kappa}) \oplus
\mathbb{Z}/2\mathbb{Z} \cdot \epsilon & \hskip2.1mm (n = 3) \cr
\mathbb{Z}/2\mathbb{Z} \cdot dV^{n-2}(\bar{\kappa}) \oplus
\mathbb{Z}/4\mathbb{Z} \cdot V^{n-4}(\tilde{\rho}) & \hskip2.1mm
(n \geqslant 4) \cr 
\end{cases} \cr
\end{aligned}$$
where $v = v(n-1)$ is the smaller of $3$ and $n-1$.
\end{proposition}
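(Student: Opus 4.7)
The plan is to combine the long-exact sequence of equivariant homotopy groups associated with the cofibration $T(\mathbb{S},I) \to T(\mathbb{S}) \to T(\mathbb{Z})$,
$$\cdots \to \mathbb{H}_q(C_{2^{n-1}}, T(\mathbb{S},I)) \xrightarrow{i_*} \mathbb{H}_q(C_{2^{n-1}}, T(\mathbb{S})) \xrightarrow{\ell_*} \mathbb{H}_q(C_{2^{n-1}}, T(\mathbb{Z})) \xrightarrow{\partial} \mathbb{H}_{q-1}(C_{2^{n-1}}, T(\mathbb{S},I)) \to \cdots,$$
with the skeleton spectral sequence $E_{s,t}^2 = H_s(C_{2^{n-1}}, \operatorname{TR}_t^1(\mathbb{S},I;2)) \Rightarrow \mathbb{H}_{s+t}(C_{2^{n-1}}, T(\mathbb{S},I))$, whose $E^{\infty}$-page in the range $s + t \leqslant 5$ has been computed in the discussion preceding the statement. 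Propositions~\ref{orbitssphere} and~\ref{orbitsintegers} supply the outer terms of the long-exact sequence.

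For $q = 0, 1$ only the column $s = 0$ contributes to $E^{\infty}$, and the edge homomorphism produces the generator $V^{n-1}(\tilde\eta)$ in degree one. For $q = 2$ the surviving $E^{\infty}$-entries at $(0,2)$ and $(1,1)$ are generated by $V^{n-1}(\eta\tilde\eta)$ and by $\tilde\eta z_1$; the latter is identified with $dV^{n-1}(\tilde\eta)$ by Lemma~\ref{d^2-differential} together with the identity $d\tilde\eta = \bar\lambda$ that appeared in the spectral sequence discussion, and the direct sum decomposition is visible from the filtration. For $q = 3$ the short-exact sequence of Lemma~\ref{degreethreeexact}, combined with the $E^{\infty}$-page, inductively yields $\mathbb{Z}/2\mathbb{Z} \cdot dV^{n-1}(\eta\tilde\eta) \oplus \mathbb{Z}/8\mathbb{Z} \cdot V^{n-1}(\tilde\nu)$.

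For $q = 4$, the vanishing $\mathbb{H}_4(C_{2^{n-1}}, T(\mathbb{Z})) = 0$ from Prop.~\ref{orbitsintegers} produces a short-exact sequence
$$0 \to \operatorname{coker}(\ell_*) \xrightarrow{\partial} \mathbb{H}_4(C_{2^{n-1}}, T(\mathbb{S},I)) \xrightarrow{i_*} \mathbb{H}_4(C_{2^{n-1}}, T(\mathbb{S})) \to 0,$$
where $\ell_*$ denotes the map on $\mathbb{H}_5$. Direct inspection of Props.~\ref{orbitssphere} and~\ref{orbitsintegers} gives $\operatorname{coker}(\ell_*) = \mathbb{Z}/2\mathbb{Z} \cdot V^{n-2}(\kappa)$ for $n \geqslant 2$, whose boundary image is $V^{n-2}(\bar\kappa)$ with $\bar\kappa = \partial\kappa$; the quotient $\mathbb{Z}/2^v\mathbb{Z} \cdot \nu\xi_{1,n-1}$ lifts to $\mathbb{Z}/2^v\mathbb{Z} \cdot dV^{n-1}(\tilde\nu)$ since $i_*(dV^{n-1}(\tilde\nu)) = dV^{n-1}(\nu) = \nu\xi_{1,n-1}$ (using $\xi_{1,n-1} = dV^{n-1}(1) + V^{n-1}(\eta)$, the projection formula, and $\eta\nu = 0$). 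The extension splits because the two summands lie on distinct filtration levels of $E^{\infty}$. For $q = 5$ apply the corresponding segment of the long-exact sequence together with $\mathbb{H}_6(C_{2^{n-1}}, T(\mathbb{Z})) = \mathbb{Z}/2\mathbb{Z} \cdot dV^{n-2}(\kappa)$; the generators $\tilde\rho$ and $\epsilon$ represent $\tilde\nu z_2$ and $2\tilde\nu z_2$ on the $E^{\infty}$-page, with the relation $V(\epsilon) = 2\tilde\rho$ a consequence of the corestriction action on $H_2(C_{2^{n-1}}, \mathbb{Z}/2\mathbb{Z})$ from Lemma~\ref{homologyfinite}, and the remaining summand is $dV^{n-2}(\bar\kappa) = \partial(dV^{n-2}(\kappa))$.

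The main obstacles are pinning down the extensions for $q = 5$ in the small cases $n = 2, 3$, which demands careful bookkeeping of the $E^{\infty}$-page and compatibility of $V$, $F$, and $d$ under $\partial$, and confirming that $\ell_* \colon \mathbb{H}_5(C_{2^{n-1}}, T(\mathbb{S})) \to \mathbb{H}_5(C_{2^{n-1}}, T(\mathbb{Z}))$ has the expected cokernel, which hinges on showing $\ell_*(V^{n-4}(\rho)) = 0$ through a structural comparison of the two skeleton spectral sequences (the class $\rho$ is supported on a filtration level that maps to zero in $T(\mathbb{Z})$).
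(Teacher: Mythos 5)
Your proposal follows essentially the same route as the paper: read orders and filtration quotients off the $E^{\infty}$-page of the skeleton spectral sequence for $T(\mathbb{S},I)$, then use the long-exact sequence from the cofibration $T(\mathbb{S},I) \to T(\mathbb{S}) \to T(\mathbb{Z})$ together with Props.~\ref{orbitssphere} and~\ref{orbitsintegers} to identify generators and pin down the splittings. That is exactly the paper's strategy, and your identification of the main computational points (the cokernel of $\ell$ on $\mathbb{H}_5$, the role of $\bar{\kappa}=\partial\kappa$, the classes $\epsilon$ and $\tilde{\rho}$, and the relation $i_*(dV^{n-1}(\tilde{\nu}))=\nu\xi_{1,n-1}$ via the projection formula) is accurate.

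Three imprecisions in the justification are worth flagging. First, the identification of the surviving class $\tilde{\eta}z_1 \in E^{\infty}_{1,1}$ (and similarly $\eta\tilde{\eta}z_s$, $\tilde{\nu}z_1$, $\bar{\lambda}z_3$) with $dV^{n-1}(\tilde{\eta})$ (respectively the $dV$-classes in higher degree) is not a consequence of Lemma~\ref{d^2-differential} plus $d\tilde{\eta}=\bar{\lambda}$; those two facts only determine the $d^2$-differential. What is actually used is that Connes' operator raises skeleton filtration by one, so that the representative of $dV^{s}(x)$ sits one column to the right of that of $V^{s}(x)$; the paper invokes \cite[Prop.~4.4.1]{hm4} for exactly this. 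Second, your route to $\mathbb{H}_5$ via the boundary map requires knowing that $dV^{n-2}(\kappa)$ is not in the image of $\ell_* \colon \mathbb{H}_6(C_{2^{n-1}},T(\mathbb{S})) \to \mathbb{H}_6(C_{2^{n-1}},T(\mathbb{Z}))$; you assert $dV^{n-2}(\bar{\kappa})=\partial(dV^{n-2}(\kappa))$ without arguing it is nonzero. The paper avoids needing $\mathbb{H}_6(C_{2^{n-1}},T(\mathbb{S}))$ altogether by identifying $dV^{n-2}(\bar{\kappa})$ with $\bar{\lambda}z_3$ directly via \cite[Prop.~4.4.1]{hm4}; if you want to stick with the boundary map, you still need the spectral sequence comparison (using that $\ell$ is zero on $\operatorname{TR}_q^1$ for $q>0$) to get the nonvanishing. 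Third, "the extension splits because the two summands lie on distinct filtration levels" is not a valid inference: distinct filtration levels give the associated graded, not a splitting. The correct point, and the one the paper makes, is that the top-filtration element ($V^{n-2}(\bar{\kappa})$ for $q=4$, $dV^{n-2}(\bar{\kappa})$ for $q=5$) has order $2$, so it lifts the top filtration quotient and hence generates a direct summand.
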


\begin{proof}The statement for $q \leqslant 3$ follows immediately
from the spectral sequence above since the generators given in the
statement have the indicated orders. To prove the statement for $q =
4$, we first note that $dV^{n-1}(\tilde{\nu})$ has order
$v(n-1)$. Indeed, the class $\tilde{\nu}$ has order $8$ and
$d\tilde{\nu} = 0$. Moreover, the image of the map
$$\ell \colon \mathbb{H}_5(C_{2^{n-1}},T(\mathbb{S})) \to
\mathbb{H}_5(C_{2^{n-1}},T(\mathbb{Z}))$$
does not contain the class $V^{n-2}(\kappa)$. Indeed, this follows
immediately from the induced map of spectral sequences. It follows
that $V^{n-2}(\bar{\kappa})$ is a non-zero class of order $2$ which
is represented by the element $\eta\tilde{\eta}z_2 =
\bar{\lambda}z_2$ in the $\smash{ E^{\infty} }$-term of the spectral
sequence above. This proves the statement for $q = 4$. It remains to
prove the statement for $q = 5$. It follows
from~\cite[Prop.~4.4.1]{hm4} that the element $\eta\tilde{\eta}z_3 =
\bar{\lambda}z_3$ in the $\smash{ E^{\infty} }$-term of the spectral
sequence above represents the class $dV^{n-2}(\bar{\kappa})$. Hence,
this class is non-zero and has order $2$. Moreover, the spectral
sequence shows that the subgroup of $\mathbb{H}_5(C_{2^{n-1}},
T(\mathbb{S},I))$ generated by $dV^{n-2}(\bar{\kappa})$ is a direct
summand. This completes the proof.
\qed
\end{proof}

The following result was proved by Costeanu
in~\cite[Prop.~2.6]{costeanu}.

\begin{lemma}\label{imageofeta}The map
$$\ell \colon \operatorname{TR}_1^n(\mathbb{S};2) \to
\operatorname{TR}_1^n(\mathbb{Z};2)$$
takes the class $\eta = \eta \cdot [1]_n$ to the class $\xi_{1,1}$.
\end{lemma}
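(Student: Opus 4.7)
The approach is to identify $\ell(\eta \cdot [1]_n)$ with the cyclotomic trace of the non-trivial element $-1 \in K_1(\mathbb{Z})$, and then to compute this trace in $\operatorname{TR}_1^n(\mathbb{Z};2)$ using the Witt-vector decomposition of $[-1]_n$ together with the standard relations among $F$, $V$, and $d$ collected in Section~\ref{TRintro}. Naturality and multiplicativity of the cyclotomic trace with respect to the Hurewicz map give $\ell(\eta \cdot [1]_n) = \operatorname{tr}(\ell(\eta))$, where on the right $\ell$ denotes the induced map on $K_1$. The classical identification $\ell(\eta) = [-1]$ in $K_1(\mathbb{Z})_{(2)} = \mathbb{Z}/2\mathbb{Z}$, combined with the formula $\operatorname{tr}(u) = d\log[u]_n = [u]_n^{-1} d[u]_n$ for $u \in \mathbb{Z}^\times$, reduces the claim to proving $d\log[-1]_n = \xi_{1,1}$ in $\operatorname{TR}_1^n(\mathbb{Z};2)$.

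A ghost-coordinate calculation in $W_n(\mathbb{Z}_{(2)}) = \operatorname{TR}_0^n(\mathbb{Z};2)$ shows $[-1]_n = V([1]_{n-1}) - 1$, whence $d[-1]_n = dV([1]_{n-1})$ since $d([1]_n) = 0$. Using $[-1]_n^{-1} = [-1]_n$, the projection formula $xV(y) = V(F(x)y)$, and the relation $FdV = d + (p-1)\eta$ at $p = 2$, we arrive at the key identity
$$d\log[-1]_n = V(\ell(\eta \cdot [1]_{n-1})) - dV([1]_{n-1}).$$
The proof is then completed by induction on $n \geqslant 2$. For the base case $n = 2$: $\operatorname{TR}_1^1(\mathbb{Z};2) = 0$ forces $V(\ell(\eta)) = 0$, and applying $\ell$ to the sphere-spectrum identity $\xi_{1,1} = dV([1]_1) + V(\eta)$ from Theorem~\ref{TRsphere} yields $\xi_{1,1} = dV([1]_1)$ in $\operatorname{TR}_1^2(\mathbb{Z};2)$, whence $d\log[-1]_2 = -dV([1]_1) = \xi_{1,1}$. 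For the inductive step $n \geqslant 3$, assume $\ell(\eta \cdot [1]_{n-1}) = \xi_{1,1}$ at level $n-1$. Applying $V$ to $\xi_{1,1} = dV(1) + V(\eta)$ together with $Vd = 2dV$ and $2V^2(\eta) = 0$ gives $V\xi_{1,1} = 2\xi_{1,2} + V^2(\eta)$ in $\operatorname{TR}_1^n(\mathbb{S};2)$, and since $V^2(\ell(\eta)) = 0$ we obtain $V(\xi_{1,1}) = 2\xi_{1,2}$ at level $n$ in $\operatorname{TR}_1^n(\mathbb{Z};2)$. Applying $\ell$ to the same sphere identity at level $n$ then reads $\xi_{1,1} = dV([1]_{n-1}) + 2\xi_{1,2}$, so $dV([1]_{n-1}) = \xi_{1,1} - 2\xi_{1,2}$. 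Substituting yields
$$d\log[-1]_n = 2\xi_{1,2} - (\xi_{1,1} - 2\xi_{1,2}) = 4\xi_{1,2} - \xi_{1,1} = \xi_{1,1},$$
using that $\xi_{1,2}$ has order $4$ and $\xi_{1,1}$ has order $2$ in $\operatorname{TR}_1^n(\mathbb{Z};2)$ by Theorem~\ref{TRintegers}.

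The main technical input that is not derived internally is the classical identification $\ell(\eta) = [-1] \in K_1(\mathbb{Z})$, an external fact from algebraic $K$-theory. Costeanu's proof in~\cite{costeanu} instead proceeds entirely within TR, exploiting the module structure over $\pi_*^s$ and the explicit low-dimensional structure of $\operatorname{TR}_*^n(\mathbb{Z};2)$ established earlier in this paper; either route delivers the claimed formula $\ell(\eta \cdot [1]_n) = \xi_{1,1}$.
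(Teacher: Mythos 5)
Your reduction to $\operatorname{tr}(\{-1\}) = d\log[-1]_n$, the ghost-coordinate computation $[-1]_n = V([1]_{n-1}) - [1]_n$, and the resulting key identity
$$d\log[-1]_n = V(\ell(\eta\cdot[1]_{n-1})) - dV([1]_{n-1})$$
all coincide with the paper's argument. At this point, however, the induction you set up is unnecessary, and its inductive step contains a false claim. The paper finishes directly: applying $\ell$ to the sphere-spectrum identity $\xi_{1,1} = dV([1]_{n-1}) + V(\eta)$ at level $n$ gives $\xi_{1,1} = dV([1]_{n-1}) + V(\ell(\eta\cdot[1]_{n-1}))$ in $\operatorname{TR}_1^n(\mathbb{Z};2)$, because $\xi_{1,1}\in\operatorname{TR}_1^n(\mathbb{Z};2)$ is by definition the image of $\xi_{1,1}\in\operatorname{TR}_1^n(\mathbb{S};2)$. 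Comparing with the key identity yields $d\log[-1]_n = \xi_{1,1} - 2\,dV([1]_{n-1}) = \xi_{1,1}$, since $dV([1]_{n-1})$ is the image under $\ell$ of $dV(1) = \xi_{1,1} - V(\eta)\in\operatorname{TR}_1^n(\mathbb{S};2)$, which has order $2$ by Thm.~\ref{TRsphere}. No induction on $n$ is required.

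The specific gap in your inductive step is the assertion that $V^2(\ell(\eta)) = 0$ in $\operatorname{TR}_1^n(\mathbb{Z};2)$. This holds only when $\ell(\eta)$ lives at level $1$, that is, for $n = 3$. For $n\geqslant 4$ the inductive hypothesis gives $\ell(\eta) = \xi_{1,1}$ in $\operatorname{TR}_1^{n-2}(\mathbb{Z};2)$, and applying $\ell$ to the identity $V^2(\xi_{1,1}) = 4\xi_{1,3} + V^3(\eta)$ in $\operatorname{TR}_1^n(\mathbb{S};2)$ shows $V^2(\xi_{1,1}) = 4\xi_{1,3} + \ell(V^3(\eta))$ in $\operatorname{TR}_1^n(\mathbb{Z};2)$; already for $n = 4$ this equals $4\xi_{1,3}\neq 0$. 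Consequently your derived formula $V(\xi_{1,1}) = 2\xi_{1,2}$ is also wrong for $n\geqslant 4$: the true value is $V(\xi_{1,1}) = 2\xi_{1,2} + \ell(V^2(\eta))$, with a non-vanishing tail. Your final answer comes out correct only because the substitution step in fact uses nothing more than $2V(\xi_{1,1}) = V(2\xi_{1,1}) = 0$, so the error cancels by accident. To make the argument rigorous, either adopt the direct deduction above, or replace the false claim $V(\xi_{1,1}) = 2\xi_{1,2}$ with the observation that $2V(\xi_{1,1}) = 0$ and $2\,dV([1]_{n-1}) = 0$, which is all the final step actually needs.
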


\begin{proof}We temporarily write $[1_{\mathbb{S}}]_n$ and
$[1_{\mathbb{Z}}]_n$ for the multiplicative unit elements of the
graded rings $\operatorname{TR}_*^n(\mathbb{S};2)$ and
$\operatorname{TR}_*^n(\mathbb{Z};2)$,
respectively. By~\cite[Prop.~2.7.1]{hm}, the map $\ell$ is a map of
graded algebras over the graded ring given by the stable homotopy
groups of spheres. Hence, it takes the class $\eta \cdot
[1_{\mathbb{S}}]_n$ to the class $\eta \cdot 
[1_{\mathbb{Z}}]_n$. Similarly, it is proved in~\cite[Cor.~6.4.1]{gh}
that the cyclotomic trace map
$$\operatorname{tr} \colon K_*(\mathbb{Z}) \to 
\operatorname{TR}_*^n(\mathbb{Z};2)$$
is a map of graded algebras over the graded ring given by the stable
homotopy groups of spheres. Hence, the class $\eta \cdot
[1_{\mathbb{Z}}]_n$ is equal to the image by the cyclotomic trace map
of the class $\eta \cdot 1_{\mathbb{Z}} \in K_1(\mathbb{Z})$. The
latter class is known to be equal to the generator $\{-1\} \in
K_1(\mathbb{Z})$. It is proved in~\cite[Lemma~2.3.3]{hm4} that the
image by the cyclotomic trace map of the generator $\{-1\}$ is equal to
the class
$$d \log [-1]_n \in \operatorname{TR}_1^n(\mathbb{Z};2).$$
To evaluate this class, we recall from~\cite[Thm.~F]{hm} that the ring
$\operatorname{TR}_0^n(\mathbb{Z};2)$ is canonically isomorphic to the
ring of Witt vectors $W_n(\mathbb{Z})$. One readily verifies that
$$[-1]_n = -[1]_n + V([1]_{n-1})$$
by evaluating the ghost coordinates. It follows that
$d[-1]_n = dV([1]_{n-1})$, and since the class $[-1]_n$ is a square
root of $1$, we find
$$\begin{aligned}
d\log [-1]_n & = [-1]_n d[-1]_n =
(-[1]_n + V([1]_{n-1})) \cdot dV([1]_{n-1}) \cr
{} & = dV([1]_{n-1}) + V(FdV([1]_{n-1})) =
dV([1]_{n-1}) + V(\eta \cdot [1]_{n-1}). \cr
\end{aligned}$$
But by Thm.~\ref{TRsphere}, this is the class $\xi_{1,1}$ as
stated.
\qed
\end{proof}

\begin{remark}It follows from Lemma~\ref{imageofeta} that
$\ell(V^s(\eta)) = \sum_{t \geqslant s} 2^{t-1}\xi_{1,t}$, if $s
\geqslant 2$.
\end{remark}

At present, we do not know the precise value of the map
$$\ell \colon \operatorname{TR}_q^n(\mathbb{S};2) \to
\operatorname{TR}_q^n(\mathbb{Z};2)$$
for $q \geqslant 3$. However, we have the following result. We define
$\tilde{\eta} \in \operatorname{TR}_1^n(\mathbb{S},I;2)$ to be the
unique class such that $i(\tilde{\eta}) = \eta - \xi_{1,1}$. The class
$\tilde{\nu}$ that appears in the statement will be defined in the
course of the proof. It would be desirable to better understand this
class. In particular, we do not know the values of 
$\eta^2\tilde{\eta}$ or $Fd\tilde{\nu}$.

\begin{theorem}\label{TRrelative}The groups
$\operatorname{TR}_q^n(\mathbb{S},I;2)$ with $q \leqslant 3$ are given
by
$$\begin{aligned}
\operatorname{TR}_0^n(\mathbb{S},I;2) & = 0 \cr
\operatorname{TR}_1^n(\mathbb{S},I;2) & =
\bigoplus_{0 \leqslant s < n}
\mathbb{Z}/2\mathbb{Z} \cdot V^s(\tilde{\eta}) \cr
\operatorname{TR}_2^n(\mathbb{S},I;2) & = 
\bigoplus_{0 \leqslant s < n} \big( 
\mathbb{Z}/2\mathbb{Z} \cdot V^s(\eta\tilde{\eta}) \oplus
\mathbb{Z}/2\mathbb{Z} \cdot dV^s(\tilde{\eta}) \big) \cr
\operatorname{TR}_3^n(\mathbb{S},I;2) & = 
\bigoplus_{0 \leqslant s < n}
\mathbb{Z}/8\mathbb{Z} \cdot V^s(\tilde{\nu}) \oplus
\bigoplus_{1 \leqslant s < n}
\mathbb{Z}/2\mathbb{Z} \cdot dV^s(\eta\tilde{\eta}) \cr
\end{aligned}$$
and the group $\operatorname{TR}_4^n(\mathbb{S},I;2)$ is generated by
$dV^s(\tilde{\nu})$ with $0 \leqslant s < n$. Moreover, the
restriction map takes $\tilde{\eta}$ to $\tilde{\eta}$ and
$\tilde{\nu}$ to $\tilde{\nu}$, and the Frobenius map takes both
$\tilde{\eta}$ and $\tilde{\nu}$ to zero and takes $d\tilde{\eta}$ to 
$d\tilde{\eta}$. The class $d(\eta\tilde{\eta}) = \eta
d(\tilde{\eta})$ is zero.
\end{theorem}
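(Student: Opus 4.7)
The plan is to induct on $n$, applying the fundamental long-exact sequence of Prop.~\ref{fundamentalcofibrationsequence} with the parameters $m = n-1$ and the internal $n$ taken to be $1$. By Cor.~\ref{restrictionsurjective} the restriction map $R$ is surjective for $q \leq 4$, so the long-exact sequence breaks into short-exact sequences
$$0 \to \mathbb{H}_q(C_{2^{n-1}}, T(\mathbb{S},I)) \xrightarrow{N} \operatorname{TR}_q^n(\mathbb{S},I;2) \xrightarrow{R} \operatorname{TR}_q^{n-1}(\mathbb{S},I;2) \to 0$$
for $q \leq 3$, whose kernels were evaluated in Prop.~\ref{orbitsrelative}. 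The base case $n = 1$ is already in hand from the discussion preceding the theorem via the long-exact sequence comparing $\operatorname{TR}^1(\mathbb{S};2)$ and $\operatorname{TR}^1(\mathbb{Z};2)$.

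For $q = 0, 1, 2$ the decomposition follows rapidly: all groups involved are $\mathbb{F}_2$-vector spaces, so every extension splits, and one need only check that the candidate generators $V^s(\tilde\eta)$, $dV^s(\tilde\eta)$, and $V^s(\eta\tilde\eta)$ are nonzero and mutually independent. Nonzeroness of $V^s(\tilde\eta)$ follows from injectivity of $i$ in degree $1$ (since $\operatorname{TR}_2^n(\mathbb{Z};2) = 0$) together with $i(V^s(\tilde\eta)) = V^s(\eta) - V^s(\xi_{1,1})$, which is nonzero by Thm.~\ref{TRsphere}; independence of the degree-$2$ classes can be tracked via the $E^{\infty}$-page of the skeleton spectral sequence computed before Prop.~\ref{orbitsrelative}, where they live in distinct filtration levels.

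The main obstacle lies in degree $3$, where $\mathbb{Z}/8$-summands appear and the class $\tilde\nu \in \operatorname{TR}_3^n(\mathbb{S},I;2)$ must be defined coherently for all $n$. For $n \geq 2$ the element $\nu \in \operatorname{TR}_3^n(\mathbb{S};2)$ is no longer in the image of $i$, because $\ell(\nu) = 2\lambda$ is nonzero in $\operatorname{TR}_3^n(\mathbb{Z};2)$ by Thm.~\ref{TRintegers}. One defines inductively a compatible sequence $\{\tilde\nu_n\}$: surjectivity of $R$ supplies some lift, and we pin down the choice by requiring the lift to have order exactly $8$ and to satisfy $F(\tilde\nu_n) = 0$. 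Existence of such a choice is an extension argument using the short-exact sequence above, the injectivity of $i$ in degree $3$ (since $\operatorname{TR}_4^n(\mathbb{Z};2) = 0$), and the orders recorded in Thm.~\ref{TRsphere} and Cor.~\ref{cokernellambda}. Degree $4$ is then handled via the long-exact sequence with $\operatorname{TR}_4^n(\mathbb{Z};2) = 0$: the map $i$ is surjective in degree $4$, and combined with the boundary image from $\operatorname{TR}_5^n(\mathbb{Z};2)$ (described in Thm.~\ref{TRintegers}), one identifies generators of $\operatorname{TR}_4^n(\mathbb{S},I;2)$ of the form $dV^s(\tilde\nu)$.

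Finally, the structural maps are verified by naturality. Restriction compatibility is built into the construction of $\tilde\eta$ and $\tilde\nu$. For $F(\tilde\eta) = 0$, apply $i$: one computes $i(F(\tilde\eta)) = F(\eta) - F(\xi_{1,1}) = \eta - \eta = 0$, and $i$ is injective in degree $1$. The identity $F(d\tilde\eta) = d\tilde\eta$ uses the identification $d\tilde\eta = \bar\lambda = \partial(\lambda)$ established in the discussion preceding Prop.~\ref{orbitsrelative}, together with naturality $F\partial = \partial F$ and $F(\lambda) = \lambda$, since $\lambda$ comes from $K_3(\mathbb{Z})$ via the cyclotomic trace on whose image $F$ acts as the identity. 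The vanishing $d(\eta\tilde\eta) = \eta\,d\tilde\eta = \eta\bar\lambda = \partial(\eta\lambda)$ follows since $\eta\lambda \in \operatorname{TR}_4^n(\mathbb{Z};2) = 0$.
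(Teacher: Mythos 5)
Your overall strategy---breaking the long-exact sequence of Prop.~\ref{fundamentalcofibrationsequence} into short-exact sequences via Cor.~\ref{restrictionsurjective} and Prop.~\ref{orbitsrelative}, using injectivity of $i$ in degrees $1$ and $3$ (from $\operatorname{TR}_2^n(\mathbb{Z};2) = \operatorname{TR}_4^n(\mathbb{Z};2) = 0$), and inductively constructing $\tilde{\eta}$ and $\tilde{\nu}$ as lifts---matches the paper's approach, and your handling of the $\mathbb{F}_2$-vector space cases $q \leqslant 2$ and of the structural maps (including the derivation argument $d(\eta\tilde{\eta}) = \eta\, d\tilde{\eta} = \partial(\eta\lambda) = 0$) is sound. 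Two places, however, are glossed over to the point of leaving real gaps.

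First, the degree-$4$ claim. From the long-exact sequence with $\operatorname{TR}_4^n(\mathbb{Z};2) = 0$, you correctly get that $i$ is surjective in degree $4$ with kernel the image of $\partial \colon \operatorname{TR}_5^n(\mathbb{Z};2) \to \operatorname{TR}_4^n(\mathbb{S},I;2)$; combined with Prop.~\ref{orbitsrelative} and the short-exact sequence, this exhibits generators $dV^s(\tilde{\nu})$ (for $1 \leqslant s < n$) together with $\bar{\kappa} = \partial(\kappa)$. But nothing you say explains why $\bar{\kappa}$ lies in the span of the $dV^s(\tilde{\nu})$. This is precisely the point the paper must work for: it requires showing that $\bar{\kappa}$ is in the image of Connes' operator, i.e.\ $\bar{\kappa} = dx_n$ for some $x_n \in \operatorname{TR}_3^n(\mathbb{S},I;2)$. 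The base case $n = 2$ of this is Lemma~\ref{connesoperatorsurjective}, a genuinely nontrivial spectral-sequence argument, and only afterwards does an inductive bootstrap handle $n \geqslant 3$. As written, your degree-$4$ paragraph asserts the conclusion without addressing this obstacle at all.

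Second, the inductive construction of $\tilde{\nu}_n$ for $q = 3$ is considerably more delicate than ``pin down the choice by requiring the lift to have order $8$ and to satisfy $F(\tilde{\nu}_n) = 0$.'' Requiring $F(\tilde{\nu}_n) = 0$ is a genuine constraint one must verify can be met; the paper does so by writing $i(\tilde{\nu}_n)$ out explicitly, adjusting the naive lift $S(i(\tilde{\nu}_{n-1}))$ by terms like $a_{n-1}\xi_{3,n-1}$ and, at stage $n = 3$, by correction terms whose form depends on whether $4\xi_{3,1}$ equals $dV(\eta^2)$ or $\eta^2\xi_{1,1}$ in $\operatorname{TR}_3^2(\mathbb{S};2)$. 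An abstract ``extension argument'' does not by itself produce a lift killed by $F$; you need to exhibit enough of the image of $\ell \colon \operatorname{TR}_3^n(\mathbb{S};2) \to \operatorname{TR}_3^n(\mathbb{Z};2)$ (which is where Cor.~\ref{cokernellambda} and the unit $\tilde{a}_1$ enter) to see that the obstruction to choosing such a lift vanishes. Flagging this as where the real work lies---and in particular that the existence of the $\mathbb{Z}/8\mathbb{Z}$-splitting is not automatic the way the $\mathbb{F}_2$-splittings are---would make the sketch honest.
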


\begin{proof}The statement for $q = 0$ follows immediately from
Thm.~\ref{TRsphere} and Prop.~\ref{TRintegers}. In the case $q = 1$,
Lemma~\ref{evenzero} shows that the map
$i \colon \operatorname{TR}_1^n(\mathbb{S},I;2) \to
\operatorname{TR}_1^n(\mathbb{S};2)$ is injective, and
Lemma~\ref{imageofeta} shows that the class $\eta - \xi_{1,1}$ is in
the image. As said above, we define $\tilde{\eta} \in
\operatorname{TR}_1^n(\mathbb{S},I;2)$ to be the unique class with
$i(\tilde{\eta}) = \eta - \xi_{1,1}$. The statement for $q = 1$ now
follows immediately from Thm..~\ref{TRsphere} and
Prop.~\ref{TRintegers}. For $q = 2$, a similar argument shows that the
group $\operatorname{TR}_2^n(\mathbb{S},I;2)$ contains the subgroup  
$$\operatorname{TR}_2^n(\mathbb{S},I;2)' = 
\bigoplus_{0 \leqslant s < n}
\mathbb{Z}/2\mathbb{Z} \cdot V^s(\eta\tilde{\eta}) \oplus
\bigoplus_{1 \leqslant s < n}
\mathbb{Z}/2\mathbb{Z} \cdot dV^s(\tilde{\eta})$$
which maps isomorphically onto the image of $i \colon
\operatorname{TR}_2^n(\mathbb{S},I;2) \to
\operatorname{TR}_2^n(\mathbb{S};2)$, and Lemma~\ref{cokernellambda}
shows that the kernel of the latter map is $\mathbb{Z}/2\mathbb{Z}
\cdot \bar{\lambda}$. Therefore, to prove the statement for $q = 2$,
it remains to prove that $d\tilde{\eta} = \bar{\lambda}$. We have
already proved this equality, for $n = 1$, in the discussion
preceeding Prop.~\ref{orbitsrelative}. It follows that the iterated
restriction map $R^{n-1} \colon \operatorname{TR}_2^n(\mathbb{S},I;2)
\to \operatorname{TR}_2^1(\mathbb{S},I;2)$ takes the class
$d\tilde{\eta}$ to the class $\bar{\lambda}$. Since the kernel of this
map is equal to the subgroup $\operatorname{TR}_2^n(\mathbb{S},I;2)'$,
it suffices to show that the class $i(d\tilde{\eta} - \bar{\lambda})
\in \operatorname{TR}_2^n(\mathbb{S};2)$ is zero. We have
$$i(d\tilde{\eta} - \bar{\lambda}) = i(d\tilde{\eta}) =
d(i(\tilde{\eta})) = d\eta - d\xi_{1,1}.$$
The class $d\eta$ is zero, since $\eta$ is in the image of the
cyclotomic trace map, and we proved in Thm.~\ref{TRsphere} that
$d\xi_{1,1}$ is zero. Th statement for $q = 2$ follows. It also
follows that $F(d\tilde{\eta}) = d\tilde{\eta}$, since $d\tilde{\eta}
= \partial(\lambda)$ and $\lambda$ is in the image of the cyclotomic
trace map.

We next prove the statement for $q = 3$. By
Lemma~\ref{degreethreeexact}, the sequences
$$0 \to \mathbb{H}_3(C_{2^{n-1}}, T(\mathbb{S},I)) \xrightarrow{N}
\operatorname{TR}_3^n(\mathbb{S},I;2) \xrightarrow{R}
\operatorname{TR}_3^{n-1}(\mathbb{S},I;2) \to 0$$
are exact. The left-hand group was evaluated in
Prop.~\ref{orbitsrelative} above. To complete the proof, we
inductively construct classes
$$\tilde{\nu} = \tilde{\nu}_n \in
\operatorname{TR}_3^n(\mathbb{S},I;2) \hskip5mm (n \geqslant 1)$$
such that $R(\tilde{\nu}_n) = \tilde{\nu}_{n-1}$ and $F(\tilde{\nu}_n)
= 0$, and such that $\tilde{\nu}_1$ is the class $\tilde{\nu}$ already
defined. By Prop.~\ref{evenzero} and Cor.~\ref{cokernellambda}, we
have a short-exact sequence
$$0 \to \operatorname{TR}_3^n(\mathbb{S},I;2) \xrightarrow{i}
\operatorname{TR}_3^n(\mathbb{S};2) \xrightarrow{\ell}
\operatorname{TR}_3^n(\mathbb{Z};2)' \to 0,$$
where the right-hand group is the index two subgroup of
$\operatorname{TR}_3^n(\mathbb{Z};2)$ defined by
$$\operatorname{TR}_3^n(\mathbb{Z};2)' = \bigoplus_{1 \leqslant s < n} 
\mathbb{Z}/2^{s+1}\mathbb{Z} \cdot \xi_{3,s}.$$
To define the class $\tilde{\nu}_2$, we first note that $\ell(\nu) =
a_1 \xi_{3,1}$, where $a_1 \in (\mathbb{Z}/4\mathbb{Z})^*$ is a unit,
and choose a unit $\tilde{a}_1 \in (\mathbb{Z}/8\mathbb{Z})^*$ whose
reduction modulo $4$ is $a_1$. Then, we have $\ell(\nu -
\tilde{a}_1\xi_{3,1}) = 0$ and $F(\nu - \tilde{a}_1\xi_{3,1}) =
(1-\tilde{a}_1)\nu$. We choose $b_1 \in \mathbb{Z}/8\mathbb{Z}$ such
that $2b_1 = \tilde{a}_1 - 1$ and define $\tilde{\nu}_2$ to be the
unique class such that
$$i(\tilde{\nu}_2) = \nu - \tilde{a}_1\xi_{3,1} + b_1 V(\nu).$$
Then $R(\tilde{\nu}_2) = \tilde{\nu}_1$ and $F(\tilde{\nu}_2) = 0$ as
desired.

We next define the class $\tilde{\nu}_3$. The image of $\tilde{\nu}_2$
by the composition
$$\operatorname{TR}_3^2(\mathbb{S},I;2) \xrightarrow{i}
\operatorname{TR}_3^2(\mathbb{S};2) \xrightarrow{S}
\operatorname{TR}_3^3(\mathbb{S};2) \xrightarrow{\ell}
\operatorname{TR}_3^3(\mathbb{Z};2)'$$
is equal to $a_2\xi_{3,2}$, for some $a_2 \in
\mathbb{Z}/8\mathbb{Z}$. We claim that, in fact, $a_2 \in
4\mathbb{Z}/8\mathbb{Z}$. Indeed, since $F(\tilde{\nu}_2) = 0$, we
have $F(a_2\xi_{3,2}) = 0$. But $F(\xi_{3,2}) = \xi_{3,1}$ which shows
that the modulo $4$ reduction of $a_2$ is zero as claimed. We let $b_2
\in \mathbb{Z}/2\mathbb{Z}$ be the unique element such that $4b_2 =
a_2$ and define $\tilde{\nu}_3$ to be the unique class such that
$$i(\tilde{\nu}_3) = \begin{cases}
S(i(\tilde{\nu}_2)) + b_2(4\xi_{3,2} +
dV^2(\eta^2) + 2V^2(\nu)) & \text{if $4\xi_{3,1} = dV(\eta^2)$} \cr
S(i(\tilde{\nu}_2)) + b_2(4\xi_{3,2} +
dV^2(\eta^2)) & \text{if $4\xi_{3,1} = \eta^2\xi_{1,1}$.} \cr
\end{cases}$$
The sum on the right-hand side is in the kernel of
$\ell$, since both $\ell(\eta^2) \in
\operatorname{TR}_2^1(\mathbb{Z};2)$ and $\ell(\nu) \in
\operatorname{TR}_3^1(\mathbb{Z};2)$ are zero. We also have
$R(\tilde{\nu}_3) = \tilde{\nu}_2$ and $F(\tilde{\nu}_3) = 0$ as
desired. Indeed, if $4\xi_{3,1} = dV(\eta^2)$, then
$$\begin{aligned}
i(F(\tilde{\nu}_3)) & = F(S(i(\tilde{\nu}_2)) + b_2(4\xi_{3,2} +
dV^2(\eta^2) + 2V^2(\nu))) \cr
{} & = S(i(F(\tilde{\nu}_2))) + b_2(4\xi_{3,1} +
dV(\eta^2) + V(\eta^3) + 4V(\nu)), \cr
\end{aligned}$$
and if $4\xi_{3,1} = \eta^2\xi_{3,1}$, then
$$\begin{aligned}
i(F(\tilde{\nu}_3)) & = F(S(i(\tilde{\nu}_2)) + b_2(4\xi_{3,2} +
dV^2(\eta^2))) \cr
{} & = S(i(F(\tilde{\nu}_2))) + b_2(4\xi_{3,1} +
dV(\eta^2) + V(\eta^3)), \cr
\end{aligned}$$
and in either case, the sum is zero.

Finally, we let $n \geqslant 4$ and assume that the class
$\tilde{\nu}_{n-1}$ has been defined. We find as before that the image
of the class $\tilde{\nu}_{n-1}$ by the composition
$$\operatorname{TR}_3^{n-1}(\mathbb{S},I;2) \xrightarrow{i}
\operatorname{TR}_3^{n-1}(\mathbb{S};2) \xrightarrow{S}
\operatorname{TR}_3^n(\mathbb{S};2) \xrightarrow{\ell}
\operatorname{TR}_3^n(\mathbb{Z};2)'$$
is equal to $a_{n-1}\xi_{3,n-1}$ with $a_{n-1} \in
2^{n-1}\mathbb{Z}/2^n\mathbb{Z}$ and define $\tilde{\nu}_n$ to be the
unique class whose image by the map $i$ is equal to
$$i(\tilde{\nu}_n) = S(i(\tilde{\nu}_{n-1})) - a_{n-1}\xi_{3,n-1}.$$
Then $R(\tilde{\nu}_n) = \tilde{\nu}_{n-1}$ and $F(\tilde{\nu}_n) =
0$, since $2^{n-1}\xi_{3,n-2} = 0$, for $n \geqslant 4$.

It remains to prove that the group
$\operatorname{TR}_4^n(\mathbb{S},I;2)$ is generated by the homotopy
classes $dV^s(\tilde{\nu})$ with $0 \leqslant s < n$. The sequence
$$\mathbb{H}_4(C_{2^{n-1}},T(\mathbb{S},I)) \to
\operatorname{TR}_4^n(\mathbb{S},I;2) \to
\operatorname{TR}_4^{n-1}(\mathbb{S},I;2) \to 0,$$
which is exact by Cor.~\ref{restrictionsurjective}, together with
Prop.~\ref{orbitsrelative} show that
$\operatorname{TR}_4^n(\mathbb{S},I;2)$ is generated by the classes
$dV(\tilde{\nu})$, $1 \leqslant s < n$, and $\bar{\kappa}$. Indeed,
since the boundary map
$$\partial \colon \operatorname{TR}_5^n(\mathbb{Z};2) \to
\operatorname{TR}_4^n(\mathbb{S},I;2)$$
commutes with the Verschiebung, it follows that $V^{n-1}(\bar{\kappa})
= c\bar{\kappa}$, for some $c \in 2^{n-1}\mathbb{Z}$. Hence, it
suffices to show that there exists a class $x_n \in
\operatorname{TR}_3^n(\mathbb{S},I;2)$ with $dx_n = \bar{\kappa}$. The
statement for $n = 1$ is trivial, since the group
$\operatorname{TR}_4^1(\mathbb{S},I;2)$ is zero. We postpone the proof
of the statement for $n = 2$ to Lemma~\ref{connesoperatorsurjective}
below and here prove the induction step. So we let $n \geqslant 3$ and
assume that there exists a class $x_{n-1} \in
\operatorname{TR}_3^{n-1}(\mathbb{S},I;2)$ with $dx_{n-1} =
\bar{\kappa}$. We use Cor.~\ref{restrictionsurjective} to choose a
class $x_n' \in \operatorname{TR}_3^n(\mathbb{S},I;2)$ with 
$R(x_n') = x_{n-1}$. Then the exact sequence above and
Prop.~\ref{orbitsrelative} show that
$$dx_n' = \bar{\kappa} + adV^{n-1}(\tilde{\nu}) + bV^{n-1}(\kappa)
= adV^{n-1}(\tilde{\nu}) + (1+bc)\bar{\kappa},$$
for some integers $a$ and $b$. Since $1+bc$ is a $2$-adic unit, the class
$$x_n = (1+bc)^{-1}(x_n' - aV^{n-1}(\tilde{\nu}))$$
is well-defined and satisfies $dx_n = \bar{\kappa}$ as desired.
\qed
\end{proof}

One wonders whether the class $\tilde{\nu}$, which was defined in the
proof above, satisfies that $d\tilde{\nu} = \bar{\kappa}$. This would
imply that $Fd\tilde{\nu} = d\tilde{\nu}$, since $\kappa$ is in the
image of the cyclotomic trace map.

The following result was used in the proof of Thm.~\ref{TRrelative}
above.

\begin{lemma}\label{connesoperatorsurjective}Connes' operator
$$d \colon \operatorname{TR}_3^2(\mathbb{S},I;2) \to
\operatorname{TR}_4^2(\mathbb{S},I;2)$$
is surjective.
\end{lemma}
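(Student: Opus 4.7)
By Proposition~\ref{orbitsrelative} (case $n=2$) together with Corollary~\ref{restrictionsurjective} and the vanishing $\operatorname{TR}_4^1(\mathbb{S},I;2) = \operatorname{TR}_5^1(\mathbb{S},I;2) = 0$, the norm map supplies an isomorphism
\[
\mathbb{H}_4(C_2, T(\mathbb{S},I)) \;=\; \mathbb{Z}/2\mathbb{Z}\cdot dV(\tilde{\nu}) \,\oplus\, \mathbb{Z}/2\mathbb{Z}\cdot\bar{\kappa} \;\xrightarrow{\sim}\; \operatorname{TR}_4^2(\mathbb{S},I;2).
\]
Since $V(\tilde{\nu}) \in \operatorname{TR}_3^2(\mathbb{S},I;2)$, the generator $dV(\tilde{\nu})$ is automatically in the image of Connes' operator, so the task reduces to exhibiting $\bar{\kappa}$ in the image.

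The natural candidate is $\tilde{\nu}_2 \in \operatorname{TR}_3^2(\mathbb{S},I;2)$, constructed in the proof of Theorem~\ref{TRrelative} with $R(\tilde{\nu}_2) = \tilde{\nu}_1$, $F(\tilde{\nu}_2) = 0$, and $i(\tilde{\nu}_2) = \nu - \tilde{a}_1\xi_{3,1} + b_1 V(\nu)$ for an odd unit $\tilde{a}_1 \in (\mathbb{Z}/8\mathbb{Z})^{\times}$ and $b_1$ satisfying $2b_1 = \tilde{a}_1 - 1$. Writing $d\tilde{\nu}_2 = \alpha\, dV(\tilde{\nu}) + \beta\, \bar{\kappa}$ with $\alpha,\beta \in \mathbb{Z}/2\mathbb{Z}$, the lemma is equivalent to $\beta = 1$. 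To compute $\beta$, I would use the relation $FdV = d + \eta$ from Section~\ref{TRintro} applied to $\tilde{\nu}_2$ at level~$3$, giving
\[
F\bigl(dV(\tilde{\nu}_2)\bigr) \;=\; d\tilde{\nu}_2 + \eta\tilde{\nu}_2 \quad\text{in}\quad \operatorname{TR}_4^2(\mathbb{S},I;2).
\]
The image $i(\eta\tilde{\nu}_2) = -\tilde{a}_1\cdot\eta\xi_{3,1}$ lies in $\operatorname{TR}_4^2(\mathbb{S};2) = \mathbb{Z}/2\mathbb{Z}\cdot\nu\xi_{1,1}$, and the projection formula combined with $\eta\tilde{\nu} = 0$ in $\operatorname{TR}_4^1(\mathbb{S},I;2) = 0$ controls the lift of $\eta\tilde{\nu}_2$ modulo $\ker(i) = \mathbb{Z}/2\mathbb{Z}\cdot\bar{\kappa}$. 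The class $dV(\tilde{\nu}_2)$ in turn sits in $\operatorname{TR}_4^3(\mathbb{S},I;2)$, whose structure is given by Proposition~\ref{orbitsrelative} for $n = 3$; tracing its Frobenius image, together with compatibility of $\partial\colon \operatorname{TR}_5^n(\mathbb{Z};2) \to \operatorname{TR}_4^n(\mathbb{S},I;2)$ with the Verschiebung, isolates the $\bar{\kappa}$-coefficient of $F(dV\tilde{\nu}_2)$, and hence of $d\tilde{\nu}_2$.

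The main obstacle is ruling out $\beta = 0$: the summand $\mathbb{Z}/2\mathbb{Z}\cdot\bar{\kappa}$ is invisible under $i$ and is detected only through the boundary $\partial$. The strategy mirrors the argument used in the discussion preceding Proposition~\ref{orbitsrelative} for the identity $d\tilde{\eta} = \bar{\lambda}$, which rested on the vanishing $V(\bar{\lambda}) = 0$ deduced from the two possibilities $V(\lambda) \in \{0, 4\lambda\} \subset \operatorname{TR}_3^2(\mathbb{Z};2)$ and the fact that $4\bar{\lambda} = 0$. Here the analogous input is a computation at level $n = 3$ that exploits the fixed position of $V(\tilde{\nu}_2)$ in the skeleton spectral sequence for $\mathbb{H}_*(C_4, T(\mathbb{S},I))$ forced by $FV = 2$ and $R(\tilde{\nu}_2) = \tilde{\nu}_1$, combined with the description of $\operatorname{TR}_*^3(\mathbb{S},I;2)$ from Theorem~\ref{TRrelative}. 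This should force the $\bar{\kappa}$-coefficient of $F(dV\tilde{\nu}_2) - \eta\tilde{\nu}_2$ to be nonzero, yielding $\beta = 1$ and the required surjectivity.
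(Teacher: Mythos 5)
Your proposal identifies the right target decomposition $\operatorname{TR}_4^2(\mathbb{S},I;2) = \mathbb{Z}/2\mathbb{Z}\cdot dV(\tilde\nu) \oplus \mathbb{Z}/2\mathbb{Z}\cdot\bar\kappa$ and correctly observes that only the $\bar\kappa$-summand is at issue. But the argument does not close. You reduce the lemma to showing $\beta = 1$ in $d\tilde\nu_2 = \alpha\, dV(\tilde\nu) + \beta\,\bar\kappa$, and then propose to extract $\beta$ from the relation $FdV = d + \eta$ together with compatibility of $\partial$ with the Verschiebung. This is announced as a strategy rather than carried out: the two central steps (``controls the lift of $\eta\tilde\nu_2$ modulo $\ker(i)$'' and ``isolates the $\bar\kappa$-coefficient of $F(dV\tilde\nu_2)$'') are not actually computed, and nothing in the proposal rules out $\beta = 0$. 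In fact, what you are trying to compute is precisely the quantity the paper declares unknown: the discussion preceding Theorem~\ref{TRrelative} states explicitly that the value of $Fd\tilde\nu$ is not determined, and the remark after the proof of Theorem~\ref{TRrelative} singles out the identity $d\tilde\nu = \bar\kappa$ as an open question. So a direct determination of $d\tilde\nu_2$ by the paper's methods is not available, and a complete argument along your lines would require an input the paper does not supply.

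There is also a smaller imprecision in the reduction itself: the lemma is that $\bar\kappa$ lies in the image of $d$ on all of $\operatorname{TR}_3^2(\mathbb{S},I;2)$, which has a third generator $dV(\eta\tilde\eta)$; its image $d(dV(\eta\tilde\eta)) = \eta\, dV(\eta\tilde\eta)$ is not a~priori confined to the $dV(\tilde\nu)$-summand, so ``surjectivity iff $\beta = 1$'' needs a further check. The paper avoids the direct computation entirely. It translates the lemma into the statement that, in the skeleton spectral sequence converging to $\mathbb{H}_{*}(C_2,\mathit{TR}^{\,2}(\mathbb{S},I))$, the differential $d^2 \colon E_{3,3}^2 \to E_{1,4}^2$ is surjective, shows this is equivalent to $\mathbb{H}_5(C_2,\mathit{TR}^{\,2}(\mathbb{S},I))$ having order exactly $4$, and then bounds that order from above via a diagram chase using the cofibration $T(\mathbb{S},I) \to T(\mathbb{S}) \to T(\mathbb{Z})$ together with Proposition~\ref{fundamentalcofibrationsequence}. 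The decisive estimate on the boundary map from $\mathbb{H}_6(C_2,\mathit{TR}^{\,2}(\mathbb{Z};2))$ rests on Rognes' computations for $T(\mathbb{Z})$ and on the Segal-tom Dieck splitting. That indirect order count is what actually makes the lemma provable with the tools at hand, and it has no analogue in your proposal.
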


\begin{proof}The groups $\operatorname{TR}_q^2(\mathbb{S},I;2)$ for $q
\leqslant 5$ are given by
$$\begin{aligned}
\operatorname{TR}_0^2(\mathbb{S},I;2) & = 0 \cr
\operatorname{TR}_1^2(\mathbb{S},I;2) & = \mathbb{Z}/2\mathbb{Z} \cdot
\tilde{\eta} \oplus \mathbb{Z}/2\mathbb{Z} \cdot V(\tilde{\eta}) \cr
\operatorname{TR}_2^2(\mathbb{S},I;2) & =  \mathbb{Z}/2\mathbb{Z} \cdot
d\tilde{\eta} \oplus \mathbb{Z}/2\mathbb{Z} \cdot dV(\tilde{\eta})
\oplus \mathbb{Z}/2\mathbb{Z} \cdot \eta\tilde{\eta} \oplus
\mathbb{Z}/2\mathbb{Z} \cdot V(\eta\tilde{\eta}) \cr
\operatorname{TR}_3^2(\mathbb{S},I;2) & = \mathbb{Z}/2\mathbb{Z} \cdot
dV(\eta\tilde{\eta}) \oplus \mathbb{Z}/8\mathbb{Z} \cdot \tilde{\nu}
\oplus \mathbb{Z}/8\mathbb{Z} \cdot V(\tilde{\nu}) \cr
\operatorname{TR}_4^2(\mathbb{S},I;2) & = \mathbb{Z}/2\mathbb{Z} \cdot
\bar{\kappa} \oplus \mathbb{Z}/2\mathbb{Z} \cdot dV(\tilde{\nu}) \cr
\operatorname{TR}_5^2(\mathbb{S},I;2) & = 0 \cr
\end{aligned}$$
Hence, the lemma is equivalent to the statement that in the spectral
sequence
$$E_{s,t}^2 = H_s(C_2,\operatorname{TR}_t^2(\mathbb{S},I;2)) \Rightarrow
\mathbb{H}_{s+t}(C_2,\mathit{TR}^{\,2}(\mathbb{S},I)),$$ 
the $d^2$-differential $d^2 \colon E_{3,3}^2 \to E_{1,4}^2$ is
surjective. We first argue that this is equivalent to the statement
that $\smash{ \mathbb{H}_5(C_2,\mathit{TR}^{\,2}(\mathbb{S},I)) }$
has order $4$. The elements $\bar{\kappa}z_0$ and $dV(\tilde{\nu})z_0$
in $E_{0,4}^2$ are infinite cycles and represent the homotopy classes
$V^2(\bar{\kappa})$ and $\smash{ 2dV^2(\tilde{\nu}) }$ of
$\smash{ \mathbb{H}_4(C_2,\mathit{TR}^{\,2}(\mathbb{S},I)) }$. We
claim that these classes are non-zero and generate a subgroup of order
$4$. To see this, we consider the norm maps from
Prop.~\ref{fundamentalcofibrationsequence},
$$\mathbb{H}_4(C_2,\mathit{TR}^{\,2}(\mathbb{S},I)) \xrightarrow{N_2}
\operatorname{TR}_4^3(\mathbb{S},I;2) \xleftarrow{N_1}
\mathbb{H}_4(C_4, T(\mathbb{S},I)).$$
It will suffice to show that the subgroup of the middle group
generated by the images of the classes $V^2(\bar{\kappa})$ and
$dV^2(\tilde{\nu})$ has order $4$. This subgroup is equal to the
subgroup generated by the images of the classes $V^2(\bar{\kappa})$
and $dV^2(\tilde{\nu})$ of the right-hand group. The right-hand map is
injective, since $\operatorname{TR}_5^2(\mathbb{S},I;2)$ is zero. (The
left-hand map is also injective, since
$\operatorname{TR}_5^1(\mathbb{S},I;2)$ is zero.) Hence, it suffices
to show that the subgroup of the right-hand group generated by the
classes $V^2(\bar{\kappa})$ and $dV^2(\tilde{\nu})$ has order
$4$. But this is proved in Prop.~\ref{orbitsrelative}. The claim
follows. We conclude that in the spectral sequence under
consideration, the differentials
$$d^r \colon E_{r,5-r}^r \to E^r_{0,4}$$
are zero, for all $r \geqslant 2$. It follows that the groups
$E_{0,5}^{\infty}$, $E_{2,3}^{\infty}$, $E_{3,2}^{\infty}$,
$E_{4,1}^{\infty}$, and $E_{5,0}^{\infty}$ have orders $0$, $2$, $2$,
$0$, and $0$, respectively, and that for all $r \geqslant 3$, the
differentials
$$d^r \colon E_{r+1,4-r}^r \to E_{1,4}^r$$
are zero. We conclude that the differential $d^2 \colon E_{3,3}^2 \to
E_{1,4}^2$ is surjective if and only if the group
$\smash{ \mathbb{H}_5(C_2,\mathit{TR}^{\,2}(\mathbb{S},I)) }$ has
order $4$.

The order of the group
$\mathbb{H}_5(C_2,\mathit{TR}^{\,2}(\mathbb{S},I)$ is divisible by
$4$ and to show that it is equal to $4$, we consider the following
diagram with exact rows and columns:
$$\xymatrix{
{ \operatorname{TR}_7^1(\mathbb{S};2) } \ar[r]^(.42){0} \ar[d] &
{ \mathbb{H}_6(C_2,\mathit{TR}^{\,2}(\mathbb{S};2)) } \ar[r]
\ar[d]^{\ell} &
{ \operatorname{TR}_6^3(\mathbb{S};2) } \ar[d] \cr
{ \operatorname{TR}_7^1(\mathbb{Z};2) } \ar[r]^(.42){\delta''} \ar[d] &
{ \mathbb{H}_6(C_2,\mathit{TR}^{\,2}(\mathbb{Z};2)) } \ar[r]
\ar[d]^{\partial} &
{ \operatorname{TR}_6^3(\mathbb{Z};2) } \ar[d] \cr
{ \operatorname{TR}_6^1(\mathbb{S},I;2) } \ar[r]^(.42){\delta'} \ar[d] &
{ \mathbb{H}_5(C_2,\mathit{TR}^{\,2}(\mathbb{S},I)) } \ar[r]
\ar[d] &
{ \operatorname{TR}_5^3(\mathbb{S},I;2) } \ar[d] \cr
{ \operatorname{TR}_6^1(\mathbb{S};2) } \ar[r]^(.42){0} &
{ \mathbb{H}_5(C_2,\mathit{TR}^{\,2}(\mathbb{S};2)) } \ar[r] &
{ \operatorname{TR}_5^3(\mathbb{S};2) } \cr
}$$
It follows from Thms.~\ref{TRsphere} and~\ref{TRintegers} that the
group $\operatorname{TR}_5^3(\mathbb{S},I;2)$ is equal to
$\mathbb{Z}/2\mathbb{Z} \cdot 2\xi_{5,2}$. Hence, it will suffice to
show that the image of the map $\delta'$ has order at most $2$. Since
the lower left-hand horizontal map in the diagram above is zero, we
conclude that the image of the map $\delta'$ is contained in the image
of the map $\partial$. Therefore, it suffices to show that the image
of the map $\partial$ has order at most $2$. 

The group $\operatorname{TR}_6^3(\mathbb{Z};2)$ is zero by
Prop.~\ref{evenzero} and the group
$\operatorname{TR}_7^1(\mathbb{Z};2)$ is cyclic of order $4$. It
follows that the group
$\smash{ \mathbb{H}_6(C_2,\mathit{TR}^{\,2}(\mathbb{Z};2)) }$ is
cyclic and has order either $0$, $2$, or $4$. If the order is either 
$0$ or $2$, we are done, so assume that the order is $4$. We must show
that $2$ times a generator is contained in the image of the map $\ell$
in the diagram above. To this end, we consider the diagram
$$\xymatrix{
{ \mathbb{H}_6(C_4,\mathit{TR}^{\,2}(\mathbb{S};2)) } \ar[r]^{F}
\ar[d]^{\ell} &
{ \mathbb{H}_6(C_2,\mathit{TR}^{\,2}(\mathbb{S};2) } \ar[d]^{\ell} \cr
{ \mathbb{H}_6(C_4,\mathit{TR}^{\,2}(\mathbb{Z};2)) } \ar[r]^{F} &
{ \mathbb{H}_6(C_2,\mathit{TR}^{\,2}(\mathbb{Z};2) } \cr
}$$
We first show that the lower horizontal map $F$ is surjective. The
assumption that the lower right-hand group has order $4$ implies that
a generator of this group is represented in the spectral sequence
$$E_{s,t}^2 = H_s(C_2,\operatorname{TR}_t^2(\mathbb{Z};2))
\Rightarrow \mathbb{H}_{s+t}(C_2,\mathit{TR}^{\,2}(\mathbb{Z};2))$$
by the element $\smash{ \lambda z_3 \in E_{3,3}^2 }$. This element is
the image by the map of spectral sequences induced by the map $F$ of
the element
$\lambda z_3 \in E_{3,3}^2$ in the spectral sequence
$$E_{s,t}^2 = H_s(C_4,\operatorname{TR}_t^2(\mathbb{Z};2))
\Rightarrow \mathbb{H}_{s+t}(C_4,\mathit{TR}^{\,2}(\mathbb{Z};2)).$$
We must show that the latter element $\lambda z_3$ is an infinite
cycle. For degree reasons, the only possible non-zero differential is
$\smash{ d^3 \colon E_{3,3}^3 \to E_{0,5}^3 }$. The target group is
equal to $\mathbb{Z}/2\mathbb{Z} \cdot \kappa z_0$, and the generator
$\kappa z_0$ represents the homotopy class $V^2(\kappa)$ in 
$\mathbb{H}_5(C_4,\mathit{TR}^{\,2}(\mathbb{Z};2))$. To see that
this class is non-zero, we consider the norm maps 
$$\mathbb{H}_4(C_4,\mathit{TR}^{\,2}(\mathbb{Z};2)) \xrightarrow{N_2}
\operatorname{TR}_5^4(\mathbb{Z};2) \xleftarrow{N_1}
\mathbb{H}_4(C_8,T(\mathbb{Z})).$$
We may instead prove that the image of the class $V^2(\kappa)$ by the
left-hand map is non-zero. This image class, in turn, is equal to the
image of the class $V^2(\kappa)$ by the right-hand map which is
injective since $\operatorname{TR}_6^3(\mathbb{Z};2)$ is zero. Now,
Prop.~\ref{orbitsintegers} shows that the class $V^2(\kappa)$ in the
right-hand group is non-zero. We conclude that the lower horizontal
map $F$ in the square diagram above is surjective as stated.

Finally, we show that the image of the left-hand vertical map $\ell$
in the square diagram above contains $2$ times the homotopy class
represented by the element $\lambda z_3$. In fact, the image of the
composition
$$\mathbb{H}_5(C_4, T(\mathbb{S})) \xrightarrow{S}
\mathbb{H}_5(C_4, \mathit{TR}^{\,2}(\mathbb{S};2))
\xrightarrow{\ell} \mathbb{H}_5(C_4,
\mathit{TR}^{\,2}(\mathbb{Z};2))$$
of the Segal-tom Dieck splitting and the map $\ell$ contains $2$
times the class represented by $\lambda z_3$. Indeed, by
Prop.~\ref{orbitssphere}, the element $\nu z_3 \in E_{3,3}^2$ of the
spectral sequence
$$E_{s,t}^2 = H_s(C_4,\operatorname{TR}_t^1(\mathbb{S};2))
\Rightarrow \mathbb{H}_{s+t}(C_4,T(\mathbb{S}))$$
is an infinite cycle whose image by the map of spectral sequence
induced by the composition of the maps $S$ and $\ell$ is equal
$2\lambda z_3 \in E_{3,3}^2 = \mathbb{Z}/4\mathbb{Z} \cdot \lambda
z_3$. This completes the proof.
\qed
\end{proof}

\section{The groups $\operatorname{Wh}_q^{\operatorname{Top}}(S^1)$
 for $q \leqslant 3$}\label{algebrasection}

In this section, we complete the proof of Thm.~\ref{main} of the
Introduction. It follows from~\cite[Thm.~1.2]{grunewaldkleinmacko}
that the odd-primary torsion subgroup of
$\operatorname{Wh}_q^{\operatorname{Top}}(S^1)$ is zero, for $q
\leqslant 3$. Hence, it suffices to consider the homotopy groups with
$\mathbb{Z}_2$-coefficients. We implicitly consider homotopy groups
with $\mathbb{Z}_2$-coefficients.

As we explained in the introduction, there is a long-exact sequence
$$\cdots \to \operatorname{Wh}_q^{\text{Top}}(S^1) \to
\widetilde{\operatorname{TR}}_q(\mathbb{S}[x^{\pm 1}], I[x^{\pm 1}]; 2)
\xrightarrow{1-F} 
\widetilde{\operatorname{TR}}_q(\mathbb{S}[x^{\pm 1}], I[x^{\pm 1}]; 2)
\to \cdots$$
where the middle and on the right-hand terms are the
cokernel of the assembly map
$$\alpha \colon \operatorname{TR}_q(\mathbb{S}; I; 2) \oplus 
\operatorname{TR}_{q-1}(\mathbb{S}; I; 2) \to
\operatorname{TR}_q(\mathbb{S}[x^{\pm 1}], I[x^{\pm 1}]; 2).$$
Moreover, since the groups $\operatorname{TR}_q^n(\mathbb{S},I;2)$ are
finite, for all integers $q$ and $n \geqslant 1$, the limit system
$\smash{ \{ \operatorname{TR}_q^n(\mathbb{S},I;2) \} }$ satisfies the
Mittag-Leffler condition, and Cor.~\ref{kernelR-F} then shows that
the same holds for the limit system $\smash{ \{
\widetilde{\operatorname{TR}}{}_q^n(\mathbb{S}[x^{\pm1}],I[x^{\pm1}];2) 
\} }$. It follows that, for all integers $q$, the canonical map
$$\widetilde{\operatorname{TR}}_q(\mathbb{S}[x^{\pm1}],I[x^{\pm1}];2)
\to \operatornamewithlimits{lim}_n
\widetilde{\operatorname{TR}}{}_q^n(\mathbb{S}[x^{\pm1}],I[x^{\pm1}];2)$$
is an isomorphism. Finally, Thm.~\ref{fundamentaltheorem} expresses the
right-hand side in terms of the groups
$\operatorname{TR}_q^m(\mathbb{S},I;2)$ which we evaluated in
Thm.~\ref{TRrelative} above, for $q \leqslant 3$.

\begin{theorem}\label{whiteheadzeroandone}The groups
$\operatorname{Wh}_0^{\operatorname{Top}}(S^1)$ and
$\operatorname{Wh}_1^{\operatorname{Top}}(S^1)$ are zero.
\end{theorem}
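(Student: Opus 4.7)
The long-exact sequence displayed above splits into short-exact sequences
\[
0 \to \operatorname{coker}(1-F)_{q+1} \to \operatorname{Wh}_q^{\text{Top}}(S^1) \to \ker(1-F)_q \to 0,
\]
where $1-F$ acts on $\widetilde{\operatorname{TR}}_\bullet := \widetilde{\operatorname{TR}}_\bullet(\mathbb{S}[x^{\pm 1}], I[x^{\pm 1}]; 2)$. It therefore suffices to show $\ker(1-F)_q = 0 = \operatorname{coker}(1-F)_{q+1}$ for $q \in \{0,1\}$, using the fundamental-theorem description of $\widetilde{\operatorname{TR}}_\bullet$ recalled just before the theorem statement together with the values of $\operatorname{TR}_q(\mathbb{S}, I; 2)$ from Theorem~\ref{TRrelative}.

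The three easier vanishings follow at once. Since $\operatorname{TR}_0(\mathbb{S},I;2) = 0$ by Theorem~\ref{TRrelative}, Theorem~\ref{fundamentaltheorem} gives $\widetilde{\operatorname{TR}}_0 = 0$, so $\ker(1-F)_0 = 0$. Next, $\operatorname{TR}_1(\mathbb{S},I;2)$ is generated over $\mathbb{Z}/2$ by the $2$-torsion classes $V^s(\tilde\eta)$ with $F(\tilde\eta) = 0$; since $FV = 2$ is then the zero map, $F$ vanishes on $\operatorname{TR}_1(\mathbb{S},I;2)$. Because the $b$-coefficients in the decomposition of $\widetilde{\operatorname{TR}}_1$ would lie in $\operatorname{TR}_0 = 0$, the explicit formula for $F$ following Theorem~\ref{fundamentaltheorem} then shows $F = 0$ on all of $\widetilde{\operatorname{TR}}_1$. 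Hence $1-F = \operatorname{id}$ on $\widetilde{\operatorname{TR}}_1$, yielding $\operatorname{coker}(1-F)_1 = 0 = \ker(1-F)_1$. Combined with $\widetilde{\operatorname{TR}}_0 = 0$ this gives $\operatorname{Wh}_0^{\text{Top}}(S^1) = 0$.

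The main task is $\operatorname{coker}(1-F)_2 = 0$, that is, surjectivity of $1-F$ on $\widetilde{\operatorname{TR}}_2$. By the fundamental theorem $\widetilde{\operatorname{TR}}_2$ decomposes into $F$-stable subspaces indexed by the odd part $j_0$ of $j$: the positions $(s, j_0)$ for $s \geq 1$ together with $(0, 2^k j_0)$ for $k \geq 0$ form a single chain, since $F$ contracts $s \mapsto s-1$ in the Verschiebung block and doubles $j \mapsto 2j$ in the $s=0$ block, the two blocks being coupled through Connes' operator by the transition formula $F\colon (1, j_0) \to (0, j_0),\ (a, b) \mapsto (db + \eta b,\ b)$. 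Exploiting this coupling, I construct a preimage under $1-F$ for each generator: for example the target $d\tilde\eta \cdot [x]^{j_0}$ is the image of the ladder $\omega$ given by $b_{s, j_0} = \tilde\eta$ and $a_{s, j_0} = \eta\tilde\eta$ for every $s \geq 0$ and zeros elsewhere, as one checks by direct calculation using $F(\tilde\eta) = 0$, $F(\eta\tilde\eta) = 0$, $F(d\tilde\eta) = d\tilde\eta$, and $\eta d(\tilde\eta) = 0$ from Theorem~\ref{TRrelative}; analogous ladders with an initial $V^{s'}$-shift handle the remaining generators $V^{s'}(d\tilde\eta) \cdot [x]^{j_0}$ and $V^{s'}(\eta\tilde\eta) \cdot [x]^{j_0}$. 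The main obstacle is confirming that these ladders assemble into genuine elements of the inverse limit $\widetilde{\operatorname{TR}}_2 = \lim_n \widetilde{\operatorname{TR}}_2^n$: each truncation $\omega^{(n)}$ must be a finite sum and must restrict under $R$ to $\omega^{(n-1)}$. This succeeds because $V^s(\tilde\eta)^{(n)}$ exists precisely for $s < n$ and satisfies $R(V^s(\tilde\eta)^{(n)}) = V^s(\tilde\eta)^{(n-1)}$, so the infinite ladder truncates finitely at each level and the restrictions match. Combining surjectivity of $1-F$ on $\widetilde{\operatorname{TR}}_2$ with $\ker(1-F)_1 = 0$ from the previous paragraph then proves $\operatorname{Wh}_1^{\text{Top}}(S^1) = 0$.
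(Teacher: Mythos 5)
Your proof is correct and takes essentially the same route as the paper: reduce to the fundamental-theorem description, note $\widetilde{\operatorname{TR}}_0 = 0$ and that $F = 0$ on $\widetilde{\operatorname{TR}}_1$ (so $1-F$ is the identity there), and then prove surjectivity of $1-F$ on $\widetilde{\operatorname{TR}}_2$ by exhibiting explicit geometric-series / ladder preimages, which assemble into genuine elements of the inverse limit because the level-$n$ truncations are finite. Two small points: the class $V^{s'}(d\tilde\eta)$ vanishes for $s' \geqslant 1$ since $Vd = 2dV$, so the remaining $a_{0,j}$-generators are $dV^{s'}(\tilde\eta)$ rather than $V^{s'}(d\tilde\eta)$; and a complete account must also enumerate the $b_{0,j}[x]^j\,d\log[x]$-terms and the $V^s$- and $dV^s$-blocks with $s \geqslant 1$ and $j$ odd, as the paper does in its four explicit cases — though for these the preimage is either the element itself (when $F$ annihilates the coefficient) or a ladder of exactly the shape you describe.
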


\begin{proof}We first note that, as an immediate consequence of
Thms.~\ref{fundamentaltheorem} and~\ref{TRrelative}, the group
$\smash{\widetilde{\operatorname{TR}}_0(\mathbb{S}[x^{\pm 1}], 
I[x^{\pm1}];2)}$ is zero. Moreover, we showed in Thm.~\ref{TRrelative} 
that the Frobenius map $F \colon \operatorname{TR}_1^m(\mathbb{S},I;2)
\to \operatorname{TR}_1^{m-1}(\mathbb{S},I;2)$ is zero, and hence,
$$1 - F \colon
\widetilde{\operatorname{TR}}_1(\mathbb{S}[x^{\pm 1}], I[x^{\pm 1}];2) \to
\widetilde{\operatorname{TR}}_1(\mathbb{S}[x^{\pm 1}], I[x^{\pm
  1}];2),$$
is the identity map. This shows that the group
$\operatorname{Wh}_0^{\text{Top}}(S^1)$ is zero as stated. To prove
that $\smash{\operatorname{Wh}_1^{\text{Top}}(S^1)}$ is zero, it
remains to prove that the map
$$1 - F \colon
\widetilde{\operatorname{TR}}_2(\mathbb{S}[x^{\pm 1}], I[x^{\pm 1}];2) \to
\widetilde{\operatorname{TR}}_2(\mathbb{S}[x^{\pm 1}], I[x^{\pm 1}];2),$$
is surjective. So let $\omega = (\omega^{(n)})$ be an element on
the right-hand side. We find an element $\omega' = (\omega'{}^{(n)})$
such that $(R-F)(\omega') = \omega$. By Thm.~\ref{fundamentaltheorem},
we can write $\omega^{(n)}$ uniquely as a sum
$$\sum_{j \in \mathbb{Z} \smallsetminus \{0\}} \!\!
\big( a_{0,j}^{(n)} [x]_n^j + b_{0,j}^{(n)} [x]_n^j d\log [x]_n \big) + 
\sum_{ \substack{1 \leqslant s < n \\ j \in \mathbb{Z} \smallsetminus
  2\mathbb{Z}}} \!\! \big( V^s(a_{s,j}^{(n)} [x]_{n-s}^j) +
dV^s(b_{s,j}^{(n)} [x]_{n-s}^j) \big)$$
with $a_{s,j}^{(n)} \in \operatorname{TR}_2^{n-s}(\mathbb{S},I;2)$ and 
$b_{s,j}^{(n)} \in \operatorname{TR}_1^{n-s}(\mathbb{S},I;2)$. We first
consider the four types of summands separately.

First, if $\omega^{(n)} = V^s(a^{(n)}[x]^j)$ with $s \geqslant 1$, we
let $\omega' = \omega$. Then 
$$(R-F)(\omega'{}^{(n+1)}) = (R-F)(V^s(a^{(n+1)}[x]^j)) = V^s(a^{(n)}[x]^j),$$
since $FV = 2$ and $2a^{(n)} = 0$. We note that here $j$ may be any
integer.

Second, if $\omega^{(n)} = dV^s(b^{(n)}[x]^j)$, where $j$ and $s 
\geqslant 1$ are integers, we define
$$\omega'{}^{(n)} = 
- \! \sum_{s \leqslant r < n-1} dV^{r+1}(b^{(n-1-r+s)}[x]^j) 
- \! \sum_{s \leqslant r < n} V^r(\eta b^{(n-r+s)}[x]^j).$$
Then we have $R(\omega'{}^{(n+1)}) = \omega'{}^{(n)}$ and 
$$\begin{aligned}
(R-F)(\omega'{}^{(n+1)}) 
{} & = - \! \sum_{s \leqslant r < n-1} dV^{r+1}(b^{(n-1-r+s)}[x]^j)
- \! \sum_{s \leqslant r < n} V^r(\eta b^{(n-r+s)}[x]^j) \cr
{} & \hskip4mm + \! \sum_{s \leqslant r < n} dV^r(b^{(n-r+s)}[x]^j)
+ \! \sum_{s \leqslant r < n} V^r(\eta b^{(n-r+s)}[x]^j) \cr
{} & = dV^s(b^{(n)}[x]^j) \cr
\end{aligned}$$
as desired. 

Third, if $\omega^{(n)} = b^{(n)}[x]^jd\log[x]$, we let $\omega' =
\omega$. Then $(R-F)(\omega'{}^{(n+1)}) = \omega^{(n)}$, since
$F(b^{(n)}) = 0$.

Fourth, we consider the case $\omega^{(n)} = a^{(n)}[x]^j$. Then
$a^{(n)} \in \operatorname{TR}_2^n(\mathbb{S},I;2)$ and we showed in 
Thm.~\ref{TRrelative} that this group is an $\mathbb{F}_2$-vector
space with a basis given by the classes $V^s(\eta\tilde{\eta})$ and
$dV^s(\eta\tilde{\eta})$, where $0 \leqslant s < n$. If $a^{(n)} =
V^s(\eta\tilde{\eta})$ with $0 \leqslant s < n$, then we let $\omega'
= \omega$. Then $(R-F)(\omega'{}^{(n+1)}) = \omega^{(n)}$, since
$F(\tilde{\eta}) = 0$. Next, suppose that $a^{(n)} =
dV^s(\tilde{\eta})$ with $1 \leqslant s < n$. Then
$$dV^s(\tilde{\eta})[x]^j = dV^s(\tilde{\eta}[x]^{2^sj}) -
jV^s(\tilde{\eta})[x]^jd\log[x].$$
and we have already considered the two terms on the right-hand
side. Hence, also in this case, there exists $\omega'$ such that
$(R-F)(\omega'{}^{(n+1)}) = \omega^{(n)}$. Similarly, in the remaining
case $\omega^{(n)} = (d\tilde{\eta})[x]^j$, the calculation
$$\begin{aligned}
(R-F)(dV(\tilde{\eta}[x]^j)) & = dV(\tilde{\eta}[x]^j) -
d(\tilde{\eta}[x]^j) - \eta\tilde{\eta}[x]^j \cr
{} & = dV(\tilde{\eta}[x]^j) - (d\tilde{\eta})[x]^j +
j\tilde{\eta}[x]^j d\log[x] - \eta\tilde{\eta}[x]^j \cr
\end{aligned}$$
shows that there exists $\omega'$ such that $(R-F)(\omega'{}^{(n+1)})
= \omega^{(n)}$. Indeed, we have already considered
$dV(\tilde{\eta}[x]^j)$, $\tilde{\eta}[x]^jd\log[x]$, and
$\eta\tilde{\eta}[x]^j$.

Finally, we can write every element $\omega = (\omega^{(n)})$ of
$\widetilde{\operatorname{TR}}_2(\mathbb{S}[x]^{\pm1},I[x^{\pm1}];2)$
as a series $\omega = \sum_{i \in I} \omega_i$,
where each $\omega_i$ is an element of the one of the four
types considered above, and where, for every $n \geqslant 1$, all but
finitely many of the $\smash{ \omega_i^{(n)} }$ are zero. Now, for
every $i \in I$, we have constructed an element $\smash{ \omega_i' =
(\omega_i'{}^{(n)}) }$ such that $(R-F)(\omega_i') = \omega_i$. Moreover,
the element $\omega_i'$ has the property that, if $\smash{
  \omega_i^{(n)} = 0 }$, then also $\smash{ \omega_i'{}^{(n)} = 0}$.
It follows that, for all $n \geqslant 1$, all but finitely many of the
$\omega_i'{}^{(n)}$. Hence, the series $\omega' = \sum_{i \in
  I}\omega_i'$ defines an element with $(R-F)(\omega') = \omega$ as
desired.
\qed
\end{proof}

\begin{theorem}\label{whiteheadtwo}There is a canonical isomorphism
$$\operatorname{Wh}_2^{\operatorname{Top}}(S^1) \xrightarrow{\sim}
\bigoplus_{r \geqslant 1} \bigoplus_{j \in \mathbb{Z} \smallsetminus
  2\mathbb{Z}} \mathbb{Z}/2\mathbb{Z}.$$
\end{theorem}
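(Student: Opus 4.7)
The plan is to extract $\operatorname{Wh}_2^{\operatorname{Top}}(S^1)$ from the long-exact sequence
\[
\widetilde{\operatorname{TR}}_3 \xrightarrow{1-F} \widetilde{\operatorname{TR}}_3 \to \operatorname{Wh}_2^{\operatorname{Top}}(S^1) \to \widetilde{\operatorname{TR}}_2 \xrightarrow{1-F} \widetilde{\operatorname{TR}}_2
\]
appearing at the beginning of Sect.~\ref{algebrasection}. Since $1-F$ on $\widetilde{\operatorname{TR}}_2$ is surjective by Thm.~\ref{whiteheadzeroandone}, this yields the short-exact sequence
\[
0 \to \operatorname{coker}(1-F|_{\widetilde{\operatorname{TR}}_3}) \to \operatorname{Wh}_2^{\operatorname{Top}}(S^1) \to \ker(1-F|_{\widetilde{\operatorname{TR}}_2}) \to 0,
\]
and the theorem reduces to showing that the cokernel vanishes and that the kernel is canonically $\bigoplus_{r \geqslant 1} \bigoplus_{j \in \mathbb{Z} \smallsetminus 2\mathbb{Z}} \mathbb{Z}/2\mathbb{Z}$.

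I would unravel the kernel by applying Cor.~\ref{kernelR-F} together with the Frobenius formulas of Thm.~\ref{TRrelative}: $F \equiv 0$ on $\operatorname{TR}_1^n(\mathbb{S},I;2)$, $F(\eta\tilde{\eta}) = 0$, $F(d\tilde{\eta}) = d\tilde{\eta}$, and $F(dV^s(\tilde{\eta})) = dV^{s-1}(\tilde{\eta}) + V^{s-1}(\eta\tilde{\eta})$ for $s \geqslant 1$. These formulas show that an element $\omega = (\omega^{(n)}) \in \ker(1-F)$ is determined by scalars $c_{j,s} \in \mathbb{Z}/2\mathbb{Z}$, indexed by odd $j$ and $s \geqslant 0$, through $b^{(n)}_{0,j} = \sum_{s<n} c_{j,s} V^s(\tilde{\eta})$; the remaining coefficients are then forced by the kernel equations $a^{(n)}_{0,j} = (d+\eta)b^{(n)}_{0,j}$, $b^{(n)}_{s,j} = b^{(n-s)}_{0,j}$, $a^{(n)}_{s,j} = \eta b^{(n-s)}_{0,j}$ for $s \geqslant 1$ odd $j$, and $b^{(n-1)}_{0,2j} = 0$, $a^{(n-1)}_{0,2j} = F(db^{(n)}_{0,j})$ for the even-$j$ chain. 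A direct iteration shows that the chain $a^{(n)}_{0,2^k j'} = F^k(db^{(n+k)}_{0,j'})$ terminates---which is forced by the finite-support requirement on $\omega^{(n)}$---if and only if $(c_{j',s})_{s \geqslant 0}$ has finite support and satisfies $\sum_s c_{j',s} \equiv 0 \pmod 2$. The $\mathbb{F}_2$-subspace of such sequences has canonical basis $\{e_{r-1}+e_r\}_{r \geqslant 1}$, realized in the kernel by $b^{(n)}_{0,j'} = V^{r-1}(\tilde{\eta}) + V^r(\tilde{\eta})$, and since only finitely many odd $j'$ may contribute, this produces the desired canonical isomorphism.

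The main obstacle is proving surjectivity of $1-F$ on $\widetilde{\operatorname{TR}}_3$. Following the strategy of Thm.~\ref{whiteheadzeroandone}, I would decompose an arbitrary element via Thm.~\ref{fundamentaltheorem} using the generators $V^s(\tilde{\nu})$, $dV^s(\eta\tilde{\eta})$, and products with $\eta$ of $\operatorname{TR}_3^n(\mathbb{S},I;2)$ from Thm.~\ref{TRrelative}, then construct preimages of $1-F$ summand by summand. Terms of type $V^s(a[x]^j)$ and $dV^s(b[x]^j)$ with $s \geqslant 1$ are handled verbatim by the telescoping identity used in Thm.~\ref{whiteheadzeroandone}; the $[x]^j d\log[x]$-terms on the Frobenius-nilpotent generators, such as $\tilde{\nu}$ and its Verschiebungs for $s \geqslant 1$, reduce trivially since $F$ annihilates these classes. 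The delicate case consists of the $[x]^j$-terms on $dV^s(\eta\tilde{\eta})$ and related classes where $F$ acts non-trivially---for instance, $F(dV(\eta\tilde{\eta})) = d(\eta\tilde{\eta}) + \eta^2\tilde{\eta} = \eta^2\tilde{\eta}$ is a non-trivial multiple of $\tilde{\nu}$, coupling the two types of generators---where one must invoke the analogue $dV^s(\eta\tilde{\eta})[x]^j = dV^s(\eta\tilde{\eta}[x]^{2^sj}) - jV^s(\eta\tilde{\eta})[x]^j d\log[x]$ of the degree-$2$ reduction identity to trade such terms for ones already handled. Verifying that no residual obstruction escapes this case analysis, and in particular that the $F$-fixed contributions in degree~$3$ are fully absorbed, is the technical heart of the argument.
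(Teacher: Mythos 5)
Your proposal follows essentially the same route as the paper's proof: extract the short exact sequence
$$0 \to \operatorname{coker}\bigl(1-F \text{ on } \widetilde{\operatorname{TR}}_3\bigr) \to \operatorname{Wh}_2^{\operatorname{Top}}(S^1) \to \ker\bigl(1-F \text{ on } \widetilde{\operatorname{TR}}_2\bigr) \to 0,$$
show the cokernel vanishes via a term-by-term construction of preimages, and identify the kernel by unraveling the equations of Cor.~\ref{kernelR-F} using the Frobenius formulas from Thm.~\ref{TRrelative}. Your kernel computation is correct (including the constraint that $(c_{j,s})$ has finite support and vanishing total sum), and your Frobenius formulas $F(dV^s(\tilde\eta)) = dV^{s-1}(\tilde\eta) + V^{s-1}(\eta\tilde\eta)$, $F(d\tilde\eta)=d\tilde\eta$, $F(\eta\tilde\eta)=0$ all check out.

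Three minor remarks. First, the invocation ``Since $1-F$ on $\widetilde{\operatorname{TR}}_2$ is surjective by Thm.~\ref{whiteheadzeroandone}'' is unnecessary: the indicated short exact sequence follows from the long exact sequence without that input, by definition of kernel and cokernel. Second, your parametrization of the constrained subspace by the basis $\{e_{r-1}+e_r\}_{r\geqslant 1}$ differs from the paper's, which simply takes the coordinates $(c_{r,j})_{r\geqslant 1}$ as free (with $c_{0,j}$ determined as their sum); both yield canonical isomorphisms, though not the same one. Third, and most substantively, you stop short of carrying out the surjectivity verification in degree $3$, flagging it yourself as the ``technical heart.'' This is an acknowledged incompleteness rather than a wrong turn: the case list you sketch — Verschiebung and $dV$-terms with $s\geqslant 1$ by telescoping, $d\log[x]$-terms and $[x]^j$-terms on $V^s(\tilde\nu)$ via a geometric series terminating by $F^3 V^s(\tilde\nu)=0$ and $8\tilde\nu=0$, and $dV^s(\eta\tilde\eta)[x]^j$ via the identity $dV^s(b)[x]^j = dV^s(b[x]^{2^s j}) - jV^s(b)[x]^j d\log[x]$ — is exactly the decomposition the paper carries out, so completing the proof is a matter of bookkeeping, not of finding a new idea. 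Note also that ``$F$ annihilates these classes'' for $V^s(\tilde\nu)$, $s\geqslant 1$, is not literally true ($FV^s(\tilde\nu)=2V^{s-1}(\tilde\nu)$); what is true and suffices is $F^3 V^s(\tilde\nu)=0$, which you do cite as Frobenius-nilpotence.
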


\begin{proof}We first evaluate the kernel of the map $1-F$ in the
long-exact sequence at the beginning of the section. Let $\omega =
(\omega^{(n)})$ be an element of
$\widetilde{\operatorname{TR}}_2(\mathbb{S}[x^{\pm1}],I[x^{\pm1}];2)$. 
Then $\omega$ lies in the kernel of $1-F$ if and only if the
coefficients
$$\begin{aligned}
a_{s,j}^{(n)} & = a_{s,j}(\omega^{(n)}) \in
\operatorname{TR}_2^{n-s}(\mathbb{S},I;2) \cr
b_{s,j}^{(n)} & = b_{s,j}(\omega^{(n)}) \in
\operatorname{TR}_1^{n-s}(\mathbb{S},I;2) \cr
\end{aligned}$$
satisfy the equations of Cor.~\ref{kernelR-F}. In the case at hand, the
equations imply that the coefficients above are determined by the
coefficients $\smash{ b_{1,j}^{(n)} }$. Indeed, if we write $j$ as
$2^u j'$ with $j'$ odd, then we have
$$\begin{aligned}
a_{s,j}^{(n)} & = \begin{cases}
F^u(db_{1,j'}^{(n+1+u)} + \eta b_{1,j'}^{(n+1+u)}) & (s = 0) \cr
\eta b_{1,j}^{(n+1-s)} & (1 \leqslant s < n) \cr
\end{cases} \cr
\end{aligned}$$
$$\begin{aligned}
b_{s,j}^{(n)} & = \begin{cases}
0 & \hskip25.3mm (\text{$s = 0$ and $j$ even}) \cr
-jb_{1,j}^{(n+1)} & \hskip25.3mm (\text{$s = 0$ and $j$ odd}) \cr
b_{1,j}^{(n+1-s)} & \hskip25.3mm (1 \leqslant s < n). \cr
\end{cases} \cr
\end{aligned}$$
The coefficients $b_{1,j}^{(n)}$, however, are not unrestricted, since
for every $n \geqslant 1$, all but finitely many of the coefficients
$\smash{a_{s,j}^{(n)}}$ and $\smash{b_{s,j}^{(n)}}$ are
zero. We write 
$$b_{1,j}^{(n)} = \sum_{0 \leqslant r < n-1} c_{r,j}
V^r(\tilde{\eta})$$
and consider the coefficients
$$c_{r,j} = c_{r,j}(\omega) \in \mathbb{Z}/2\mathbb{Z}.$$
Since $R(b_{1,j}^{(n+1)}) = b_{1,j}^{(n)}$ and $R(\tilde{\eta}) =
\tilde{\eta}$, the coefficients $c_{r,j}$ depend only on the integers
$r \geqslant 0$ and $j \in \mathbb{Z} \smallsetminus \mathbb{Z}$ and
not on $n$. They determine and are determined by the coefficients
$\smash{ a_{s,j}^{(n)} }$ and $\smash{ b_{s,j}^{(n)} }$. 

The requirement that for all $n \geqslant 1$, all but finitely many of
the $\smash{b_{s,j}^{(n)}}$ be zero implies that there exists a finite
subset $I=I(\omega) \subset \mathbb{Z}/2\mathbb{Z}$ such that
$c_{r,j}$ is zero, unless $j \in I$. We fix $j \in I$ and consider
$\smash{ a_{0,2^uj}^{(n)} }$, with $u \geqslant 0$. We calculate
$$\begin{aligned}
a_{0,2^uj}^{(n)} & = F^u(db_j^{(n+1+u)} + \eta b_j^{(n+1+u)}) \cr
{} & = \sum_{0 \leqslant r < n + u} 
c_{r,j} F^u(dV^r(\tilde{\eta}) + V^r(\eta\tilde{\eta})) \cr
{} & = \sum_{0 \leqslant r < u} c_{r,j}
F^{u-r}(d\tilde{\eta} + \eta\tilde{\eta}) \hskip2mm +
\sum_{u \leqslant r < u + n} c_{r,j}(dV^{r-u}(\tilde{\eta}) +
V^{r-u}(\eta\tilde{\eta})) \cr
{} & = \sum_{0 \leqslant r < u}
c_{r,j} d\tilde{\eta} \hskip2mm + \sum_{u \leqslant r < u + n}
c_{r,j}(dV^{r-u}(\tilde{\eta}) + V^{r-u}(\eta\tilde{\eta})). \cr
\end{aligned}$$
Now, for all $n \geqslant 1$, there exists $N^{(n)} = N^{(n)}(\omega)$
such that for all $j \in I$ and all $\smash{ u \geqslant N^{(n)}}$,
the coefficient $\smash{a_{0,2^uj}^{(n)}}$ is zero. We assume that
$\smash{N^{(n)}}$ is chosen minimal. Since
$$R \colon \operatorname{TR}_2^n(\mathbb{S},I;2) \to
\operatorname{TR}_2^{n-1}(\mathbb{S},I;2)$$
is surjective and takes $a_{0,2^uj}^{(n)}$ to $a_{0,2^uj}^{(n-1)}$, we
have $N^{(n)} \geqslant N^{(n-1)}$. Considering the coefficients of
$d\tilde{\eta}$ and $\eta\tilde{\eta}$ in the sum above, we find that
for all $\smash{u \geqslant N^{(n)}}$,
$$\begin{aligned}
\sum_{0 \leqslant r < u + 1} c_{r,j} \hskip2mm & = \hskip2mm 0 \hskip8mm
(\text{coefficient of $d\tilde{\eta}$}) \cr
c_{u,j} \hskip2mm  & = \hskip2mm 0 \hskip8mm 
(\text{coefficients of $\eta\tilde{\eta}$}) \cr
\end{aligned}$$
But these equations are satisfied also for $\smash{ u \geqslant
N^{(n-1)}}$ which implies that we also have $N^{(n)} \leqslant
N^{(n-1)}$. We conclude that there exists an integer $N = N(\omega)
\geqslant 0$ independent of $n$ such that $c_{u,j} = 0$, for $\smash{u
\geqslant N}$, and that the coefficient $c_{0,j}$ is equal to the sum
of the coefficients $c_{r,j}$ with $r \geqslant 1$. Conversely,
suppose we are given coefficients $c_{r,j}$ all but finitely many of
which are zero. Then, for every $n \geqslant 1$, all but finitely many
of the corresponding coefficients $\smash{ a_{s,j}^{(n)} }$ and 
$\smash{ b_{s,j}^{(n)} }$ are zero. This shows that the map
$$\operatorname{ker}(
1 - F \colon
\widetilde{\operatorname{TR}}_2(\mathbb{S}[x^{\pm 1}], I[x^{\pm 1}];2) \to
\widetilde{\operatorname{TR}}_2(\mathbb{S}[x^{\pm 1}], I[x^{\pm 1}];2)) \to
\bigoplus_{r \geqslant 1} 
\bigoplus_{j \in \mathbb{Z}\smallsetminus\mathbb{Z}} 
\mathbb{Z}/2\mathbb{Z}$$
that to $\omega$ assigns $(c_{r,j}(\omega))$ is an isomorphism.

It remains to show that the map
$$1 - F \colon 
\widetilde{\operatorname{TR}}_3(\mathbb{S}[x^{\pm 1}], I[x^{\pm 1}];2) \to
\widetilde{\operatorname{TR}}_3(\mathbb{S}[x^{\pm 1}], I[x^{\pm 1}];2)$$
is surjective. Given the element $\omega = (\omega^{(n)})$ on the
right-hand side, we find an element 
$\smash{ \omega' = (\omega'{}^{(n)}) }$ on the left-hand side such
that $\smash{ (R-F)(\omega') = \omega }$. As in the proof of
Thm.~\ref{whiteheadzeroandone}, we first consider several cases
seperately.

First, if $\omega^{(n)} = dV^s(b^{(n)}[x]^j)$, where $j$ and $s
\geqslant 1$ are integers, we define
$$\omega'{}^{(n)} = 
- \!\! \sum_{s \leqslant r < n-1} dV^{r+1}(b^{(n-1-r+s)}[x]^j) 
- \!\! \sum_{s \leqslant r < n} V^r(\eta b^{(n-r+s)}[x]^j).$$
Then we find that $R(\omega'{}^{(n+1)}) = \omega'{}^{(n)}$ and
$(R-F)(\omega'{}^{(n+1)}) = \omega^{(n)}$ by calculations entirely
similar to the ones in the proof of Thm.~\ref{whiteheadzeroandone}.

Second, if $\omega^{(n)} = b^{(n)}[x]^jd\log[x]$, we consider three
cases separately. In the case $\omega^{(n)} =
V^s(\eta\tilde{\eta})[x]^jd\log[x]$ with $0 \leqslant s < n$, we let
$\omega' = \omega$. Then $(R-F)(\omega') = \omega$ since
$F(\tilde{\eta}) = 0$. In the case
$\omega^{(n)} = dV^s(\tilde{\eta})[x]^jd\log[x]$, where $1 \leqslant s
< n$, we note that
$\omega^{(n)} = dV^s(\tilde{\eta}[x]^{2^sj}d\log[x])$ and define
$$\omega'{}^{(n)} = 
- \!\! \sum_{s \leqslant r < n-1} dV^{r+1}(\tilde{\eta}[x]^{2^sj}d\log[x]) 
- \!\! \sum_{s \leqslant r < n} V^r(\eta\tilde{\eta}[x]^{2^sj}d\log[x]).$$
Then $R(\omega'{}^{(n+1)}) = \omega'{}^{(n)}$ and
$(R-F)(\omega'{}^{(n+1)}) = \omega^{(n)}$ as before. In the remaining
case $\omega^{(n)} = (d\tilde{\eta})[x]^jd\log[x]$, the calculation
$$(R-F)(dV(\tilde{\eta}[x]^jd\log[x])) = 
dV(\tilde{\eta}[x]^jd\log[x]) - (d\tilde{\eta})[x]^jd\log[x] -
\eta\tilde{\eta}[x]^jd\log[x]$$
shows that there exists $\omega'$ with $(R-F)(\omega') =
\omega$. Indeed, we have already considered
$dV(\tilde{\eta}[x]^jd\log[x])$ and $\eta\tilde{\eta}[x]^jd\log[x]$.

Third, if $\omega^{(n)} = a^{(n)}[x]^j$, we consider two cases
separately. In the first case, we have $\omega^{(n)} =
V^s(\tilde{\nu})[x]^j$ with $0  \leqslant s < n$ and define
$$\omega'{}^{(n)} =  V^s(\tilde{\nu})[x]^j + F(V^s(\tilde{\nu})[x]^j)
+ F^2(V^s(\tilde{\nu})[x]^j).$$
Then $R(\omega'{}^{(n+1)}) = \omega'{}^{(n)}$ and
$(R-F)(\omega'{}^{(n+1)}) = \omega^{(n)}$ because $F^3V^s(\tilde{\nu})
= 0$. In the second case, $\omega^{(n)} =
dV^s(\eta\tilde{\eta})[x]^j$, we calculate
$$dV^s(\eta\tilde{\eta})[x]^j = dV^s(\eta\tilde{\eta}[x]^{2^sj}) -
jV^s(\eta\tilde{\eta})[x]^jd\log[x].$$
Since we have already considered the two terms on the right-hand side,
it follows that there exists $\omega'$ with $(R-F)(\omega') = \omega$.

Finally, we consider $\omega^{(n)} = V^s(a^{(n)}[x]^j)$ with $1
\leqslant s < n$. For $s \geq 3$, we define
$$\omega'{}^{(n)} =  V^s(a^{(n)}[x]^j) + FV^s(a^{(n+1)}[x]^j)
+ F^2V^s(a^{(n+2)}[x]^j).$$
Then $R(\omega'^{(n+1)}) = \omega'{}^{(n)}$ and
$(R-F)(\omega'{}^{(n+1)}) = \omega^{(n)}$ since $8a^{(n+3)} = 0$. For
$s=0$ and $s=1$, the calculation
$$\begin{aligned}
(R-F)(V(a^{(n+1)}[x]^j)) & = V(a^{(n)}[x]^j) - 2a^{(n+1)}[x]^j \cr
(R-F)(V^2(a^{(n+1)}[x]^j) + FV^2(a^{(n+2)}[x]^j)) & = V^2(a^{(n)}[x]^j) -
4a^{(n+2)}[x]^j, \cr
\end{aligned}$$
shows that there exists $\omega'$ with $(R-F)(\omega') =
\omega$. Indeed, we have already considered $2a^{(n+1)}[x]^j$ and
$4a^{(n+2)}[x]^j$ above.

The elements $\omega'$ with $(R-F)(\omega') = \omega$ which we
constructed above have the property that, if $\smash{ \omega^{(n)} }$
is zero, then $\smash{ \omega'{}^{(n)} }$ is zero. It follows as in
the proof of Thm.~\ref{whiteheadzeroandone} that the map $1-F$ in
question is surjective. 
\qed
\end{proof}

\begin{theorem}\label{whiteheadthree}There is a canonical isomorphism
$$\operatorname{Wh}_3^{\operatorname{Top}}(S^1) \xrightarrow{\sim}
\bigoplus_{r \geqslant 0} \bigoplus_{j \in \mathbb{Z} \smallsetminus
  2\mathbb{Z}} \mathbb{Z}/2\mathbb{Z} \; \oplus \;
\bigoplus_{r \geqslant 1} \bigoplus_{j \in \mathbb{Z} \smallsetminus
  2\mathbb{Z}} \mathbb{Z}/2\mathbb{Z}.$$
\end{theorem}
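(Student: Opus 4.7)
The plan is to exploit the long-exact sequence from the beginning of the section at level $q = 3$. Since the proof of Theorem~\ref{whiteheadtwo} already established surjectivity of $1-F$ on $\widetilde{\operatorname{TR}}_3(\mathbb{S}[x^{\pm 1}], I[x^{\pm 1}];2)$, what remains from that sequence is a short-exact sequence
$$0 \to \operatorname{coker}\bigl(1-F \text{ on } \widetilde{\operatorname{TR}}_4\bigr) \to \operatorname{Wh}_3^{\operatorname{Top}}(S^1) \to \ker\bigl(1-F \text{ on } \widetilde{\operatorname{TR}}_3\bigr) \to 0,$$
and the strategy is to identify each outer term with one of the two direct sums in the statement and then show that the extension is trivial.

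First I would compute the kernel of $1-F$ on $\widetilde{\operatorname{TR}}_3$, following the pattern of Theorem~\ref{whiteheadtwo}. By Corollary~\ref{kernelR-F}, an element $\omega = (\omega^{(n)})$ in the kernel is determined by the families $\{a_{1,j}^{(n)} \in \operatorname{TR}_3^{n-1}(\mathbb{S},I;2)\}$ and $\{b_{1,j}^{(n)} \in \operatorname{TR}_2^{n-1}(\mathbb{S},I;2)\}$ with $j$ odd. Expanding these in the bases supplied by Theorem~\ref{TRrelative} and imposing the finiteness constraint (that, for each fixed $n$, only finitely many of the $a_{s,j}^{(n)}$ and $b_{s,j}^{(n)}$ are non-zero) should collapse the resulting streams of $\mathbb{Z}/2\mathbb{Z}$-coefficients into a single copy of $\bigoplus_{r \geqslant 1} \bigoplus_{j \in \mathbb{Z}\smallsetminus 2\mathbb{Z}} \mathbb{Z}/2\mathbb{Z}$. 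The key inputs are that $F$ annihilates $\tilde\eta$, $\tilde\nu$, $V^r(\tilde\eta)$, $V^r(\tilde\nu)$ for $r \geqslant 1$, that $d\tilde\eta$ is an infinite cycle under $F$, and that the recursion $a_{s,j}^{(n-1)} = 2 a_{s+1,j}^{(n)} + \eta b_{s+1,j}^{(n)}$ forces the $\tilde\nu$-stream inside $a_{1,j}^{(n)}$ to be pinned down by the $b$-stream (using $8\tilde\nu = 0$), exactly as in the $q = 2$ calculation.

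Next, I would compute the cokernel of $1-F$ on $\widetilde{\operatorname{TR}}_4$. Writing an arbitrary element as a sum of the four symbol types of Theorem~\ref{fundamentaltheorem}, with coefficients in $\operatorname{TR}_4^{n-s}(\mathbb{S},I;2)$ and $\operatorname{TR}_3^{n-s}(\mathbb{S},I;2)$, I would adapt the case analysis of Theorem~\ref{whiteheadzeroandone}: for each summand of the forms $a[x]^j$, $V^s(a[x]^j)$, and $dV^s(b[x]^j)$ with $s \geqslant 1$ I construct an explicit preimage under $R - F$, using that $F\tilde\nu = 0$, $F\tilde\eta = 0$, $FV = 2$, and the known torsion orders of the generators; the identity $dV^s(\tilde\nu)[x]^j = dV^s(\tilde\nu[x]^{2^sj}) - jV^s(\tilde\nu)[x]^j d\log[x]$ reduces $a$-type symbols containing a $d$ to $d\log$-type symbols plus already-handled summands. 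What should remain modulo the image of $1-F$ are precisely the classes of $dV^s(\tilde\nu)[x]^j d\log[x]$ for $s \geqslant 0$ and odd $j$, yielding the second summand $\bigoplus_{r \geqslant 0} \bigoplus_{j \in \mathbb{Z}\smallsetminus 2\mathbb{Z}} \mathbb{Z}/2\mathbb{Z}$.

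Finally, the extension is of elementary abelian $2$-groups and so automatically splits; a canonical section can be written down by lifting the cokernel generators via the boundary map in the long-exact sequence. The hard part will be the cokernel computation for $\widetilde{\operatorname{TR}}_4$: since Theorem~\ref{TRrelative} specifies only generators of $\operatorname{TR}_4^n(\mathbb{S},I;2)$ and leaves both $d\tilde\nu$ and $Fd\tilde\nu$ undetermined, the case analysis must rely solely on the unconditional relations --- most crucially the surjectivity of Connes' operator on $\operatorname{TR}_3^2(\mathbb{S},I;2) \to \operatorname{TR}_4^2(\mathbb{S},I;2)$ supplied by Lemma~\ref{connesoperatorsurjective}, the identities $R\tilde\nu = \tilde\nu$, $F\tilde\nu = 0$, and $FV = 2$ --- and must systematically avoid any appeal to the unknown value of $d\tilde\nu$.
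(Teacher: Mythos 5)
Your overall strategy -- extract the short exact sequence
$$0 \to \operatorname{coker}\bigl(1-F\text{ on }\widetilde{\operatorname{TR}}_4\bigr) \to \operatorname{Wh}_3^{\operatorname{Top}}(S^1) \to \ker\bigl(1-F\text{ on }\widetilde{\operatorname{TR}}_3\bigr) \to 0$$
from the long-exact sequence and compute the two outer terms -- is exactly the paper's, but your allocation of the two direct summands between the kernel and the cokernel is wrong, and the error is not cosmetic.

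The kernel of $1-F$ on $\widetilde{\operatorname{TR}}_3$ is the \emph{entire} group $\bigoplus_{r\geqslant 0}\bigoplus_{j\text{ odd}}\mathbb{Z}/2\mathbb{Z}\,\oplus\,\bigoplus_{r\geqslant 1}\bigoplus_{j\text{ odd}}\mathbb{Z}/2\mathbb{Z}$, not merely the $r\geqslant 1$ summand. When one runs the Corollary~\ref{kernelR-F} analysis, everything is determined by the stream $b_{1,j}^{(n)}\in\operatorname{TR}_2^{n-1}(\mathbb{S},I;2)$ (the $a_{1,j}^{(n)}$ collapse to $\eta b_{1,j}^{(n)}$ via the recursion and $8\cdot\operatorname{TR}_3 = 0$, $2\cdot\operatorname{TR}_2=0$). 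But by Theorem~\ref{TRrelative} the group $\operatorname{TR}_2^{n-1}(\mathbb{S},I;2)$ is $2(n-1)$-dimensional over $\mathbb{F}_2$, with basis $\{V^r(\eta\tilde\eta),\,dV^r(\tilde\eta)\}_{0\leqslant r<n-1}$, so there are \emph{two} sequences of coefficients $c_{r,j}$ and $c_{r,j}'$ rather than the single sequence you had in the $q=2$ analogue (where $\operatorname{TR}_1^{n-1}(\mathbb{S},I;2)$ is only $(n-1)$-dimensional). The finiteness constraint from the $b_{0,2^uj}^{(n)}$ pins down $c_{0,j}'=\sum_{r\geqslant 1}c_{r,j}'$ exactly as for $q=2$, while the $a_{0,2^uj}^{(n)}$-constraint touches only the $c_{r,j}$ stream and (after applying $i_*$ and Theorem~\ref{TRsphere} to get linear independence) forces only finiteness, no relation pinning down $c_{0,j}$. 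Hence the kernel already accounts for both summands in the statement.

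Correspondingly, the cokernel of $1-F$ on $\widetilde{\operatorname{TR}}_4$ must vanish, and the classes you propose as generators -- presumably $V^s(\tilde\nu)[x]^jd\log[x]$ with $s\geqslant 0$ and $j$ odd (note: $dV^s(\tilde\nu)[x]^jd\log[x]$ as written lives in degree $5$, not $4$) -- do lie in the image of $R-F$. Indeed, $\omega'^{(n)} = V^s(\tilde\nu)[x]^jd\log[x] + F(V^s(\tilde\nu)[x]^jd\log[x]) + F^2(V^s(\tilde\nu)[x]^jd\log[x])$ is a preimage, precisely because $F\tilde\nu = 0$ and $8\tilde\nu = 0$, so after three Frobenii the geometric series terminates. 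The remaining case analysis (the $dV^s(\tilde\nu)[x]^j$ symbols, handled via the identity $dV^s(\tilde\nu)[x]^j = dV^s(\tilde\nu[x]^{2^sj}) - jV^s(\tilde\nu)[x]^jd\log[x]$ for $s\geqslant 1$ and the relation $(R-F)(\eta\tilde\nu[x]^j) = \eta\tilde\nu[x]^j$ for $s=0$) indeed avoids any appeal to $d\tilde\nu$, as you anticipated, but the conclusion is surjectivity, so there is no extension to split and the extension argument in your last paragraph is vacuous. Fixing your proof requires re-running the kernel computation with the correct two-dimensional-per-level coefficient group and then simply proving surjectivity in degree $4$.
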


\begin{proof}We first show that the kernel of the map $1-F$ in the
long-exact sequence at the beginning of the section is canonically
isomorphic to the group that appears on the right-hand side in the
statement. So we let $\smash{ \omega = (\omega^{(n)}) }$ be an element
of $\smash{
  \widetilde{\operatorname{TR}}_3(\mathbb{S}[x^{\pm1}],I[x^{\pm1}];2)
}$ that lies in the kernel of $1-F$. The equations of
Cor.~\ref{kernelR-F} again show that the coefficients 
$$\begin{aligned}
a_{s,j}^{(n)} & = a_{s,j}(\omega^{(n)}) \in
\operatorname{TR}_3^{n-s}(\mathbb{S},I;p) \cr
b_{s,j}^{(n)} & = b_{s,j}(\omega^{(n)}) \in
\operatorname{TR}_2^{n-s}(\mathbb{S},I;p) \cr
\end{aligned}$$
are completely determined by the coefficients $b_{1,j}^{(n)}$. Indeed,
we find
$$\begin{aligned}
a_{s,j}^{(n)} & = \begin{cases}
F^u(db_{1,j'}^{(n+1+u)} + \eta b_{1,j'}^{(n+1+u)}) & (s = 0) \cr
\eta b_{1,j}^{(n+1-s)} & (1 \leqslant s < n) \cr
\end{cases} \cr
b_{s,j}^{(n)} & = \begin{cases}
jF^u(b_{1,j'}^{(n+1+u)}) & \hskip18.9mm (s = 0) \cr
b_{1,j}^{(n+1-s)} & \hskip18.9mm (1 \leqslant s < n). \cr
\end{cases} \cr
\end{aligned}$$
where $j = 2^u j'$ with $j'$ odd. For example, if $1 \leqslant s < n$, then 
$$\begin{aligned}
a_{s,j}^{(n)} & = 2a_{s+1,j}^{(n+1)} + \eta b_{s+1,j}^{(n+1)} 
= 2(2a_{s+2,j}^{(n+2)} + \eta  b_{s+2,j}^{(n+2)}) + \eta
b_{s+1,j}^{(n+1)} \cr
{} & = 2(2(2a_{s+3,j}^{(n+3)} + \eta b_{s+3,j}^{(n+3)}) + \eta
b_{s+2,j}^{(n+2)}) + \eta b_{s+1,j}^{(n+1)} \cr
{} & = \eta b_{s+1,j}^{(n+1)} = \eta b_{1,j}^{(n+1-s)} \cr
\end{aligned}$$
since $\operatorname{TR}_3^{n-s}(\mathbb{S},I;2)$ is annihilated by
$8$. We now write
$$b_{1,j}^{(n)}
= \sum_{0 \leqslant r < n-1} c_{r,j} V^r(\eta\tilde{\eta})
+ \sum_{0 \leqslant r < n-1} c_{r,j}' dV^r(\tilde{\eta}),$$
where the coefficients $c_{r,j} = c_{r,j}(\omega)$ and $c_{r,j}' =
c_{r,j}'(\omega)$ 
are independent on $n$. It is clear that the $c_{r,j}$ and $c_{r,j}'$
are non-zero for only finitely many values of the odd integer $j$. We
fix such a $j$ and evaluate the coefficients $\smash{ a_{0,2^uj}^{(n)} }$
and $\smash{ b_{0,2^uj}^{(n)} }$ for $u \geqslant 1$ as functions of
the coefficients $c_{r,j}$ and $c_{r,j}'$.
$$\begin{aligned}
a_{0,2^uj}^{(n)} & = F^u(db_j^{(n+1+u)} + \eta b_j^{(n+1+u)}) \cr
{} & = \hskip2mm \sum_{0 \leqslant r < n+u}
c_{r,j}(F^udV^r(\eta\tilde{\eta}) + \eta F^uV^r(\eta\tilde{\eta})) \cr
{} & \hskip6mm + \sum_{0 \leqslant r < n+u}
c_{r,j}'(F^uddV^r(\tilde{\eta}) + \eta F^udV^r(\tilde{\eta})) \cr
{} & = \hskip2mm \sum_{u \leqslant r < n+u}
c_{r,j}(dV^{r-u}(\eta\tilde{\eta}) + V^{r-u}(\eta^2\tilde{\eta})) \cr
b_{0,2^uj}^{(n)} & = jF^u(b_j^{(n+1+u)}) \cr
{} & = \hskip2mm \sum_{0 \leqslant r < n+u}
jc_{r,j}F^uV^r(\eta\tilde{\eta}) \hskip2mm +
\sum_{0 \leqslant r < n+u}
jc_{r,j}'F^udV^r(\tilde{\eta}) \cr
{} & = \hskip2mm \sum_{0 \leqslant r < u} jc_{r,j}'d\tilde{\eta}
\hskip2mm + \sum_{u \leqslant r < n+u}
jc_{r,j}'(dV^{r-u}(\tilde{\eta}) + V^{r-u}(\eta\tilde{\eta})), \cr
\end{aligned}$$
We claim that the elements $dV^{r-u}(\eta\tilde{\eta})$ and
$V^{r-u}(\eta^2\tilde{\eta})$ with $u < r < n+u$ form a linearly 
independent set. Indeed, the map
$$i_* \colon \operatorname{TR}_3^{n+u}(\mathbb{S},I;2) \to
\operatorname{TR}_3^{n+u}(\mathbb{S};2)$$
is injective by Prop.~\ref{evenzero}, and Lemma~\ref{imageofeta} shows
that
$$\begin{aligned}
i_*(dV^{r-u}(\eta\tilde{\eta}) & = dV^{r-u}(\eta^2) +
dV^{r-u+1}(\eta^2) \cr
i_*(V^{r-u}(\eta^2\tilde{\eta}) & = V^{r-u}(\eta^3) +
V^{r-u+1}(\eta^3) = 4V^{r-u}(\nu) + 4V^{r-u+1}(\nu). \cr
\end{aligned}$$
The claim then follows from Thm.~\ref{TRsphere}. We now conclude as in
the proof of Thm.~\ref{whiteheadtwo} that the map that to $\omega$
assigns $((c_{r,j}(\omega)),(c_{r,j}'(\omega)))$ defines an isomorphism
$$\begin{aligned}
\operatorname{ker} & (
1 - F \colon
\widetilde{\operatorname{TR}}_3(\mathbb{S}[x^{\pm 1}], I[x^{\pm1}];2) \to
\widetilde{\operatorname{TR}}_3(\mathbb{S}[x^{\pm 1}], I[x^{\pm1}];2)) \cr
{} & \xrightarrow{\sim} \bigoplus_{r \geqslant 0}
\bigoplus_{j \in \mathbb{Z}\smallsetminus\mathbb{Z}} 
\mathbb{Z}/2\mathbb{Z} \; \oplus \;
\bigoplus_{r \geqslant 1} 
\bigoplus_{j \in \mathbb{Z}\smallsetminus\mathbb{Z}} 
\mathbb{Z}/2\mathbb{Z} \cr
\end{aligned}$$

Finally, we argue as in the proof of Thm.~\ref{whiteheadzeroandone}
that the map
$$1 - F \colon
\widetilde{\operatorname{TR}}_4(\mathbb{S}[x^{\pm 1}], I[x^{\pm 1}];2) \to
\widetilde{\operatorname{TR}}_4(\mathbb{S}[x^{\pm 1}], I[x^{\pm 1}];2)$$
is surjective. Given $\omega = (\omega^{(n)})$ on the right-hand side,
we find $\omega' = (\omega'{}^{(n)})$ on the left-hand side with
$(R-F)(\omega') = \omega$. 

First, if $\omega^{(n)} = dV^s(b^{(n)}[x]^j)$, where $1 \leqslant s <
n$ and $j$ are integers, we define
$$\omega'{}^{(n)} = 
- \!\! \sum_{s \leqslant r < n-1} dV^{r+1}(b^{(n-1-r+s)}[x]^j) 
- \!\! \sum_{s \leqslant r < n} V^r(\eta b^{(n-r+s)}[x]^j).$$
Then we have $R(\omega'{}^{(n+1)}) = \omega'{}^{(n)}$ and
$(R-F)(\omega'{}^{(n+1)}) = \omega^{(n)}$ as desired.

Second, if $\omega^{(n)} = b^{(n)}[x]^jd\log[x]$, we consider two
cases separately. In the case $\omega^{(n)} =
dV^s(\eta\tilde{\eta})[x]^jd\log[x]$ with $1 \leqslant s < n$, we write
$\omega^{(n)} = dV^s(\eta\tilde{\eta}[x]^{2^sj})d\log[x]$ and
define 
$$\omega'{}^{(n)} = 
- \!\! \sum_{s \leqslant r < n-1} dV^{r+1}(\eta\tilde{\eta}[x]^jd\log[x]) 
- \!\! \sum_{s \leqslant r < n} V^r(\eta^2\tilde{\eta}[x]^jd\log[x]).$$
Then $R(\omega'{}^{(n+1)}) = \omega'{}^{(n)}$ and
$(R-F)(\omega'{}^{(n+1)}) = \omega^{(n)}$ as before. In the case where
$\omega^{(n)} = V^s(\tilde{\nu})[x]^jd\log[x]$ with $0 \leqslant s <
n$, we define
$$\omega'{}^{(n)} = 
V^s(\tilde{\nu})[x]^jd\log[x] +
F(V^s(\tilde{\nu})[x]^jd\log[x]) +
F^2(V^s(\tilde{\nu})[x]^jd\log[x]).$$
Then $R(\omega'{}^{(n+1)}) = \omega'{}^{(n)}$ and
$(R-F)(\omega'{}^{(n+1)}) = \omega^{(n)}$ since $8\tilde{\nu}$ and
$F\tilde{\nu}$ are zero.

Finally, we consider $\omega^{(n)} = dV^s(\tilde{\nu})[x]^j$ with
$0 \leqslant s < n$. For $s \geqslant 1$,
$$dV^s(\tilde{\nu})[x]^j = dV^s(\tilde{\nu}[x]^{2^sj}) -
jV^s(\tilde{\nu})[x]^jd\log[x]$$
and the two terms on the right-hand side were considered above. 
It follows that there exists $\omega'$ with $(R-F)(\omega') =
\omega$. For $s = 0$, we calculate 
$$\begin{aligned}
(R-F)(dV(\tilde{\nu}[x]^j)) & = dV(\tilde{\nu}[x]^j) -
(d\tilde{\nu})[x]^j + j\tilde{\nu}[x]^jd\log[x] -
\eta\tilde{\nu}[x]^j \cr
(R-F)(\eta\tilde{\nu}[x]^j) & = \eta\tilde{\nu}[x]^j. \cr
\end{aligned}$$
This shows that also for $\omega^{(n)} = (d\tilde{\nu})[x]^j$, there
exists $\omega'$ such that $(R-F)(\omega') = \omega$. Indeed, we have
already considered the remaining classes on the right-hand side. 

The elements $\omega'$ with $(R-F)(\omega') = \omega$ which we
constructed above have the property that, if $\smash{ \omega^{(n)} }$
is zero, then $\smash{ \omega'{}^{(n)} }$ is zero. It follows as in
the proof of Thm.~\ref{whiteheadzeroandone} that the map $1-F$ in
question is surjective. This completes the proof.
\qed
\end{proof}

\providecommand{\bysame}{\leavevmode\hbox to3em{\hrulefill}\thinspace}
\providecommand{\MR}{\relax\ifhmode\unskip\space\fi MR }
\providecommand{\MRhref}[2]{%
  \href{http://www.ams.org/mathscinet-getitem?mr=#1}{#2}
}
\providecommand{\href}[2]{#2}

\end{document}